\newcommand{\norm}[1]{\left\Vert #1\right\Vert}
\newcommand{\bb}[1]{\mathbb{#1}}
\newcommand{\ca}[1]{\mathcal{#1}}
\newcommand{\tr}[0]{\mathrm{tr}}
\newcommand{\Diag}[0]{\mathrm{Diag}}
\newcommand{\M}[0]{\mathcal{M}}
\newcommand{\tp}{^\top}
\newcommand{\A}{\ca{A}}
\newcommand{\Tx}{{\ca{T}_x}}
\newcommand{\Nx}{\ca{N}_x}
\newcommand{\xk}{{x_{k} }}
\newcommand{\yk}{{y_{k} }}
\newcommand{\xkp}{{x_{k+1} }}
\newcommand{\dk}{{d_{k} }}
\newcommand{\ykp}{{y_{k+1} }}
\newcommand{\Jc}{{{J}_c}}
\newcommand{\Ja}{{{J}_{\A}}}
\newcommand{\DJa}{ {\ca{D}_{{J}_\A}} }
\newcommand{\DJc}{{\ca{D}_{{J}_c}}}
\newcommand{\grad}{{\mathit{grad}\,}}
\newcommand{\hess}{{\mathit{hess}\,}}
\newcommand{\Lf}{ {M_{x,f}} }
\newcommand{\Lg}{ {L_{x,g}} }
\newcommand{\Lsc}{ \sigma_{x,c} }
\newcommand{\tLsc}{ {\tilde{\sigma}_{x_0,c}} }
\newcommand{\Mc}{ {M_{x,c}} }
\newcommand{\tMc}{ {\tilde{M}_{x_0,c}} }
\newcommand{\Ma}{ M_{x,A} } 
\newcommand{\tMa}{ {\tilde{M}_{x_0,A}} } 
\newcommand{\Lc}{ {L_{x,c}} }
\newcommand{\tLc}{ {\tilde{L}_{x_0,c}} }
\newcommand{\La}{ {L_{x,A}} }
\newcommand{\tLa}{ {\tilde{L}_{x_0,A}} }
\newcommand{\Lac}{ {L_{x, b}} }
\newcommand{\Omegax}[1]{ {\Omega_{#1}} }
\newcommand{\HOmegax}[1]{ \hat{\Omega}_{#1} }
\newcommand{\BOmegax}[1]{ {\bar{\Omega}_{#1}} }
\newcommand{\y}{{y}}
\newtheorem{theo}{Theorem}[section]
\newtheorem{lem}[theo]{Lemma}
\newtheorem{prop}[theo]{Proposition}
\newtheorem{coro}[theo]{Corollary}
\newtheorem{defin}[theo]{Definition}
\newtheorem{rmk}[theo]{Remark}
\newtheorem{assumpt}[theo]{Assumption}
\DeclareMathOperator*{\argmin}{arg\,min}
\numberwithin{equation}{section}
\title{Dissolving Constraints for Riemannian Optimization}
\author{Nachuan Xiao
	\thanks{The Institute of Operations Research and Analytics, National University of Singapore, Singapore. (xnc@lsec.cc.ac.cn). The research of this author is supported by the Ministry of Education, Singapore, under its Academic Research Fund Tier 3 grant call (MOE-2019-T3-1-010).} ~
	Xin Liu\thanks{State Key Laboratory of Scientific and Engineering Computing, Academy of Mathematics and Systems Science, Chinese Academy of Sciences, and University of Chinese Academy of Sciences, China (liuxin@lsec.cc.ac.cn). Research is supported in part by the National Natural Science Foundation of China (No. 12125108, 11971466, 11991021), Key Research Program of Frontier Sciences, Chinese Academy of Sciences (No. ZDBS-LY-7022).} ~
	and Kim-Chuan Toh \thanks{Department of Mathematics, and Institute of Operations Research and Analytics, National University of Singapore, Singapore 119076 (mattohkc@nus.edu.sg). The research of this author is supported by the Ministry of Education, Singapore, under its Academic Research Fund Tier 3 grant call (MOE-2019-T3-1-010).}}
\begin{document}
	\maketitle
	
	\begin{abstract}
		In this paper, we consider optimization problems over closed embedded submanifolds of $\bb{R}^n$, which are defined by the constraints $c(x) = 0$. We propose a class of constraint dissolving approaches for these Riemannian optimization problems. In these proposed approaches, solving a Riemannian optimization problem is transferred into  the unconstrained minimization of a constraint dissolving function named \ref{Prob_Pen}. 
	Different from existing exact penalty functions, the exact gradient and Hessian of \ref{Prob_Pen} are easy to compute. 
	We study the theoretical properties of \ref{Prob_Pen} and prove that the original problem and \ref{Prob_Pen} have the same first-order and second-order stationary points, local minimizers, and {\L}ojasiewicz exponents in a neighborhood of the feasible region. 
Remarkably, the convergence properties of our proposed constraint dissolving approaches can be directly inherited from 
the existing rich results in unconstrained optimization. Therefore, the proposed constraint dissolving approaches build up short cuts from unconstrained optimization to Riemannian optimization. Several illustrative examples further demonstrate the potential of our proposed constraint dissolving approaches.
	\end{abstract}

	\section{Introduction}
	\subsection{Problem description}
	In this paper, we consider the following constrained optimization problem
	\begin{equation}
		\label{Prob_Ori}
		\tag{OCP}
		\begin{aligned}
			\min_{x \in \bb{R}^n} \quad & f(x)\\
			\text{s.t.} \quad & c(x) = 0.
		\end{aligned}
	\end{equation}
	We denote the feasible region of \ref{Prob_Ori} by $\M := \{ x \in \bb{R}^n: c(x) = 0 \}$. In addition,
	the objective function  $f: \bb{R}^n \to \bb{R}$   and constraint mapping 
	$c: \bb{R}^n \to \bb{R}^p$ of \ref{Prob_Ori} satisfy the following assumptions.
	\begin{assumpt}{\bf Blank assumptions}
		\label{Assumption_1}
		
		\begin{enumerate}
			\item $\nabla f(x)$ 
			is locally Lipschitz continuous in $\bb{R}^n$;
			\item The transposed Jacobian of $c$, denoted as $\Jc(x) \in \bb{R}^{n \times p}$, 
			is locally Lipschitz continuous in $\bb{R}^{n}$;
			\item The linear independence constraint qualification (LICQ) holds for any $x \in \M$, i.e. 
			$\Jc(x) \in \bb{R}^{n\times p}$ has full column rank for any $x \in \M$. 
		\end{enumerate}
	\end{assumpt}
	
	{When $c$ is smooth in $\bb{R}^n$, the set $\M$ is a closed Riemannian submanifold embedded in $\bb{R}^n$. 
		Thus \ref{Prob_Ori} can be regarded as a smooth optimization problem over a class of embedded submanifolds of the vector space $\bb{R}^n$, where the Riemannian metric is fixed to be the Euclidean metric. 
		In fact, \ref{Prob_Ori} satisfying Assumption \ref{Assumption_1} covers a wide variety of practically interesting smooth optimization problems over closed Riemannian manifolds. Interested readers can refer to the books \cite{Absil2009optimization,boumal2020introduction} and a recent survey paper \cite{hu2020brief} for instances. 
	
	\subsection{Existing approaches}
	Due to the diffeomorphisms between the Euclidean space and the Riemannian manifold, various {\it unconstrained optimization approaches} (i.e., approaches for solving unconstrained nonconvex optimization) can be transferred to their corresponding {\it Riemannian optimization approaches} (i.e., the approaches for Riemannian optimization). In practice, \cite{Absil2009optimization} provides several well-recognized frameworks based on two basic materials in differential geometry: geodesics and parallel transports. The geodesics generalize the concept of straight lines from Euclidean spaces to Riemannian manifolds, but may be expensive to compute in most cases. To this end, \cite{Absil2009optimization} provides the concept of retractions as relaxations to geodesics, which makes it more affordable to update the iterates on a certain Riemannian manifold at the cost of 
	introducing approximation errors. 
	Besides,  parallel transports are  mappings that move a tangent vector from one tangent space to another under certain rules. Computing parallel transports is essential in computing the difference of two vectors from different tangent spaces. Specifically, computing parallel transports is necessary for algorithms that utilize  information in the past iterates to construct searching direction (e.g., quasi-Newton methods, nonlinear conjugate gradient methods, momentum accelerated methods).  For various Riemannian manifolds, computing the parallel transports amounts to solving differential equations,  which is generally unaffordable in practice \cite{Absil2009optimization}. To alleviate the computational cost, \cite{Absil2009optimization} proposes the concept of vector transports as approximations to parallel transports. As mentioned in \cite{qi2010riemannian}, computing vector transports is usually cheaper than parallel transports. With retractions and vector transports, many unconstrained optimization approaches have been extended to their Riemannian versions, including Riemannian gradient descent with line-search \cite{abrudan2008steepest,Absil2009optimization,wen2013feasible,wang2020multipliers}, Riemannian conjugate gradient methods \cite{abrudan2009conjugate,sato2016dai}, Riemannian accelerated gradient methods \cite{zhang2018r,zhang2018towards,siegel2019accelerated,criscitiello2020accelerated}, and Riemannian adaptive gradient methods \cite{becigneul2018riemannian}: see \cite{Absil2009optimization,boumal2020introduction} for instances.

	In recent years, there emerge an increasing number of approaches for solving unconstrained optimization problems,
	which have superior convergence properties, marvelous numerical behaviors, or both. 
	However, transferring an unconstrained optimization approach to its Riemannian versions requires some basic geometrical materials of the Riemannian manifold, including computing Riemannian gradients, retractions, and vector transports \cite{Absil2009optimization}. Determining those geometrical materials can be challenging for various Riemannian manifolds, see \cite{edelman1998geometry,nickel2018learning,gao2021riemannian} for instances. 
	Based on those geometrical materials,  transferring an unconstrained optimization approach to its Riemannian versions requires profound modifications, including replacing the computation of differentials by Riemannian differentials, introducing retractions to keep the iterates feasible, and employing vector transports to move vectors on the Riemannian manifold. As a result,  it is challenging to keep Riemannian optimization approaches updated with the advances in unconstrained nonconvex optimization.

		Furthermore, the convergence properties of many practically useful Riemannian optimization approaches cannot directly follow from existing results for unconstrained optimization. As shown in \cite{Absil2009optimization,boumal2020introduction}, the convergence properties of these Riemannian optimization approaches need to be carefully revisited when retractions and vector transports are employed. Extending the existing unconstrained optimization approaches to their Riemannian manifold versions and retaining the convergence guarantee
		are nontrivial and sometimes intractable. Therefore, it is quite natural to ask the following question.
	\begin{quote}
		\it
		Could unconstrained optimization approaches, together with their convergence properties, have a straightforward implementation for the constrained
		optimization problem \ref{Prob_Ori}?
	\end{quote}
	This question drives us to propose {\it constraint dissolving} approaches for \ref{Prob_Ori}, i.e., transferring \ref{Prob_Ori} into an unconstrained optimization problem  
	while keeping stationary points unchanged. Therefore, constraint dissolving approaches enable direct implementation of unconstrained optimization approaches to solve \ref{Prob_Ori}, while the convergence properties of those unconstrained optimization approaches
	are retained simultaneously.

	We should mention that there are two classes of existing approaches attempting to achieve a similar goal but using completely different philosophies.
	One of them is the well-known $\ell_1$  penalty function methods. The $\ell_1$  penalty function is known as 
	an exact penalty function. However, its nonsmooth penalty term, with the nonconvex manifold constraints inside, 
	usually leads to difficulties in developing efficient unconstrained optimization approaches \cite{nocedal2006numerical}. 
	The other class of methods are based on the augmented Lagrange penalty function \cite{hestenes1952methods,powell1969method}.
	The Lagrangian penalty function is an exact penalty function for \ref{Prob_Ori} when the Lagrange multipliers $\lambda$
	take their optimal values, which are certainly unknown in advance. Therefore,  classical augmented Lagrange methods
	need to solve the unconstrained penalty function subproblem with a fixed $\lambda$ and then update the multipliers in each iteration.
	These two hierarchical approaches are not as efficient as the existing Riemannian optimization approaches
	for solving \ref{Prob_Ori}.
	In particular, \cite{fletcher1970class} proposes a class of exact penalty functions named Fletcher's penalty function below
	\begin{equation}
		\label{Penalty_function_Fletcher} 
		\phi(x) := f(x) - u(x)\tp c(x) + \frac{\beta}{2}\norm{c(x)}^2.
	\end{equation}
	Here $u(x)$ is defined by the following linear least squares problem
	\begin{equation}
		\label{Fletcher_linear_sysytem}
		u(x) := \mathop{\arg\min}_{y \in \bb{R}^{p}}~ \frac{1}{2}\norm{ \sum_{i=1}^p y_i\nabla c_i(x) - \nabla f(x)  }^2. 
	\end{equation}

	Fletcher's penalty function and its variants \cite{di1986exact,zavala2014scalable,estrin2020implementing}  involve the first-order derivative of the original objective function $f$.
	Therefore, their differentiability depend on 
	the second-order differentiability of $f$. Moreover, 
	calculating the derivatives of these penalty functions requires
	the second-order derivative of $f$, which are not always available in
	practice. As a result, 
	existing approaches based on Fletcher's penalty function, such as 
	approximated steepest descent methods \cite{estrin2020implementing}, approximated Newton methods \cite{toint1981towards,steihaug1983conjugate,zavala2014scalable}, and approximated quasi-Newton methods \cite{estrin2020implementing}, are usually combined with certain approximation strategies to estimate higher-order derivatives. 
	Hence, various existing unconstrained optimization approaches are not compatible with the Fletcher's penalty function framework.
	
	For optimization problems on the Stiefel manifold, i.e. $\M= \ca{S}_{m,s}:= \{X \in \bb{R}^{m\times s}: X\tp X = I_s \}$, \cite{xiao2020class} presents an exact penalty model named PenC based on the explicit expression of the multipliers \cite{gao2019parallelizable}, which further yields efficient infeasible algorithms \cite{xiao2020class,xiao2020l21,hu2020anefficiency,xiao2021penalty}. However, PenC involves $\nabla f$ in its objective function as well. Therefore, those aforementioned limitations of Fletcher's penalty function approaches still remain unsolved.
	
	\subsection{Constraint dissolving function}
	Very recently, for optimization problems on the Stiefel manifold,  \cite{xiao2021solving} proves that under mild conditions, all the stationary 
	points of the following smooth penalty function
	are either its strict saddle points or are the first-order stationary points of the original problems:
	\begin{equation}
		\label{Eq_ExPen}
		\tag{ExPen}
		\psi(X) := f\left( X\left( \frac{3}{2} I_s - \frac{1}{2}X\tp X \right)\right) + \frac{\beta}{4} \norm{X\tp X - I_s}_{F}^2.
	\end{equation}
	
	As a result, various algorithms designed for unconstrained optimization can be directly applied to optimization problems on the Stiefel manifold, while the convergence properties are straightforwardly retained. 
	
	The constraint dissolving approaches for Riemannian optimization proposed in this paper is motivated by \eqref{Eq_ExPen}. To this end, we first introduce the following constraint dissolving operator $\A: \bb{R}^n \to \bb{R}^n$, which is a smooth mapping independent of $f$ and satisfies the following assumptions.
	\begin{assumpt}{\bf Blanket assumptions on $\A$}
		\label{Assumption_2}
		
		\begin{itemize}
			\item $\ca{A}$ is locally Lipschitz smooth in $\bb{R}^n$;
			\item $\A(x) = x$ holds for any $x \in \M$;
			\item The Jacobian of $c(\A(x))$ equals to $0$ for any $x \in \M$. That is, $\Ja(x) \Jc(x) = 0$  holds for any $x \in \M$ (notice that $\A(x) = x$ holds for any $x \in \M$), where $\Ja(x) \in \bb{R}^{n\times n}$ is the transposed Jacobian of $\A$ at $x$.
		\end{itemize}
	\end{assumpt}
	
	With the constraint dissolving operator, we propose the constraint dissolving function (\ref{Prob_Pen}) for \ref{Prob_Ori}:
	\begin{equation}
		\tag{CDF}
		\label{Prob_Pen}
		h(x) := f(\A(x)) + \frac{\beta}{2} \norm{c(x)}^2. 
	\end{equation} 
	Clearly, the first requirement in Assumption \ref{Assumption_2}  guarantees the smoothness of $h(x)$. Meanwhile the second requirement ensures that $f(x) = h(x)$ for any $x \in \M$. Finally, the last requirement implies that the first-order derivative of $c(\A(x))$
	vanishes at any feasible $x$. As a result, we can further conclude that $c(\A(x)) = \ca{O}(\norm{c(x)}^2)$ when $||c(x)||$ is sufficiently small, whose rigorous proof is
	presented later. The practical choices of $\A$ are introduced 
	in Section \ref{Section_Implementation}.

	\subsection{Contribution}
	In this paper, we propose a class of constraint dissolving approaches, which transfer \ref{Prob_Ori} into minimizing the corresponding constraint dissolving function (\ref{Prob_Pen}) in $\bb{R}^n$.
	We prove that \ref{Prob_Ori} and \ref{Prob_Pen} have the same first-order stationary points, second-order stationary points, and local minimizers in a neighborhood of $\M$. In addition, we show that  \ref{Prob_Pen} has the same {\L}ojasiewicz exponent as \ref{Prob_Ori} over $\M$. 
	Furthermore, we show that the exact gradient and Hessian of \ref{Prob_Pen} can be easily obtained.

	As \ref{Prob_Pen} requires a constraint dissolving operator $\A$ satisfying Assumption \ref{Assumption_2}, we present representative formulations of $\A$ for many well-known Riemannian manifolds. Moreover, we discuss how to choose $\A$ for general cases and demonstrate that the general formulation does not involve any information on the objective function, and hence \ref{Prob_Pen} is different from the Fletcher's penalty function. 
	More importantly, constructing \ref{Prob_Pen} is completely independent of any geometrical material of $\M$.  Since $\nabla h(x)$ is not necessarily restricted to the tangent space of $\M$ when $x$ is feasible,  \ref{Prob_Pen} waives all the calculations of geometrical materials of $\M$, including computing Riemannian gradients, retractions, and vector transports on $\M$.
	Therefore, we can develop various constraint dissolving approaches to solve optimization problems over a broad class of Riemannian manifolds, without prior knowledge of their geometrical properties.

	The convergence properties, including the global convergence and iteration complexity of applying any unconstrained optimization approach to \ref{Prob_Pen} can be guaranteed by a unified
	framework.
	We also present a representative example to demonstrate how to adopt \ref{Prob_Pen}
	and invoke the theoretical framework. These examples
	further highlight the significant advantages and great potentials of \ref{Prob_Pen}.

	\subsection{Organization}
	The rest of this paper is arranged as follows. In Section 2, we present some notations, definitions, and constants that are necessary for concise narrative in later parts of the paper. We establish the theoretical properties of \ref{Prob_Pen} and illustrate how  our proposed constraint dissolving approaches inherit the convergence properties from the implemented unconstrained approaches in Section 3. The proofs for the  theoretical properties of \ref{Prob_Pen} are presented in the appendix. In Section 4, we discuss how to choose the constraint dissolving operator $\A$ for \ref{Prob_Pen}.  We conclude the paper in the last section.

	\section{Notations, definitions and constants}
	\subsection{Notations}
	Let $\mathrm{range}(A)$ be the subspace spanned by the column vectors of matrix $A$, and $\norm{\cdot}$ represents the $\ell_2$-norm of a vector or an operator.
	The notations $\mathrm{diag}(A)$ and $\Diag(x)$
	stand for the vector formed by the diagonal entries of a matrix $A$,
	and the diagonal matrix with the entries of $x\in\bb{R}^n$ as its diagonal, respectively. 
	We denote the smallest and largest eigenvalues of $A$ by $\lambda_{\mathrm{min}}(A)$ and $\lambda_{\max}(A)$, respectively. Besides, $\sigma_{\min}(A)$ refers to the smallest singular value of matrix $A$. Furthermore, for any matrix $A \in \bb{R}^{n\times p}$, 
	the pseudo-inverse of $A$ is denoted by $A^\dagger \in \bb{R}^{p\times n}$, which satisfies $AA^\dagger A = A$, $A^\dagger AA^\dagger = A^\dagger$, and both $A^{\dagger} A$ and $A A^{\dagger}$ are symmetric \cite{GolubMatrix}. 
	
	In this paper, the Riemannian metric for $\M$ is chosen as the Euclidean metric in $\bb{R}^n$. For any $x\in\M$, we denote  
	$\Tx:= \{ d \in \bb{R}^n: d\tp \Jc(x) = 0  \} = \mathrm{Null}( \Jc(x)\tp )$
	and 
	$\Nx := \{d \in \bb{R}^{n}:  d\tp u = 0, ~  \forall u \in \Tx \} = \mathrm{range}(\Jc(x))$ as the tangent and normal spaces of $\M$ at $x$,
	respectively. Additionally, for any $x \in \M$, we denote the Riemannian gradient and Riemannian Hessian of $f$ at $x$ as $\grad f(x)$ and $\hess f(x)$, respectively.
	
	For any $x \in \bb{R}^n$, we define the projection from $x \in \bb{R}^n$ to $\M$ as 
	\begin{equation*}
		\mathrm{proj}(x, \M) := \mathop{\arg\min}_{y \in \M} ~ \norm{x-y}. 
	\end{equation*} 
	It is worth mentioning that the optimality condition of the above problem leads to the fact that $x - w \in \mathrm{range}(\Jc(w))$ for any $w \in \mathrm{proj}(x, \M)$.
	Furthermore, $\mathrm{dist}(x, \M)$ refers to the distance between $x$ and $\M$, i.e. 
		\begin{equation*}
			\mathrm{dist}(x, \M) := \mathop{\min}_{y \in \M} ~ \norm{x-y}.
		\end{equation*}

	The transposed Jacobian of the mapping $\A$ is denoted as $\Ja(x) \in \bb{R}^{n\times n}$. Recall the definition of $\Jc(x)$, and let $c_i$ and $\A_{i}$ be the $i$-th coordinate of the mapping $c$ and $\A$ respectively, then $\Jc$ and $\Ja$ can be expressed by
	\begin{equation*}
		\scriptsize
		\Jc(x) := \left[  \begin{smallmatrix}
			\frac{\partial c_1(x_1)}{\partial x_1} & \cdots & \frac{\partial c_p(x_1)}{\partial x_1} \\
			\vdots & \ddots & \vdots \\
			\frac{\partial c_1(x_n)}{\partial x_n} & \cdots & \frac{\partial c_p(x_n)}{\partial x_n} \\
		\end{smallmatrix}\right] \in \bb{R}^{n\times p},
		\quad \text{and} ~   
		\Ja(x) := \left[  \begin{smallmatrix}
			\frac{\partial \A_1(x_1)}{\partial x_1} & \cdots & \frac{\partial \A_n(x_1)}{\partial x_1} \\
			\vdots & \ddots & \vdots \\
			\frac{\partial \A_1(x_n)}{\partial x_n} & \cdots & \frac{\partial \A_n(x_n)}{\partial x_n} \\
		\end{smallmatrix}\right] \in \bb{R}^{n\times n}. 
	\end{equation*}
	Besides, $\DJa(x): d \mapsto \DJa(x)[d]$ denotes the second-order derivative of the mapping $\A$, which can be regarded as a linear mapping from $\bb{R}^n$ to $\bb{R}^{n\times n}$ and satisfies $\DJa(x)[d] = \lim_{t \to 0} \frac{1}{t}(\Ja(x + td) - \Ja(x) ) \in \bb{R}^{n\times n}$. 
	Similarly,  $\DJc(x)$ refers to the second-order derivative of the mapping $c$, which satisfies $\DJc(x)[d] = \lim_{t \to 0} \frac{1}{t}(\Jc(x + td) - \Jc(x) ) \in \bb{R}^{n\times p}$. Additionally,  we set $$\A^{k}(x) := \underbrace{\A(\A(\cdots\A(x)\cdots))}_{k \text{ times}},$$
	for $k \geq 1$, and define $\A^0(x) := x$, $\A^{\infty}(x):= \lim\limits_{k \to +\infty} \A^k(x)$. Furthermore, we denote $g(x) := f(\A(x))$ and use $\nabla f(\A(x))$ to denote $\nabla f(z)\large|_{z = \A(x)}$ in the rest of this paper.

	\subsection{Definitions}
	We first state the first-order optimality condition of \ref{Prob_Ori} as follows.
	\begin{defin}[\cite{nocedal2006numerical}]\label{Defin_FOSP}
		Given $x \in \bb{R}^n$, we say $x$ is a first-order stationary point of \ref{Prob_Ori} if there exists $\tilde{\lambda} \in \bb{R}^p$ that satisfies  
		\begin{equation}
			\label{Eq_Defin_FOSP}
			\left\{\begin{aligned}
				\nabla f(x) -  \sum_{i=1}^p\tilde{\lambda}_i \nabla c_i(x) &= 0,\\
				c(x) &= 0.
			\end{aligned}\right.
		\end{equation}
	\end{defin}

	For any given $x \in \M$, 
	we define $\lambda(x)$ as $$\lambda(x) := \Jc(x)^\dagger \nabla f(x)\in\argmin\limits_{\lambda\in\bb{R}^p} \norm{\nabla f(x) - \Jc(x) \lambda},$$
	where $\Jc(x)^\dagger =  \left(\Jc(x)\tp \Jc(x)\right)^{-1}\Jc(x)\tp $ since $\Jc(x)$ has full column rank when $x \in \M$.   
	Then it can be easily verified that  
	\begin{equation}\label{add:2}
		\nabla f(x) = \Jc(x) \lambda(x), 
	\end{equation}
	whenever $x$ is a first-order stationary point of \ref{Prob_Ori}. 
	
	\begin{defin}\label{Defin_SOSP}
		Given $x \in \bb{R}^n$, we say $x$ is a second-order stationary point of \ref{Prob_Ori} if $x$ is a first-order stationary point of \ref{Prob_Ori} and for any $d \in \Tx$, it holds that 
		\begin{equation}
			d\tp \left(\nabla^2 f(x) - \sum_{i=1}^p \lambda_i(x) \nabla^2 c_i(x)\right)d \geq 0.
		\end{equation}
	\end{defin}

		As the Riemannian metric on $\M$ is fixed as the Euclidean metric in $\bb{R}^n$, the following proposition presents the closed-form expressions for the Riemannian gradient $\grad f(x)$ and Riemannian Hessian $\hess f(x)$ for any $x \in \M$. 
		\begin{prop}
			\label{Prop_FOSP_Rie}
			Given $x \in \M$,  the Riemannian gradient of $f$ at $x$ can be expressed as  
			\begin{equation}\label{add:4}
				\grad f(x) = \nabla f(x) - \Jc(x)\lambda(x).
			\end{equation}
			Moreover, $\hess f(x)$ can be expressed as the following self-adjoint linear map $\hess f(x) : \Tx \to \Tx$ such that
			\begin{equation*}
				d\tp \hess f(x) d =  d\tp \left(\nabla^2 f(x) - \sum_{i=1}^p \lambda_i(x) \nabla^2 c_i(x)\right)d, \qquad \text{for any } d \in \Tx. 
			\end{equation*}
		\end{prop}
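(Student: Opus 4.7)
The plan is to exploit the fact that the Riemannian metric on $\M$ is the Euclidean metric inherited from $\bb{R}^n$, so that both $\grad f(x)$ and $\hess f(x)$ admit elementary ambient-space descriptions via the orthogonal projection onto $\Tx$.

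For the gradient formula, I would first recall that for embedded submanifolds with the induced metric, the Riemannian gradient at $x \in \M$ is the orthogonal projection of $\nabla f(x)$ onto $\Tx$. Since $\Tx = \mathrm{Null}(\Jc(x)\tp)$ by Assumption \ref{Assumption_1}(3) and the LICQ, the orthogonal projector $\PT$ onto $\Tx$ equals $I - \Jc(x) \left(\Jc(x)\tp \Jc(x)\right)^{-1} \Jc(x)\tp = I - \Jc(x) \Jc(x)^\dagger$. Applying this projector to $\nabla f(x)$ immediately gives $\grad f(x) = \nabla f(x) - \Jc(x) \Jc(x)^\dagger \nabla f(x) = \nabla f(x) - \Jc(x)\lambda(x)$, using the definition of $\lambda(x)$.

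For the Hessian formula, I would invoke the standard identity for the Riemannian Hessian on an embedded submanifold endowed with the induced metric: for any $d \in \Tx$ and any smooth curve $\gamma : (-\varepsilon,\varepsilon) \to \M$ with $\gamma(0)=x$ and $\gamma'(0)=d$, one has $\hess f(x)[d] = \PT\!\left( \tfrac{d}{dt} \grad f(\gamma(t)) \big|_{t=0} \right)$. Substituting the gradient formula derived above, differentiating along $\gamma$, and using that the columns of $\Jc(x)$ are precisely the gradients $\nabla c_i(x)$, I would obtain
\begin{equation*}
\frac{d}{dt}\grad f(\gamma(t))\Big|_{t=0} = \nabla^2 f(x) d \;-\; \sum_{i=1}^p \lambda_i(x)\, \nabla^2 c_i(x) d \;-\; \Jc(x)\, \frac{d}{dt}\lambda(\gamma(t))\Big|_{t=0}.
\end{equation*}
The last term lies in $\Nx = \mathrm{range}(\Jc(x))$ and is therefore killed by $\PT$. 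Finally, since $d \in \Tx$ and $\PT$ is self-adjoint, the quadratic form satisfies $d\tp \PT v = d\tp v$, so multiplying on the left by $d\tp$ eliminates the projector and yields the claimed expression $d\tp \hess f(x) d = d\tp \left( \nabla^2 f(x) - \sum_{i=1}^p \lambda_i(x)\nabla^2 c_i(x) \right) d$.

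The main obstacle is not computational but conceptual: justifying the identity $\hess f(x)[d] = \PT(D(\grad f)(x)[d])$ for embedded submanifolds with the induced metric, which is equivalent to showing that the Levi-Civita connection on $\M$ is the tangential component of the ambient flat connection on $\bb{R}^n$. This is a classical result (see, e.g., \cite{Absil2009optimization, boumal2020introduction}) that I would cite rather than reprove. A secondary subtlety is confirming that $\frac{d}{dt}\lambda(\gamma(t))|_{t=0}$ is well defined; this follows because $\lambda$ is smooth near $\M$ under Assumption \ref{Assumption_1} (the pseudoinverse $\Jc(x)^\dagger$ is smooth where $\Jc$ has full column rank), but it ultimately does not matter since the term is annihilated by $\PT$.
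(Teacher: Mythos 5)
Your proof is correct and is essentially the paper's own argument: the paper omits the proof entirely and defers to the standard results in \cite{Absil2009optimization,boumal2020introduction}, which are exactly what you spell out (the Riemannian gradient as the tangential projection of $\nabla f$, and the Riemannian Hessian as the projected ambient derivative of the gradient field, with the $\Jc(x)\,\dot\lambda$ term annihilated by the projector). No gaps.
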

		The proof of the above proposition directly follows \cite{Absil2009optimization,boumal2020introduction}, and hence is omitted for simplicity.

	Given $x \in \M$,  the smallest eigenvalue of $\hess f(x)$ is defined as $$\lambda_{\min}(\hess f(x)) := \min_{d \in \Tx, \norm{d} = 1} ~d\tp \hess f(x) d.$$
	Let $U_x$ be a matrix whose columns forms an orthonormal basis of $\Tx$, we define the projected Hessian of \ref{Prob_Ori} at $x$ as 
	\begin{equation}
		\label{Eq_defin_HX}
		\ca{H}(x):=  U_x\tp \left(\nabla^2 f(x) - \sum_{i=1}^p \lambda_i(x) \nabla^2 c_i(x)\right)U_x,
	\end{equation}
	The following proposition characterizes the relationship between $\ca{H}(x)$ and $\hess f(x)$. 
	
	\begin{prop}
		\label{Prop_SOSP_Rie}
		Given any $x \in \M$, suppose $x$ is a first-order stationary point of \ref{Prob_Ori}, then $\ca{H}(x)$ and $\hess f(x)$ have the same eigenvalues. Moreover, $\lambda_{\min}(\hess f(x)) = \lambda_{\min}(\ca{H}(x))$ and $x$ is a second-order stationary point of \ref{Prob_Ori} if and only if $\ca{H}(x) \succeq 0$. 
	\end{prop}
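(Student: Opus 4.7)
The plan is to view $\ca{H}(x)$ as simply the matrix representation of the self-adjoint operator $\hess f(x)$ in the orthonormal basis given by the columns of $U_x$, and to deduce everything from this identification.

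First I would unpack the definitions. Since the columns of $U_x \in \bb{R}^{n \times (n-p)}$ form an orthonormal basis of $\Tx$ (so $U_x\tp U_x = I_{n-p}$ and $U_x v \in \Tx$ for every $v \in \bb{R}^{n-p}$), the map $v \mapsto U_x v$ is an isometry between $\bb{R}^{n-p}$ and $\Tx$. For any $d \in \Tx$, write $d = U_x v$ with $v = U_x\tp d$ and $\norm{d} = \norm{v}$. Combining this with the quadratic form expression for $\hess f(x)$ supplied by Proposition \ref{Prop_FOSP_Rie} yields
\begin{equation*}
d\tp \hess f(x) d \;=\; d\tp \left(\nabla^2 f(x) - \sum_{i=1}^p \lambda_i(x) \nabla^2 c_i(x)\right) d \;=\; v\tp \ca{H}(x) v,
\end{equation*}
where the last equality is just the definition \eqref{Eq_defin_HX} of $\ca{H}(x)$.

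Next I would extract the spectral conclusions from this identity. Because the correspondence $v \leftrightarrow U_x v$ is a bijective isometry, and because $\hess f(x)$ is a self-adjoint operator on $\Tx$ while $\ca{H}(x)$ is a symmetric matrix on $\bb{R}^{n-p}$, they share the same quadratic form on a Hilbert-space isomorphic pair, hence the same eigenvalues with the same multiplicities. In particular,
\begin{equation*}
\lambda_{\min}(\hess f(x)) \;=\; \min_{\substack{d \in \Tx\\ \norm{d}=1}} d\tp \hess f(x) d \;=\; \min_{\substack{v \in \bb{R}^{n-p}\\ \norm{v}=1}} v\tp \ca{H}(x) v \;=\; \lambda_{\min}(\ca{H}(x)).
\end{equation*}

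Finally, for the second-order stationarity equivalence, $x$ being a second-order stationary point of \ref{Prob_Ori} is by Definition \ref{Defin_SOSP} precisely the statement that $d\tp (\nabla^2 f(x) - \sum_{i} \lambda_i(x) \nabla^2 c_i(x)) d \geq 0$ for all $d \in \Tx$. By the identity above this is the same as $v\tp \ca{H}(x) v \geq 0$ for all $v \in \bb{R}^{n-p}$, i.e., $\ca{H}(x) \succeq 0$. Since every step is just a change of basis, there is no real obstacle; the only point worth verifying carefully is that $U_x$ has orthonormal columns (so that the spectral correspondence is an honest congruence, not merely a similarity), which is guaranteed by the choice of $U_x$ in \eqref{Eq_defin_HX}.
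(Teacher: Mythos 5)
Your proof is correct and is essentially the argument the paper has in mind: the paper omits its own proof, stating only that it ``directly follows from Proposition \ref{Prop_FOSP_Rie} and \cite{Absil2009optimization},'' and your identification of $\ca{H}(x)$ as the matrix of the self-adjoint operator $\hess f(x)$ in the orthonormal basis $U_x$ (so that the two are unitarily equivalent) is precisely that standard route. The only remark worth adding is that passing from equality of quadratic forms to equality of operators uses polarization together with the symmetry of both $\hess f(x)$ and $\ca{H}(x)$, which you have implicitly and correctly relied on.
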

	The proof of the above proposition directly follows from the Proposition \ref{Prop_FOSP_Rie} and \cite{Absil2009optimization}, and hence we omit it for simplicity. 
	
		\begin{defin}[\cite{lee2019first,criscitiello2019efficiently}]
			\label{Defin_Saddles}
			Given $x \in \M$, we say that $x$ is a strict saddle point of \ref{Prob_Ori} if $x$ is a first-order stationary point of \ref{Prob_Ori} and  $\lambda_{\min}(\hess f(x)) < 0$. 
			Moreover, given $x \in \bb{R}^n$, we say that  $x$ is a strict saddle point of \ref{Prob_Pen} if $x$ is a first-order stationary point of \ref{Prob_Pen} and $\lambda_{\min}(\nabla^2 h(x)) < 0$. 
		\end{defin}
	
	\medskip
	\begin{defin}
		Given $x \in \bb{R}^n$, we say $x$ is a first-order stationary point of \ref{Prob_Pen} if
		\begin{equation}
			\nabla h(x) = 0.
		\end{equation}
		Besides, when $h$ is twice-order differentiable, we say a point $x \in \bb{R}^n$ is a second-order stationary point of $h$ if $x$ is a first-order stationary point of $h$ and satisfies
		\begin{equation}
			\nabla^2 h(x) \succeq 0. 
		\end{equation}
	\end{defin}

	Next, we present the definition of the {\L}ojasiewicz gradient inequality \cite{lojasiewicz1961probleme,lojasiewicz1963propriete,
		bolte2014proximal}, which is a powerful tool in analyzing the convergence of various unconstrained optimization approaches.
	\begin{defin}
		Given $x  \in \bb{R}^n$, the function $f$ is said to satisfy the (Euclidean) {\L}ojasiewicz gradient inequality at $x$ if and only if there exist a neighborhood $\ca{U}$ of $x$, and constants $\theta \in (0,1]$, $C>0$, such that the following inequality holds for any $y \in \ca{U}$,
		\begin{equation*}
			\norm{\nabla  f(y)}  \geq C |f(y) - f(x)|^{1-\theta}.
		\end{equation*}
	\end{defin}
	The {\L}ojasiewicz gradient inequality on a Riemannian manifold \cite{hosseini2015convergence} can be similarly defined in the following definition.
	\begin{defin}
		Given $x \in \M$, the function $f$ is said to satisfy the Riemannian {\L}ojasiewicz gradient inequality at $x$ if and only if there exist a neighborhood $\ca{U} \subset \M$ of $x$, and constants $\theta \in (0,1]$, $C>0$, such that the following inequality holds for any $y \in \ca{U}$, 
		\begin{equation*}
			\norm{\grad  f(y)}  \geq C |f(y) - f(x)|^{1-\theta}.
		\end{equation*}
	\end{defin}
	The constant $\theta$ is usually referred as {\it (Riemannian) {\L}ojasiewicz exponent in the gradient inequality}, which is simply abbreviated as {\it (Riemannian) {\L}ojasiewicz exponent}.

	\subsection{Constants}
	For any given $x \in \M$, we define the positive scalar $\rho_x \leq 1$ as
	\begin{equation*}
		\rho_x := \mathop{\arg\max}_{0 < \rho \leq 1}~ \rho \quad \text{s.t.} ~ \inf \left\{\sigma_{\min}(\Jc(y)): y \in \bb{R}^n, \norm{y-x} \leq \rho \right\} \geq \frac{1}{2} \sigma_{\min}(\Jc(x)).
	\end{equation*}
	Based on the definition of $\rho_x$, we can define the set $\Theta_x:= \{ y \in \bb{R}^n: \norm{y-x} \leq \rho_x \}$ and define several constants as follows:
	\begin{itemize}
		\item $\Lsc  := \sigma_{\min}(\Jc(x))$; 
		\item $\Lf  := \sup_{y \in \Theta_x}~ \norm{\nabla f(\A(y))}$;
		\item $\Mc  := \sup_{y \in \Theta_x}   \norm{\Jc(y)} $;
		\item $\Ma  := \sup_{y \in \Theta_x}  \norm{\Ja(y)}$;
		\item $\Lg  := \sup_{y, z \in \Theta_x, y\neq z} ~   \frac{\norm{\nabla g(y) - \nabla g(z)}}{\norm{y-z}}$;
		\item $\Lc := \sup_{y, z \in \Theta_x, y\neq z} \frac{\norm{\Jc(y) - \Jc(z) }}{\norm{y-z}} $;
		\item $\La := \sup_{y, z \in \Theta_x, y\neq z} \frac{\norm{\Ja(y) - \Ja(z) }}{\norm{y-z}} $;
		\item $\Lac := \sup_{y, z \in \Theta_x, y\neq z} \frac{\norm{ \Ja(y)\Jc(\A(y)) - \Ja(z)\Jc(\A(z)) }}{\norm{y-z}} $.
	\end{itemize}
	Based on these constants, we further set 
	\begin{equation*}
		\varepsilon_x := \min \left\{ \frac{\rho_{x}}{2}, \frac{\Lsc}{32\Lc(\Ma+1)}, \frac{\Lsc ^2}{8\Lac\Mc } \right\},
	\end{equation*}
	and define the following sets:
	\begin{itemize}
		
		\item $\Omegax{x} := \left\{ y \in \bb{R}^n: \norm{y - x}  \leq \varepsilon_x \right\}$;
		\item $\BOmegax{x} := \left\{ y \in \bb{R}^n: \norm{y - x}  \leq \frac{\Lsc \varepsilon_x}{4\Mc(\Ma +1) + \Lsc}  \right\}$; 
		\item $\Omega:= \bigcup_{x \in \M} \Omegax{x}$;
		\item $\bar{\Omega}:= \bigcup_{x \in \M} \BOmegax{x}$.
	\end{itemize}
	It is worth mentioning that Assumption \ref{Assumption_1} guarantees that $\Lsc > 0$ for any given $x \in \M$, which implies that $\varepsilon_x > 0$. On the other hand, we can conclude that $\BOmegax{x} \subset \Omegax{x} \subset \Theta_x$ holds for any given $x \in \M$, and $\M$ lies in the interior of $\bar{\Omega}$.

	\begin{defin}
		\label{Cond_beta}
		For any given $x \in \M$, we set
		\begin{equation*}
			\beta_{x}:= \max\left\{ \frac{128\Lg (\Ma +1)^2}{\Lsc^2 }, \frac{64\Lf (\Ma +1)\Lac }{\Lsc ^3}, \frac{16\Lf \La (2\Ma  + 1)}{\Lsc ^2}\right\}.
		\end{equation*} 
	\end{defin}
	
	\begin{rmk}
		When the manifold $\M$ is compact, there exists a finite set $\ca{I}\subset \M$ such that $\M \subseteq \bigcup_{x \in \ca{I}} \Theta_x$. Therefore, we can choose uniform positive lower bounds for $\Lsc$ and $\varepsilon_x$, while find uniform upper bounds for all the other aforementioned constants. Specifically, we can choose a uniform upper bound for $\beta_x$,
		which can be marked as a threshold.
		Any $\beta$ greater than this threshold
		can determine an exact penalty function for \eqref{Prob_Pen}.
	\end{rmk}
	
	Finally, the following assumption is needed when we discuss 
	the second-order stationarity of \ref{Prob_Pen}.
	\begin{assumpt}{\bf Assumption on twice-order differentiability}
		\label{Assumption_4}
		
		\begin{itemize}
			\item $f$, $c$ and $\A$ are twice differentiable in $\bb{R}^n$.
		\end{itemize}
	\end{assumpt}

	\section{Theoretical Results}
	In this section, we present some theoretical properties of \ref{Prob_Pen}. We begin with the characteristics of the mapping $\A$ in Section \ref{Subsection_Theo_properties_A}. Then we investigate the stationarity of \ref{Prob_Pen} as well as the {\L}ojasiewicz exponents in Section \ref{Subsection_Theo_properties_CDF}. Finally, in Section \ref{Subsection_framework}, we propose a framework 
	showing that the convergence properties of  \ref{Prob_Pen}
	can directly be inherited from those of the applied unconstrained optimization approaches.

	\subsection{Theoretical properties of $\A$}
	\label{Subsection_Theo_properties_A}
	We start with evaluating the relationships among $\norm{c(x)}$, $\norm{c(\A(x))}$ and $\mathrm{dist}(x, \M)$ in the following lemmas.
	\begin{lem}
		\label{Le_bound_cx}
		For any given $x \in \M$, the following inequalities hold for any $\y \in \Omegax{x}$,
		\begin{equation}
			\frac{1}{\Mc } \norm{c(\y)} \leq \mathrm{dist}(\y, \M) \leq \frac{2}{\Lsc } \norm{c(\y)}.
		\end{equation}
	\end{lem}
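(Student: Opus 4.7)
The lemma consists of two one-sided bounds, so my plan is to treat them separately. The left inequality is a direct Lipschitz estimate on $c$, while the right inequality is a local error bound that requires constructing a feasible point close to $y$.

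For the left inequality $\Mc^{-1}\norm{c(y)} \le \mathrm{dist}(y,\M)$, I would pick any $z \in \mathrm{proj}(y,\M)$, so that $\norm{y-z}=\mathrm{dist}(y,\M)$ and $c(z)=0$. I first verify that the whole segment $[z,y]$ sits inside $\Theta_x$: since $x\in\M$ is a candidate in the projection problem, $\norm{y-z}\le\norm{y-x}\le\varepsilon_x$, hence $\norm{z-x}\le 2\varepsilon_x\le\rho_x$ by the definition of $\varepsilon_x$, and convexity of $\Theta_x$ does the rest. Then a fundamental-theorem-of-calculus bound gives
\[
  \norm{c(y)}=\norm{c(y)-c(z)}=\Bigl\lVert\int_0^1 \Jc(z+t(y-z))\tp(y-z)\,dt\Bigr\rVert\le\Mc\,\norm{y-z},
\]
which is the desired inequality.

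For the right inequality $\mathrm{dist}(y,\M)\le 2\Lsc^{-1}\norm{c(y)}$, my plan is to construct a point $y_\infty\in\M$ explicitly via a continuous Newton flow starting from $y$. Define
\[
  \dot y(t) = -\Jc(y(t))\bigl(\Jc(y(t))\tp\Jc(y(t))\bigr)^{-1}c(y(t)),\qquad y(0)=y.
\]
The algebraic identity $\Jc(y(t))\tp\dot y(t)=-c(y(t))$ yields $\tfrac{d}{dt}c(y(t))=-c(y(t))$, so $c(y(t))=e^{-t}c(y)$. This is the crucial gain over a direct projection argument: it gives an \emph{exact} exponential decay of the residual, which in turn controls the arc length by
\[
  \norm{\dot y(t)}\le\frac{\norm{c(y(t))}}{\sigma_{\min}(\Jc(y(t)))}\le\frac{2e^{-t}}{\Lsc}\norm{c(y)},
\]
provided $y(t)\in\Theta_x$. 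Integrating from $0$ to $\infty$ and using $\mathrm{dist}(y,\M)\le\norm{y-y(\infty)}$ would then give precisely the factor $2/\Lsc$ appearing in the lemma, once we know the limit $y(\infty)$ lies in $\M$ (which follows from $c(y(\infty))=0$ and completeness).

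The main obstacle is showing the flow never leaves $\Theta_x$, so that the bound $\sigma_{\min}(\Jc(y(t)))\ge\Lsc/2$ remains available. This is where the particular form of $\varepsilon_x$ matters: integrating the velocity bound up to any finite time $T$ for which $y([0,T])\subset\Theta_x$ gives
\[
  \norm{y(T)-y(0)}\le\tfrac{2}{\Lsc}\norm{c(y)}\le\tfrac{2\Mc}{\Lsc}\,\varepsilon_x,
\]
and the choice $\varepsilon_x\le\Lsc/(32\Lc(\Ma+1))$ together with $\varepsilon_x\le\rho_x/2$ and the Lipschitz lower bound $\sigma_{\min}(\Jc(\cdot))\ge\Lsc-\Lc\norm{\cdot-x\,}$ implies $\norm{y(T)-x}<\rho_x$ strictly, so the set of times $T$ with $y([0,T])\subset\Theta_x$ is open, closed and non-empty in $[0,\infty)$. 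Thus the flow is globally defined, $y(\infty)$ exists in $\M$, and the estimate closes.
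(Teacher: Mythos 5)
Your proof of the left inequality is correct and is essentially the paper's own argument (projection point $z$, the segment $[z,y]$ inside $\Theta_x$, a mean-value/Lipschitz bound with constant $\Mc$). For the right inequality you take a genuinely different route --- a continuous Gauss--Newton flow with exact residual decay $c(y(t))=e^{-t}c(y)$ --- and the arc-length computation would indeed deliver the constant $2/\Lsc$. The problem is the containment step, and it is a genuine gap. The only a priori control on the residual is $\norm{c(y)}\le\Mc\norm{y-x}\le\Mc\varepsilon_x$, so your arc-length bound gives at best
\begin{equation*}
\norm{y(T)-x}\;\le\;\norm{y-x}+\tfrac{2}{\Lsc}\norm{c(y)}\;\le\;\Bigl(1+\tfrac{2\Mc}{\Lsc}\Bigr)\varepsilon_x .
\end{equation*}
Since $\Mc\ge\norm{\Jc(x)}\ge\sigma_{\min}(\Jc(x))=\Lsc$, the factor $1+2\Mc/\Lsc$ is always at least $3$; when the other two terms in the definition of $\varepsilon_x$ are inactive you have $\varepsilon_x=\rho_x/2$, and the bound only yields $\norm{y(T)-x}\le\tfrac{3}{2}\rho_x$, which does not place $y(T)$ in $\Theta_x$. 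Neither $\varepsilon_x\le\Lsc/(32\Lc(\Ma+1))$ nor $\varepsilon_x\le\Lsc^2/(8\Lac\Mc)$ controls the ratio $\Mc/\Lsc$ against $\rho_x$, and the lower bound $\sigma_{\min}(\Jc(w))\ge\Lsc-\Lc\norm{w-x}$ is itself only available for $w\in\Theta_x$ (that is where $\Lc$ is defined), so it cannot be used to push the flow past the boundary. Hence the ``open, closed and non-empty'' argument does not close with the paper's $\varepsilon_x$, and the estimate $\sigma_{\min}(\Jc(y(t)))\ge\Lsc/2$ is unjustified for large $t$.

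The paper sidesteps this entirely by never leaving the projection point: for $z\in\mathrm{proj}(y,\M)$ one has $\norm{z-y}\le\norm{y-x}\le\varepsilon_x$ automatically, because $x$ itself is feasible, so $z\in\Theta_x$ with nothing dynamical to control. The lower bound $\norm{c(y)}\ge\tfrac{\Lsc}{2}\norm{y-z}$ is then extracted from the optimality condition $y-z\in\mathrm{range}(\Jc(z))$, which permits $\norm{\Jc(z)\tp(y-z)}\ge\sigma_{\min}(\Jc(z))\norm{y-z}$, together with a second-order correction $\Lc\norm{y-z}^2$ absorbed via $\varepsilon_x\Lc\le\Lsc/2$. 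If you wish to keep the flow construction, you would have to shrink the neighborhood to something like $\norm{y-x}\le\tfrac{\Lsc\rho_x}{2(\Lsc+2\Mc)}$, which proves a weaker statement than the lemma as stated.
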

	\begin{proof}
	Let $z \in \mathrm{proj}(y, \M)$, then from the definition of $z$ we can conclude that $\norm{z-y}\leq \norm{y-x}$. Thus $\norm{z-x} \leq \norm{z-y} + \norm{y-x} \leq 2\varepsilon_x \leq \rho_x$, and hence $z \in \Theta_x$. 
	By the mean-value theorem, for any fixed $\nu \in \bb{R}^p$ there exists a point $\xi_\nu \in  \bb{R}^n$ that is a convex combination of $\y$ and $z$ such that $ \nu\tp c(y) = (y-x)\tp\Jc(\xi_\nu)\nu $. By the convexity of $\Theta_x$, $\xi_\nu \in \Theta_x$ holds for any $\nu \in \bb{R}^p$. Therefore, we get
	\begin{equation*}
		\norm{c(\y)} = \sup_{\nu \in \bb{R}^p, \norm{\nu} = 1} \nu\tp c(y) = \sup_{\nu \in \bb{R}^p, \norm{\nu} = 1} (y-x)\tp\Jc(\xi_\nu)\nu  \leq 
		\sup_{\nu \in \Theta_x} 
		\norm{\Jc(\xi_\nu)} \norm{\y - z} \leq \Mc  \mathrm{dist}(\y, \M).
	\end{equation*}
	Moreover, it follows from the definition of $z$ that $y-z \in \mathrm{range}(\Jc(z))$.  As a result, let $\tilde{\nu} = \frac{\Jc(z)\tp(y-z)}{\norm{\Jc(z)\tp(y-z)}}$, we have
	\begin{equation*}
		\begin{aligned}
			&\norm{c(\y)}= \sup_{\nu \in \bb{R}^p, \norm{\nu} = 1} (y-z)\tp\Jc(\xi_\nu)\nu  \geq (y-z)\tp\Jc(\xi_{\tilde{\nu}})\tilde{\nu}  \\
			={}&  (y-z)\tp\Jc(z)\tilde{\nu} + (y-z)\tp\left(\Jc(z) - \Jc(\xi_{\tilde{\nu}})\right)\tilde{\nu}  \\
			={}& \norm{ \Jc(z)\tp (y-z)} + (y-z)\tp\left(\Jc(z) - \Jc(\xi_{\tilde{\nu}})\right)\tilde{\nu} \\
			\geq{}& \norm{ \Jc(z)\tp (y-z)} - \Lc \norm{y-z}^2 \\
			\geq{}&  (\Lsc - \varepsilon_x \Lc )\mathrm{dist}(\y, \M)  \geq  \frac{\Lsc }{2} \mathrm{dist}(\y, \M).
		\end{aligned}
	\end{equation*}
	\end{proof}

	\begin{lem}
		\label{Le_Ax_c}
		For any given $x \in \M$, it holds that
		\begin{equation}
			\norm{\A(\y) - \y }  \leq \frac{2(\Ma +1)}{\Lsc }\norm{c(\y)}, \qquad \text{ for any $\y \in \Omegax{x}$}.
		\end{equation}
	\end{lem}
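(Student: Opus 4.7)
The plan is to use the projection of $y$ onto $\M$ as a reference point where $\A$ acts as the identity, bound $\norm{\A(y) - y}$ by triangle inequality, control the $\A$-term via the Lipschitz bound $\Ma$, and then convert $\mathrm{dist}(y, \M)$ into $\norm{c(y)}$ using the previous lemma.

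Concretely, for a fixed $x \in \M$ and $y \in \Omegax{x}$, I would take $z \in \mathrm{proj}(y, \M)$. As verified in the proof of Lemma \ref{Le_bound_cx}, the minimizing property gives $\norm{z - y} \leq \norm{y - x} \leq \varepsilon_x$, so $\norm{z - x} \leq 2\varepsilon_x \leq \rho_x$, and hence $z \in \Theta_x$. Since $y \in \Omegax{x} \subset \Theta_x$ as well, both points sit in the region where the Lipschitz-type constants are valid.

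Then, since $\A(z) = z$ by the second assumption of Assumption \ref{Assumption_2}, I apply the triangle inequality
\begin{equation*}
\norm{\A(y) - y} \leq \norm{\A(y) - \A(z)} + \norm{\A(z) - z} + \norm{z - y} = \norm{\A(y) - \A(z)} + \norm{z - y}.
\end{equation*}
The convexity of $\Theta_x$ together with the mean value theorem (or direct integration of $\Ja$ along the segment from $z$ to $y$, both of which lie in $\Theta_x$) gives $\norm{\A(y) - \A(z)} \leq \Ma \norm{y - z}$. Combining the two estimates yields
\begin{equation*}
\norm{\A(y) - y} \leq (\Ma + 1) \norm{y - z} = (\Ma + 1) \mathrm{dist}(y, \M).
\end{equation*}
Finally, invoking Lemma \ref{Le_bound_cx} to bound $\mathrm{dist}(y, \M) \leq \frac{2}{\Lsc} \norm{c(y)}$ gives the claimed inequality.

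The only mild subtlety is ensuring that the segment from $y$ to its projection $z$ lies entirely inside $\Theta_x$ so that we may legitimately use $\Ma$; this is precisely why $\varepsilon_x$ was chosen to be at most $\rho_x/2$ in its definition, so the verification is immediate from the choice of constants rather than an actual obstacle. Everything else is a straightforward chaining of the triangle inequality, the Lipschitz property of $\A$, and the preceding lemma.
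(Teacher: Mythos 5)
Your proposal is correct and follows essentially the same route as the paper: pick $z \in \mathrm{proj}(y,\M)$, verify $z \in \Theta_x$, use $\A(z)=z$ and the Lipschitz bound $\Ma$ on $\Theta_x$ to get $\norm{\A(y)-y} \leq (\Ma+1)\,\mathrm{dist}(y,\M)$, and finish with Lemma \ref{Le_bound_cx}. The paper merely compresses your triangle-inequality step into the single line $\norm{(\A(y)-y)-(\A(z)-z)} \leq (\Ma+1)\,\mathrm{dist}(y,\M)$; the content is identical.
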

	\begin{proof}
	For any given $\y \in \Omegax{x}$, we choose $z \in \mathrm{proj}(y, \M)$. Then we can conclude that $z \in \Theta_x$. Furthermore, from the Lipschitz continuity of $\A$ and the fact that $\A(z) - z = 0$,  it holds that
	\begin{equation}
		\norm{\A(\y) - \y} = \norm{(\A(\y) - \y)  - (\A(z) - z)}  \leq  (\Ma +1)\mathrm{dist}(y, \M) \leq \frac{2(\Ma +1)}{\Lsc }\norm{c(\y)},
	\end{equation}
	where the last inequality follows from Lemma \ref{Le_bound_cx}.
	\end{proof}

	\medskip
	\begin{lem}
		\label{Le_A_secondorder_descrease}
		For any given $x \in \M$,  it holds that 
		\begin{equation}
			\norm{c(\A(\y))} \leq \frac{4\Lac }{\Lsc ^2}\norm{c(\y)}^2, \qquad \text{for any $\y \in \Omegax{x}$}.
		\end{equation}
	\end{lem}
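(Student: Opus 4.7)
The plan is to exploit Assumption \ref{Assumption_2}, specifically the relation $\Ja(z)\Jc(z) = 0$ for $z \in \M$, which tells us that the first-order derivative of the composition $c \circ \A$ vanishes on the feasible set. This makes $y \mapsto c(\A(y))$ a mapping whose derivative is zero on $\M$, so its size should decay quadratically in the distance to $\M$. Combining this quadratic decay with the two-sided bound between $\norm{c(\y)}$ and $\mathrm{dist}(\y, \M)$ provided by Lemma \ref{Le_bound_cx} will yield the claimed quadratic estimate.

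Concretely, fix $y \in \Omegax{x}$ and pick $z \in \mathrm{proj}(y, \M)$. As in the proof of Lemma \ref{Le_bound_cx}, $\norm{z-x}\leq 2\varepsilon_x \leq \rho_x$, so $z \in \Theta_x$; by convexity of $\Theta_x$, the whole segment $z + t(y-z)$ with $t \in [0,1]$ lies in $\Theta_x$. I introduce the auxiliary curve $\phi(t) := c(\A(z + t(y-z)))$, observe that $\phi(0) = c(z) = 0$, and compute by the chain rule
\begin{equation*}
\phi'(t) = \bigl(\Ja(z+t(y-z))\,\Jc(\A(z+t(y-z)))\bigr)\tp (y-z).
\end{equation*}
The third bullet of Assumption \ref{Assumption_2}, together with $\A(z) = z$, gives $\Ja(z)\Jc(\A(z)) = \Ja(z)\Jc(z) = 0$, so $\phi'(0) = 0$. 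Writing $c(\A(y)) = \phi(1) = \int_0^1 [\phi'(t) - \phi'(0)]\,dt$ and applying the Lipschitz bound embedded in the definition of $\Lac$ to the map $w \mapsto \Ja(w)\Jc(\A(w))$ on $\Theta_x$, I get
\begin{equation*}
\norm{\phi'(t) - \phi'(0)} \leq \Lac\, t\,\norm{y-z}^2.
\end{equation*}
Integrating gives $\norm{c(\A(y))} \leq \tfrac{\Lac}{2}\norm{y-z}^2 = \tfrac{\Lac}{2}\mathrm{dist}(y,\M)^2$, and invoking the upper bound $\mathrm{dist}(y,\M) \leq \tfrac{2}{\Lsc}\norm{c(y)}$ from Lemma \ref{Le_bound_cx} yields a bound of the form $\tfrac{C \Lac}{\Lsc^2}\norm{c(y)}^2$ with a constant $C$ at most $4$, which matches the stated inequality.

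The main technical point to watch is to verify that all intermediate points $z + t(y-z)$ and their images $\A(z+t(y-z))$ remain inside the Lipschitz region underlying $\Lac$, and that the Jacobian identity $\Ja(z)\Jc(z) = 0$ can be applied exactly at the endpoint $z \in \M$. Once this set-inclusion bookkeeping is settled (already done above via $\Omegax{x} \subset \Theta_x$ and convexity of $\Theta_x$), the rest is a routine integration-by-parts/Taylor argument. No additional obstacle is anticipated; the power of the estimate comes entirely from the vanishing-derivative structure built into Assumption \ref{Assumption_2}.
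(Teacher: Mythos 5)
Your proof is correct and follows essentially the same route as the paper's: both choose $z\in\mathrm{proj}(\y,\M)$, exploit $\Ja(z)\Jc(\A(z))=0$ together with the Lipschitz constant $\Lac$ along the segment from $z$ to $\y$ inside $\Theta_x$, and finish with the bound $\mathrm{dist}(\y,\M)\leq \tfrac{2}{\Lsc}\norm{c(\y)}$ from Lemma \ref{Le_bound_cx}. The only cosmetic difference is that you integrate $\phi'$ exactly (gaining a harmless extra factor of $\tfrac{1}{2}$), whereas the paper applies the mean-value theorem direction-by-direction over unit vectors $\nu\in\bb{R}^p$.
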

	
	\begin{proof}
	For any given $\y \in \Omegax{x}$, we choose $z \in \mathrm{proj}(y, \M)$. It holds that $z \in \Theta_x$. By the mean-value theorem, for any $\nu \in \bb{R}^p$, there 
	exists $t_\nu \in [0,1]$ and $\xi_\nu = t_\nu \y + (1-t_\nu)z$ such that
	\begin{equation}
		\nu\tp  c(\A(\y)) = \nu\tp \left(\Ja(\xi_\nu)\Jc(\A(\xi_\nu))\right)\tp (\y - z) .
	\end{equation}
	The convexity of $\Theta_x$ ensures that $\xi_\nu \in \Theta_x$ holds for any $\nu \in \bb{R}^p$. 
	Therefore, from the definition of $\Lac $ and $\Omegax{x}$, we get
	\begin{equation}
		\begin{aligned}
			\norm{c(\A(\y))} ={}& \sup_{\nu \in \bb{R}^p, \norm{\nu} = 1}  \nu\tp  c(\A(\y)) = \sup_{\nu \in \bb{R}^p, \norm{\nu} = 1}  
			\nu\tp \left(\Ja(\xi_\nu)\Jc(\A(\xi_\nu))\right)\tp (\y - z) 
			\\
			\leq{}& \sup_{\nu \in \bb{R}^p, \norm{\nu} = 1} \norm{\left(\Ja(\xi_\nu)\Jc(\A(\xi_\nu))\right)\tp (\y - z)} 
			\leq{}   \sup_{\nu \in \bb{R}^p, \norm{\nu} = 1} \norm{\Ja(\xi_\nu)\Jc(\A(\xi_\nu))}\mathrm{dist}(\y, \M)\\
			\leq{}& \Lac  \sup_{\nu \in \bb{R}^p, \norm{\nu} = 1} \norm{\xi_\nu - z} \mathrm{dist}(\y, \M) 
			\leq{} \Lac  \mathrm{dist}(\y, \M) ^2 \leq \frac{4\Lac }{\Lsc ^2}\norm{c(\y)}^2,
		\end{aligned}
	\end{equation}
	where the last inequality follows from Lemma \ref{Le_bound_cx}. 
	\end{proof}

	\medskip
	For any given $x \in \M$ and $y \in \Omegax{x}$, Lemma \ref{Le_A_secondorder_descrease} illustrates that the operator $\A$ can reduce the feasibility violation of $y$ quadratically when $y$ is sufficiently close to $\M$. 
	
	Next, we present the theoretical property of $\A^{\infty}(\y)$.
	\begin{lem}
		\label{Le_dist_Ainfty}
		For any given $x \in \M$ and any $\y \in \BOmegax{x}$, $\A^{\infty}(\y)$ exists and  $\A^{\infty}(\y) \in \Omegax{x} \cap \M$. Moreover,  it holds that
		\begin{equation}
			\norm{\A^{\infty}(\y) - \y} \leq \frac{4(\Ma +1)}{\Lsc } \norm{c(\y)}. 
		\end{equation}
	\end{lem}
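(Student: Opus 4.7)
My plan is to run the fixed-point iteration $y_0 = y, \, y_{k+1} = \A(y_k)$ and show by induction that all iterates stay in $\Omegax{x}$, that $\|c(y_k)\|$ contracts at least geometrically with rate $1/2$, and that the resulting Cauchy sequence converges to the claimed limit with the claimed bound on total displacement.

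The central observation is that the three earlier lemmas cooperate nicely provided the iterates do not leave $\Omegax{x}$. From Lemma \ref{Le_bound_cx}, for any $z \in \Omegax{x}$ we have $\norm{c(z)} \leq \Mc \norm{z - x} \leq \Mc \varepsilon_x \leq \Mc \cdot \frac{\Lsc^2}{8 \Lac \Mc} = \frac{\Lsc^2}{8 \Lac}$ by the definition of $\varepsilon_x$. Plugging this into Lemma \ref{Le_A_secondorder_descrease} turns the quadratic contraction $\norm{c(\A(z))} \leq \frac{4\Lac}{\Lsc^2}\norm{c(z)}^2$ into the geometric contraction $\norm{c(\A(z))} \leq \frac{1}{2}\norm{c(z)}$. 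Hence inductively (so long as the iterates remain in $\Omegax{x}$) one has $\norm{c(y_k)} \leq 2^{-k}\norm{c(y)}$, and Lemma \ref{Le_Ax_c} gives $\norm{y_{k+1} - y_k} \leq \frac{2(\Ma+1)}{\Lsc}\norm{c(y_k)} \leq \frac{2(\Ma+1)}{\Lsc}2^{-k}\norm{c(y)}$.

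The main obstacle, and the reason why the smaller radius $\BOmegax{x}$ is built with the particular constant $\frac{\Lsc \varepsilon_x}{4\Mc(\Ma+1) + \Lsc}$, is closing the induction on feasibility: I need the accumulated displacement from $y$ not to push any iterate outside $\Omegax{x}$. Summing the geometric bound telescopically gives $\norm{y_k - y} \leq \frac{4(\Ma+1)}{\Lsc}\norm{c(y)}$, and combining with $\norm{c(y)} \leq \Mc \norm{y - x}$ yields
\begin{equation*}
\norm{y_k - x} \leq \norm{y - x} \left(1 + \frac{4(\Ma+1)\Mc}{\Lsc}\right) = \norm{y-x} \cdot \frac{\Lsc + 4\Mc(\Ma+1)}{\Lsc}.
\end{equation*}
Since $y \in \BOmegax{x}$, the right-hand side is exactly $\leq \varepsilon_x$, which closes the induction and shows $y_k \in \Omegax{x}$ for all $k$.

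Finally, the partial-sum estimate $\norm{y_{k+m} - y_k} \leq \frac{4(\Ma+1)}{\Lsc} 2^{-k}\norm{c(y)}$ shows $\{y_k\}$ is Cauchy, so $\A^{\infty}(y) = \lim_k y_k$ exists; continuity of $c$ together with $\norm{c(y_k)} \to 0$ gives $\A^{\infty}(y) \in \M$; passing to the limit in the feasibility induction gives $\A^{\infty}(y) \in \Omegax{x}$; and passing to the limit in $\norm{y_k - y} \leq \frac{4(\Ma+1)}{\Lsc}\norm{c(y)}$ yields the claimed displacement bound.
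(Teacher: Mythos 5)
Your proof is correct and follows essentially the same route as the paper's: the same induction keeping every iterate $\A^k(\y)$ inside $\Omegax{x}$ by telescoping the displacement bound from Lemma \ref{Le_Ax_c} and invoking the defining radius of $\BOmegax{x}$, the same halving of $\norm{c(\cdot)}$ obtained from Lemma \ref{Le_A_secondorder_descrease} together with the definition of $\varepsilon_x$, and the same passage to the limit for existence, feasibility, and the displacement estimate. The only cosmetic difference is that you make explicit why $\varepsilon_x \leq \Lsc^2/(8\Lac\Mc)$ converts the quadratic contraction into a factor-$\frac{1}{2}$ geometric one, a step the paper leaves implicit.
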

	The proof for Lemma \ref{Le_dist_Ainfty} is presented in Section \ref{Section_proofs_311}.

	In the rest of this subsection, we study the properties of $\Ja(x)$. The following lemmas characterize the range space and null space of $\Ja(x)$ for any given $x \in \M$.  
	\begin{lem}
		\label{Le_Ja_identical}
		For any given $x \in \M$, the  inclusion 
		$\Ja(x)\tp d  \in \Tx$
		holds for any $d  \in \bb{R}^n$. 
		Moreover, when $d \in \Tx$, it holds that
		$\Ja(x)\tp d = d$. 
	\end{lem}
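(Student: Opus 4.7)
The plan is to split the two claims and handle each directly from the blanket assumptions on $\A$. The first claim --- that $\Ja(x)\tp d \in \Tx$ for every $d \in \bb{R}^n$ --- reduces, by the defining equality $\Tx = \mathrm{Null}(\Jc(x)\tp)$, to showing $\Jc(x)\tp \Ja(x)\tp d = 0$. Transposing, this is precisely $(\Ja(x)\Jc(x))\tp d = 0$, and the third item in Assumption \ref{Assumption_2} supplies $\Ja(x)\Jc(x) = 0$ on $\M$, so the identity holds for every $d$. No further computation is needed.

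For the second claim --- that $\Ja(x)\tp d = d$ whenever $d \in \Tx$ --- I would invoke LICQ from Assumption \ref{Assumption_1} to guarantee that $\M$ is a smooth embedded submanifold in a neighborhood of $x$, so every element of the tangent space $\Tx$ is realized as the velocity of some smooth curve in $\M$. Concretely, pick $\gamma : (-\varepsilon, \varepsilon) \to \M$ with $\gamma(0) = x$ and $\gamma'(0) = d$. Since $\A$ restricts to the identity on $\M$ by Assumption \ref{Assumption_2}, we have $\A(\gamma(t)) = \gamma(t)$ for all sufficiently small $t$. Differentiating this identity at $t = 0$ via the chain rule yields $\Ja(x)\tp d = d$, using the paper's convention that the standard Jacobian of $\A$ is $\Ja(x)\tp$.

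I do not anticipate any real obstacle; both parts are essentially one-line consequences of the two structural properties of $\A$ in Assumption \ref{Assumption_2}. The only subtle point is the realization of every element of $\Tx$ as the velocity of a curve in $\M$ at $x$, and this follows immediately from the implicit function theorem under LICQ, which I would cite as a standard fact rather than reprove. As a sanity check, the two claims together say that $\Ja(x)\tp$ acts on $\bb{R}^n$ as a (not necessarily orthogonal) projector onto $\Tx$ which fixes $\Tx$ pointwise, consistent with the intended geometric role of $\A$ as a ``retraction-like'' constraint dissolving operator.
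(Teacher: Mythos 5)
Your argument for the first claim is the same as the paper's, just phrased via $\Tx = \mathrm{Null}(\Jc(x)\tp)$ directly rather than via $\Nx^{\perp}$; both reduce to $\Ja(x)\Jc(x)=0$ from Assumption \ref{Assumption_2}. For the second claim you take a genuinely different route. The paper never constructs a curve on $\M$: it perturbs along the straight line $x+td$ (which leaves the manifold), uses a second-order Taylor expansion of $c$ to get $\norm{c(x+td)} = \ca{O}(t^2)$ when $d \in \Tx$, and then invokes Lemma \ref{Le_Ax_c} to conclude $\norm{\A(x+td)-(x+td)} = \ca{O}(t^2)$, so the directional derivative of $\A$ at $x$ along $d$ is $d$. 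You instead realize $d$ as the velocity of a $C^1$ curve $\gamma$ in $\M$ (valid under LICQ and the $C^1$ regularity of $c$ guaranteed by Assumption \ref{Assumption_1}, via the implicit function theorem) and differentiate the identity $\A\circ\gamma = \gamma$. Both proofs are correct. Your version is shorter and more transparently geometric, at the cost of importing the standard curve-realization fact and of needing to know that the algebraic tangent space $\mathrm{Null}(\Jc(x)\tp)$ coincides with the geometric one; the paper's version is quantitative and self-contained, reusing machinery (Lemmas \ref{Le_bound_cx} and \ref{Le_Ax_c}) it has already established and that it needs elsewhere anyway, which is presumably why the authors chose it.
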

	\begin{proof}
	Firstly, for any $d \in \bb{R}^n$, and any $d_1 \in \Nx$, from the fact that $\Ja(x)\Jc(x) = 0$, we can conclude that $\Ja(x) d_1 = 0$. Then we have that 
	$d_1\tp \Ja(x)\tp d = 0$ holds for any $d_1\in \Nx$, which implies that
	$\Ja(x)\tp d  \in \Nx^\perp = \Tx.$
	Thus we obtain that $\Ja(x)\tp d  \in \Tx$ holds for any $d \in \bb{R}^n$. 
	
	On the other hand, by the Taylor expansion of $c(x + td)$ up to the 
	second-order term, we have  $\norm{c(x+td)} \leq \Mc \norm{d}^2$ holds for any $d \in \Tx$. In addition, when $d \in \Tx$,
	we have that $x+td \in \Omegax{x}$ for $t \in (0,1)$ sufficiently small. Then by Lemma \ref{Le_Ax_c}, we get
	\begin{equation*}
		\norm{x+ td - \A(x+ td)} \leq \frac{ 2(\Ma  +1)}{\Lsc } \norm{c(x+ td)} \leq \frac{2(\Ma  + 1)\Mc }{\Lsc } \norm{d}^2 t^2. 
	\end{equation*}
	By Assumption \ref{Assumption_2}, $\A(x) = x$. Thus we have
	\begin{equation*}
		\begin{aligned}
			&\Ja(x)\tp d = \lim\limits_{t \to 0} \frac{\A(x+td) - \A(x)}{t} \\
			={}& \lim\limits_{t \to 0} \frac{\A(x+td)-(x+td ) }{t} + \lim\limits_{t \to 0} \frac{x+td - x}{t}= \lim\limits_{t \to 0} \frac{x+td - x}{t} = d,
		\end{aligned}
	\end{equation*}
	which completes the proof.
	\end{proof}

	\begin{lem}
		\label{Le_Ja_nullspace}
		For any given $x \in \M$, the equality 
		$\Ja(x)d = 0$ holds if and only if
		$d \in \Nx.$
	\end{lem}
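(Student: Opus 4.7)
The plan is to establish the two inclusions separately, leaning heavily on Lemma \ref{Le_Ja_identical} and on the last clause of Assumption \ref{Assumption_2}.

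The easy direction is $d \in \Nx \Rightarrow \Ja(x)d = 0$. By definition of $\Nx = \mathrm{range}(\Jc(x))$, any such $d$ can be written as $d = \Jc(x)\mu$ for some $\mu \in \bb{R}^p$. Then the third bullet of Assumption \ref{Assumption_2} gives $\Ja(x)\Jc(x) = 0$ at $x \in \M$, so $\Ja(x)d = \Ja(x)\Jc(x)\mu = 0$ immediately.

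For the reverse direction, I plan to identify the null space of $\Ja(x)$ via the standard orthogonal decomposition $\mathrm{Null}(\Ja(x)) = \mathrm{range}(\Ja(x)\tp)^{\perp}$. From Lemma \ref{Le_Ja_identical}, the map $d \mapsto \Ja(x)\tp d$ sends $\bb{R}^n$ into $\Tx$ (first clause) and fixes every vector of $\Tx$ (second clause). Consequently $\mathrm{range}(\Ja(x)\tp) = \Tx$: the inclusion $\subseteq$ is the first clause, while the inclusion $\supseteq$ is obtained by noting that each $d \in \Tx$ equals $\Ja(x)\tp d$ and hence lies in the range. Taking orthogonal complements then gives
\begin{equation*}
\mathrm{Null}(\Ja(x)) \;=\; \mathrm{range}(\Ja(x)\tp)^{\perp} \;=\; \Tx^{\perp} \;=\; \Nx,
\end{equation*}
which is precisely the desired reverse inclusion (and indeed also recovers the forward one).

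I do not foresee a serious obstacle here: once Lemma \ref{Le_Ja_identical} is in hand, the result is essentially a one-line rank--nullity argument. The only point that requires a bit of care is making sure that $\Ja(x)\tp$ is read as the honest differential of $\A$ at $x$ (the paper uses the transposed-Jacobian convention), so that applying Lemma \ref{Le_Ja_identical} is legitimate; after that, the proof is just the identity $\mathrm{Null}(B) = \mathrm{range}(B\tp)^{\perp}$ applied with $B = \Ja(x)$ and the characterization $\Tx^{\perp} = \Nx$ already recorded in the notation section.
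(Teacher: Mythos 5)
Your proposal is correct and follows essentially the same route as the paper: the easy direction is the identical computation $\Ja(x)\Jc(x)\mu = 0$, and your rank--nullity identity $\mathrm{Null}(\Ja(x)) = \mathrm{range}(\Ja(x)\tp)^{\perp} = \Tx^{\perp} = \Nx$ is just a packaged form of the paper's direct inner-product argument ($0 = d_1\tp \Ja(x) d_2 = d_1\tp d_2$ for all $d_1 \in \Tx$, via Lemma \ref{Le_Ja_identical}). No gaps.
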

	\begin{proof}
	
	For any  $d_2$ satisfying $\Ja(x)d_2 = 0$ and any $d_1 \in \Tx$, by Lemma \ref{Le_Ja_identical} we have
	$0 = d_1\tp \Ja(x) d_2 = d_1\tp d_2$,
	which implies that $d_2 \in \Nx$. 
	
	On the other hand, let $d_2\in\Nx$. Then $d_2 = \Jc(x) \eta$ for some $\eta \in \bb{R}^p$. By Assumption \ref{Assumption_2}, $\Ja(x) d_2 = \Ja(x) \Jc(x) \eta = 0$.  This completes the proof.
	\end{proof}
	
	\medskip
	Next we show the idempotence of $\Ja(x)$, i.e. $\Ja(x)^2 = \Ja(x)$ holds for any $x \in \M$.
	\begin{lem}
		\label{Le_Ja_idempotent}
		For any given $x \in \M$, it holds that
		$\Ja(x)^2 = \Ja(x)$.
	\end{lem}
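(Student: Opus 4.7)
The plan is to read Lemma \ref{Le_Ja_identical} as saying that $\Ja(x)\tp$ is an (oblique) projector onto $\Tx$: it sends every vector of $\bb{R}^n$ into $\Tx$, and it fixes every vector already in $\Tx$. Once this is made explicit, the idempotence of $\Ja(x)\tp$ is immediate, and transposing gives the claim for $\Ja(x)$ itself.

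Concretely, I would argue as follows. Fix an arbitrary $d \in \bb{R}^n$ and set $e := \Ja(x)\tp d$. The first conclusion of Lemma \ref{Le_Ja_identical} yields $e \in \Tx$. Applying the second conclusion of the same lemma, now with $e$ in the role of the tangent vector, gives $\Ja(x)\tp e = e$, i.e.,
\begin{equation*}
(\Ja(x)\tp)^{2}\, d \;=\; \Ja(x)\tp \bigl(\Ja(x)\tp d\bigr) \;=\; \Ja(x)\tp d.
\end{equation*}
Since $d$ was arbitrary, $(\Ja(x)\tp)^{2} = \Ja(x)\tp$. Taking transposes on both sides and using $\bigl(\Ja(x)\tp \Ja(x)\tp\bigr)\tp = \Ja(x)\,\Ja(x) = \Ja(x)^{2}$ delivers $\Ja(x)^{2} = \Ja(x)$, as desired.

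There is essentially no obstacle here; the only conceptual point is recognizing that the two clauses of Lemma \ref{Le_Ja_identical}---``maps $\bb{R}^n$ into $\Tx$'' together with ``acts as the identity on $\Tx$''---are exactly the defining property of an idempotent with range $\Tx$, so no further manipulation of $\Jc(x)$, of the manifold geometry, or of the second-order term $\DJa(x)$ is required.
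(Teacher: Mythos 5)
Your proof is correct, and it takes a genuinely different (and shorter) route than the paper's. You work with the transpose $\Ja(x)\tp$ and observe that the two clauses of Lemma \ref{Le_Ja_identical} already exhibit it as an idempotent with range in $\Tx$ acting as the identity on $\Tx$; composing and then transposing via $\bigl(\Ja(x)\tp \Ja(x)\tp\bigr)\tp = \Ja(x)^2$ finishes the argument using only that one lemma. The paper instead works directly with $\Ja(x)$ and the decomposition $\bb{R}^n = \Tx \oplus \Nx$: it takes $d_1 \in \Tx$, shows $\Ja(x)d_1 - d_1 \in \Nx$ (via Lemma \ref{Le_Ja_identical}), invokes Lemma \ref{Le_Ja_nullspace} to conclude that $\Ja(x)$ annihilates the normal component, and verifies $\Ja(x)^2(d_1+d_4) = \Ja(x)(d_1+d_4)$ for the two pieces separately. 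Your version buys economy---one lemma instead of two, no decomposition of the input vector---while the paper's version makes explicit how $\Ja(x)$ acts on tangent and normal directions, information it reuses elsewhere (e.g., in the proof of Theorem \ref{The_secondorder_equivalence}). Both are complete and rigorous.
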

	\begin{proof}
	For any $d_1 \in \Tx$, let $d_2 = \Ja(x)d_1 - d_1$. 
	Then for any $d_3 \in \Tx$, it follows from Lemma \ref{Le_Ja_identical} that $d_3\tp d_2 = d_3\tp \Ja(x)d_1 - d_3\tp d_1 = 0$, which further implies $d_2 \in \Nx$. 
	On the other hand, for any $d_4 \in \Nx$,
	it follows from Lemma \ref{Le_Ja_nullspace} that $\Ja(x)d_4 = 0$.
	
	Therefore, for any $d_1 \in \Tx$ and any $d_4 \in \Nx$, it holds that 
	\begin{equation*}
		\Ja(x)^2 (d_1 + d_4) = \Ja(x)^2 d_1 =  \Ja(x) d_1 + \Ja(x) d_2 = \Ja(x) d_1 = \Ja(x)(d_1 + d_4). 
	\end{equation*}
	Therefore, we complete the proof by recalling the arbitrariness of $d_1 \in \Tx$ and $d_4 \in \Nx$. 
	\end{proof}

	\subsection{Theoretical properties of \ref{Prob_Pen}}
	\label{Subsection_Theo_properties_CDF}
	This subsection investigates the relationships between \ref{Prob_Ori} and \ref{Prob_Pen} on their stationary points, {\L}ojasiewicz exponents and local minimizers. We start by presenting the explicit expression for the gradient and Hessian of \ref{Prob_Pen}. 
	\begin{prop}
		\label{Prop_gradient_h}
		The gradient of $h$ in \ref{Prob_Pen} can be expressed as 
		\begin{equation}
			\nabla h(x) = \Ja(x)\nabla f(\A(x)) + \beta \Jc(x) c(x). 
		\end{equation}
		Furthermore, under Assumption \ref{Assumption_4}, the Hessian of $g(x) := f(\A(x))$ can be expressed as
		\begin{equation}
			\label{Eq_Prop_gradient_h_0}
			\nabla^2 g(x) = \Ja(x)\nabla^2 f(\A(x)) \Ja(x)\tp + \DJa(x)[\nabla f(\A(x))].
		\end{equation}
		and the Hessian of $h(x)$ is
		\begin{equation}
			\nabla^2 h(x) = \nabla^2 g(x) + \beta \left(\Jc(x)\Jc(x)\tp + \DJc(x)[c(x)]\right). 
		\end{equation}
	\end{prop}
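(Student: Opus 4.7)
The proposition is a direct chain-rule calculation, so the plan is essentially to differentiate the definition $h(x) = f(\A(x)) + \frac{\beta}{2}\norm{c(x)}^2$ twice, being careful to track the transposed-Jacobian conventions used in the paper.

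First, I would obtain the gradient formula by treating $h$ as the sum of $g(x) = f(\A(x))$ and the penalty $\frac{\beta}{2}\norm{c(x)}^2$. Since $\Ja(x)$ is the \emph{transposed} Jacobian of $\A$, the standard chain rule gives $\nabla g(x) = \Ja(x)\nabla f(\A(x))$. For the penalty term, writing $\frac{\beta}{2}\norm{c(x)}^2 = \frac{\beta}{2}\sum_{i=1}^p c_i(x)^2$ and differentiating coordinate-wise yields $\beta\sum_i c_i(x)\nabla c_i(x) = \beta \Jc(x)c(x)$. Summing these two contributions gives the claimed expression for $\nabla h(x)$.

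Next, under Assumption \ref{Assumption_4}, I would differentiate $\nabla g(x) = \Ja(x)\nabla f(\A(x))$ once more to obtain the Hessian of $g$. Applying the product rule to the two $x$-dependent factors produces two terms: differentiating $\nabla f(\A(x))$ gives $\Ja(x)\nabla^2 f(\A(x))\Ja(x)\tp$ via the chain rule (the outer $\Ja(x)$ stays, while the inner differentiation of $\A(x)$ contributes the extra $\Ja(x)\tp$), and differentiating $\Ja(x)$ against the fixed vector $\nabla f(\A(x))$ produces precisely $\DJa(x)[\nabla f(\A(x))]$ by the definition of $\DJa$ given in the notation section. An analogous argument on the penalty term, viewing $\Jc(x)c(x)$ as a product, yields $\Jc(x)\Jc(x)\tp$ from differentiating $c(x)$ and $\DJc(x)[c(x)]$ from differentiating $\Jc(x)$ against the fixed vector $c(x)$; multiplying by $\beta$ and adding to $\nabla^2 g(x)$ gives the Hessian of $h$.

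No step here is genuinely hard; the only thing to be careful about is the bookkeeping between Jacobians and transposed Jacobians, so that the orders of multiplication in $\Ja(x)\nabla^2 f(\A(x))\Ja(x)\tp$ and $\Jc(x)\Jc(x)\tp$ match the conventions fixed by the explicit matrix displays for $\Jc$ and $\Ja$ in the notation section. Because these are purely symbolic identities valid for every $x \in \bb{R}^n$ under Assumption \ref{Assumption_4}, no additional property of $\A$ from Assumption \ref{Assumption_2} is needed at this stage, and the proof reduces to invoking the chain and product rules and matching the resulting expressions with the definitions of $\DJa$ and $\DJc$.
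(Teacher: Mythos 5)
Your computation is correct and is exactly the ``straightforward calculation'' the paper alludes to when it omits the proof: the gradient follows from the chain rule with the transposed-Jacobian convention, and both Hessian formulas follow from the product rule together with the definitions of $\DJa$ and $\DJc$. Nothing further is needed.
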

	The statements of Proposition \ref{Prop_gradient_h} can be verified by straightforward calculations, and hence its proof is omitted. 
	
	\subsubsection{Relationships on stationary points}
	\label{Subsubsection_Relationship_stationary}
	This subsection shows that \ref{Prob_Ori} and \ref{Prob_Pen} share the same first-order and second-order stationary points in a neighborhood
	of $\M$.   
	Moreover, for any given $x \in \M$ and $\beta \geq \beta_{x}$, we can prove that all first-order stationary points of \ref{Prob_Pen} in $\BOmegax{x}$ are feasible.
	For a more concise presentation, we put all the proofs for  Proposition \ref{Prop_Firstorder_Equivalence_feasible}, Theorem \ref{The_firstorder_equivalence} and Theorem \ref{The_secondorder_equivalence} in Appendix \ref{Section_proofs_321}.
	
	\begin{prop}	
		\label{Prop_Firstorder_Equivalence_feasible}
		Any first-order stationary point of \ref{Prob_Ori} is a first-order stationary point of \ref{Prob_Pen}. On the other hand, any first-order stationary point of \ref{Prob_Pen} over $\M$ is a first-order stationary point of \ref{Prob_Ori}. 
	\end{prop}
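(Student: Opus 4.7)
The plan is to exploit the explicit gradient formula from Proposition \ref{Prop_gradient_h} together with the structural properties of $\Ja(x)$ on $\M$ that are established in Lemmas \ref{Le_Ja_identical} and \ref{Le_Ja_nullspace}. Since both implications concern points $x \in \M$, the analysis reduces to a linear-algebra statement about decomposing $\nabla f(x)$ into tangential and normal components, which should fall out almost immediately from what has been proved.

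For the first direction, suppose $x$ is a first-order stationary point of \ref{Prob_Ori}. Then $c(x) = 0$ and, by \eqref{add:2}, $\nabla f(x) = \Jc(x)\lambda(x)$. Using Proposition \ref{Prop_gradient_h} and the fact that $\A(x) = x$ on $\M$ (Assumption \ref{Assumption_2}), the expression for $\nabla h(x)$ collapses to $\Ja(x)\nabla f(x) = \Ja(x)\Jc(x)\lambda(x)$, and the third part of Assumption \ref{Assumption_2} immediately yields $\nabla h(x) = 0$. This direction is essentially a one-line computation.

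For the reverse direction, suppose $x \in \M$ satisfies $\nabla h(x) = 0$. The feasibility $c(x) = 0$ holds by assumption, so only the multiplier equation in Definition \ref{Defin_FOSP} needs to be verified. Plugging $x \in \M$ into the formula in Proposition \ref{Prop_gradient_h} gives $\Ja(x)\nabla f(x) = 0$. By Lemma \ref{Le_Ja_nullspace}, this is equivalent to $\nabla f(x) \in \Nx = \mathrm{range}(\Jc(x))$, and choosing $\tilde{\lambda}$ so that $\nabla f(x) = \Jc(x)\tilde{\lambda}$ closes the argument.

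I expect no real obstacle: the proposition is a direct bookkeeping consequence of the gradient formula in Proposition \ref{Prop_gradient_h} combined with the range/null-space characterization of $\Ja(x)$ on $\M$ from Lemmas \ref{Le_Ja_identical}--\ref{Le_Ja_nullspace}. The only mild care needed is to invoke the correct half of Assumption \ref{Assumption_2} in each direction, namely $\Ja(x)\Jc(x) = 0$ for the forward implication and (implicitly, through Lemma \ref{Le_Ja_nullspace}) the same identity for the converse.
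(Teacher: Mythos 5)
Your proposal is correct and follows essentially the same route as the paper's own proof: the forward direction collapses $\nabla h(x)$ to $\Ja(x)\Jc(x)\lambda(x)=0$ via \eqref{add:2} and Assumption \ref{Assumption_2}, and the converse uses Lemma \ref{Le_Ja_nullspace} to place $\nabla f(x)$ in $\mathrm{range}(\Jc(x))$. No gaps.
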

	
		\begin{theo}
			
			\label{The_firstorder_equivalence}
			For any given $x \in \M$, suppose $\beta \geq \beta_{x}$, then the following inequality holds for any $\y \in \Omegax{x}$,
			\begin{equation}
				\norm{\nabla h(\y)} \geq \frac{\beta\Lsc}{8(\Ma +1)} \norm{c(\y)}. 
			\end{equation}
			Moreover, any first-order stationary point of \ref{Prob_Pen} in $\Omegax{x}$ is a first-order stationary point of \ref{Prob_Ori}. 
		\end{theo}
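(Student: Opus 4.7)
The plan is to prove the quantitative lower bound $\norm{\nabla h(\y)} \geq \frac{\beta \Lsc}{8(\Ma+1)} \norm{c(\y)}$ for $\y \in \Omegax{x}$ first, and then derive the stationarity assertion as an immediate corollary. By Proposition \ref{Prop_gradient_h}, $\nabla h(\y) = \Ja(\y)\nabla f(\A(\y)) + \beta \Jc(\y) c(\y)$. The penalty summand has norm at least $\beta(\Lsc/2)\norm{c(\y)}$ on $\Omegax{x} \subset \Theta_x$, while the first summand is only bounded by $\Ma \Lf$ and does not vanish with $\norm{c(\y)}$. A naive triangle inequality is therefore insufficient; the two parts live, up to leading order, in complementary subspaces and must be separated by a projection.

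The key device is the auxiliary operator $P := I - \Ja(z)$, where $z$ is any point of $\mathrm{proj}(\y, \M)$ (which lies in $\Theta_x$ by the choice of $\varepsilon_x$). Lemmas \ref{Le_Ja_nullspace} and \ref{Le_Ja_idempotent} together say that $\Ja(z)$ is idempotent with kernel $\mathcal{N}_z$, so $P$ is the oblique projection onto $\mathcal{N}_z$ along $\Rspace(\Ja(z))$, and satisfies $\norm{P} \leq \Ma + 1$. I would then estimate $\norm{P\nabla h(\y)}$ from below and use $\norm{\nabla h(\y)} \geq \norm{P\nabla h(\y)}/(\Ma+1)$.

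For $P\nabla h(\y) = P\Ja(\y)\nabla f(\A(\y)) + \beta P\Jc(\y) c(\y)$, the identity $P\Ja(z) = \Ja(z) - \Ja(z)^2 = 0$ combined with Lipschitz continuity of $\nabla g(\cdot) = \Ja(\cdot)\nabla f(\A(\cdot))$ and Lemma \ref{Le_bound_cx} gives
\begin{equation*}
\norm{P\Ja(\y)\nabla f(\A(\y))} = \norm{P(\nabla g(\y) - \nabla g(z))} \leq \frac{2(\Ma+1)\Lg}{\Lsc}\norm{c(\y)}.
\end{equation*}
For the penalty part, $\Ja(z)\Jc(z) = 0$ (Assumption \ref{Assumption_2}) and Lipschitz continuity of $\Jc$ yield $\Ja(z)\Jc(\y) = \Ja(z)(\Jc(\y) - \Jc(z))$, so
\begin{equation*}
\norm{P\Jc(\y) c(\y)} \geq \norm{\Jc(\y) c(\y)} - \frac{2\Ma\Lc}{\Lsc}\norm{c(\y)}^2 \geq \frac{\Lsc}{2}\norm{c(\y)} - \frac{2\Ma\Lc}{\Lsc}\norm{c(\y)}^2.
\end{equation*}
The smallness of $\norm{c(\y)}$ on $\Omegax{x}$ (ensured by the bound $\varepsilon_x \leq \Lsc^2/(8\Lac \Mc)$ in the definition of $\varepsilon_x$, which also controls $\mathrm{dist}(\y,\M)$ via Lemma \ref{Le_bound_cx}) absorbs the quadratic term, while the choice $\beta \geq \beta_x$ is calibrated so that its first component $128\Lg(\Ma+1)^2/\Lsc^2$ dominates the linear $\Lg$-term above. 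Combining these estimates leaves $\norm{P\nabla h(\y)} \geq (\beta\Lsc/8)\norm{c(\y)}$, and dividing by $\Ma+1$ yields the claim.

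The main obstacle is the careful bookkeeping of the higher-order corrections so that the clean factor $1/8$ (rather than some unwieldy constant) emerges; this is the reason for the somewhat intricate definition of $\beta_x$, whose three components individually dominate the $\Lg$, $\Lac$, and $\La$ contributions produced along the way. Once the inequality is established, the second assertion is immediate: a first-order stationary point $\y \in \Omegax{x}$ of \ref{Prob_Pen} satisfies $\nabla h(\y) = 0$, forcing $c(\y) = 0$ and hence $\y \in \M$, so Proposition \ref{Prop_Firstorder_Equivalence_feasible} concludes that $\y$ is a first-order stationary point of \ref{Prob_Ori}.
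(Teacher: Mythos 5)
Your proposal is correct and is essentially the paper's own argument: both hinge on applying the annihilator $I - \Ja(\cdot)$ to $\nabla h(\y)$, using the idempotence and kernel structure of $\Ja$ at feasible points (Lemmas \ref{Le_Ja_nullspace}--\ref{Le_Ja_idempotent}) to reduce the $\nabla g$ contribution to $\ca{O}(\mathrm{dist}(\y,\M))$ while the penalty term retains a $\frac{\Lsc}{4}\norm{c(\y)}$ lower bound, after which $\beta \geq \beta_x$ and division by $\Ma+1$ give the claim and Proposition \ref{Prop_Firstorder_Equivalence_feasible} finishes the stationarity assertion. The only (harmless) deviations are that you anchor the projector at $z \in \mathrm{proj}(\y,\M)$ rather than at $\y$, which trades the paper's $\Lf\La(2\Ma+1)$ and $\Lac$ error terms for $\Lg(\Ma+1)$ and $\Ma\Lc$ terms — both covered by the definitions of $\beta_x$ and $\varepsilon_x$ — and that the component of $\varepsilon_x$ actually absorbing your quadratic $\Lc$-correction is $\frac{\Lsc}{32\Lc(\Ma+1)}$ (via $\mathrm{dist}(\y,\M)\leq\varepsilon_x$), not the $\frac{\Lsc^2}{8\Lac\Mc}$ component you cite.
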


		\medskip
		\begin{theo}
			
			\label{The_secondorder_equivalence}
			Suppose Assumption \ref{Assumption_4} holds. Any second-order stationary point of \ref{Prob_Ori} is a second-order stationary point of \ref{Prob_Pen}. 
			On the other hand, for any given $x \in \M$, if $\beta \geq \beta_{x}$ and $y\in\Omega_x$ 
			is a second-order stationary point of \ref{Prob_Pen}, then $y$ is a second-order stationary point of \ref{Prob_Ori}. 
		\end{theo}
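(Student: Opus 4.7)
The plan is to reduce both statements to a single identity for the Hessian quadratic form $d\tp \nabla^2 h(x) d$ at a feasible first-order stationary point of \ref{Prob_Ori}; the forward implication uses it at every $d \in \bb{R}^n$, while the reverse implication needs it only for tangent directions. Fix $x \in \M$ with $\nabla f(x) = \Jc(x)\lambda(x)$. By Proposition \ref{Prop_Firstorder_Equivalence_feasible} we also have $\nabla h(x) = 0$, and since $c(x) = 0$ the Hessian formula in Proposition \ref{Prop_gradient_h} simplifies to $\nabla^2 h(x) = \Ja(x)\nabla^2 f(x)\Ja(x)\tp + \DJa(x)[\nabla f(x)] + \beta\, \Jc(x)\Jc(x)\tp$. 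Writing $v := \Ja(x)\tp d \in \Tx$ (Lemma \ref{Le_Ja_identical}), the target identity is
\begin{equation*}
d\tp \nabla^2 h(x) d = v\tp \hess f(x)\, v + \lambda(x)\tp (c\circ\A)''(x)[d,d] + \beta \norm{\Jc(x)\tp d}^2,
\end{equation*}
where $(c\circ\A)''(x)[d,d] \in \bb{R}^p$ denotes the vector whose $i$-th entry is $d\tp \nabla^2(c_i \circ \A)(x)\, d$. I would derive it via the chain-rule identity $(c\circ\A)''(x)[d,d] = \Jc(x)\tp D^2\A(x)[d,d] + c''(x)[v,v]$: contracting with $\lambda(x)$ turns the first piece into $d\tp \DJa(x)[\Jc(x)\lambda(x)] d = d\tp \DJa(x)[\nabla f(x)] d$ by linearity of $\DJa$ in its argument, and the residual $v\tp\nabla^2 f(x) v - \lambda(x)\tp c''(x)[v,v]$ equals $v\tp \hess f(x)v$ via Proposition \ref{Prop_FOSP_Rie}.

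For the forward direction, take $x$ a second-order stationary point of \ref{Prob_Ori}. In the identity, the first summand is nonnegative because $v \in \Tx$ and $\hess f(x) \succeq 0$, and the third summand is manifestly nonnegative. The middle summand is controlled by upgrading Lemma \ref{Le_A_secondorder_descrease} into a homogeneous quadratic bound: substituting $y = x + t d$ into $\norm{c(\A(y))} \leq (4\Lac/\Lsc^2)\norm{c(y)}^2$ and using the Taylor expansions $c(x+td) = t\Jc(x)\tp d + O(t^2)$ and $c(\A(x+td)) = \tfrac{t^2}{2}(c\circ\A)''(x)[d,d] + O(t^3)$ (the linear-in-$t$ part of the second expansion vanishes because $\Ja(x)\Jc(x) = 0$), then dividing by $t^2$ and letting $t \to 0$, gives $\norm{(c\circ\A)''(x)[d,d]} \leq (8\Lac/\Lsc^2)\norm{\Jc(x)\tp d}^2$. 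Since $\norm{\lambda(x)} \leq \Lf/\Lsc$, this yields $|\lambda(x)\tp (c\circ\A)''(x)[d,d]| \leq (8\Lac\Lf/\Lsc^3)\norm{\Jc(x)\tp d}^2$, which is dominated by $\beta\norm{\Jc(x)\tp d}^2$ once $\beta \geq \beta_x$ (the definition of $\beta_x$ majorizes $8\Lac\Lf/\Lsc^3$). Hence $\nabla^2 h(x) \succeq 0$.

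For the reverse direction, let $y \in \Omegax{x}$ be a second-order stationary point of \ref{Prob_Pen} with $\beta \geq \beta_x$. Theorem \ref{The_firstorder_equivalence} places $y \in \M$ and gives $\nabla f(y) = \Jc(y)\lambda(y)$, so the identity above applies at $y$. For any $d$ in the tangent space at $y$ (equivalently $\Jc(y)\tp d = 0$), Lemma \ref{Le_Ja_identical} yields $\Ja(y)\tp d = d$, and since every smooth curve $\gamma \subset \M$ with $\gamma(0) = y,\, \dot\gamma(0) = d$ satisfies $c(\A(\gamma(t))) = c(\gamma(t)) \equiv 0$, differentiating twice at $t = 0$ forces $(c\circ\A)''(y)[d,d] = 0$. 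The identity collapses to $d\tp \nabla^2 h(y) d = d\tp \hess f(y) d$, so $\nabla^2 h(y) \succeq 0$ forces $\hess f(y) \succeq 0$ on the tangent space, and Proposition \ref{Prop_SOSP_Rie} concludes. The main obstacle is the forward direction: the algebraic distillation of the Riemannian Hessian $\hess f(x)$ from the mixed expression $\Ja(x)\nabla^2 f(x)\Ja(x)\tp + \DJa(x)[\nabla f(x)]$, together with the quantitative control of $(c\circ\A)''(x)[d,d]$ by $\norm{\Jc(x)\tp d}^2$ furnished by Lemma \ref{Le_A_secondorder_descrease}; once the identity is in place, the reverse direction is essentially automatic because tangency simultaneously annihilates $\Jc(y)\tp d$ and $(c\circ\A)''(y)[d,d]$.
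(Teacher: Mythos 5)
Your proof is correct, and the forward direction takes a genuinely different route from the paper's. The paper proves the implication ``second-order stationary for \ref{Prob_Ori} $\Rightarrow$ second-order stationary for \ref{Prob_Pen}'' by splitting an arbitrary direction as $d_1+d_2$ with $d_1\in\Tx$, $d_2\in\Nx$, and checking positive semidefiniteness block by block: the tangent block is handled by a lemma showing $\nabla^2 h(x)d_1=\Ja(x)\big(\nabla^2 f(x)-\sum_i\lambda_i(x)\nabla^2 c_i(x)\big)\Ja(x)\tp d_1$, the normal block is dominated via the chain $\beta\Lsc^2-\Lg\geq\Lg\Ma^2\geq\lambda_{\max}(\ca{H}(x))\norm{\Ja(x)}^2$ (using the $128\Lg(\Ma+1)^2/\Lsc^2$ term in $\beta_x$), and the cross terms are absorbed by the nonnegativity of the full quadratic form along $\Ja(x)\tp(d_1+d_2)\in\Tx$. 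You instead establish a single global identity $d\tp\nabla^2 h(x)d=v\tp\hess f(x)\,v+\lambda(x)\tp(c\circ\A)''(x)[d,d]+\beta\norm{\Jc(x)\tp d}^2$ with $v=\Ja(x)\tp d$, and control the correction term by the homogeneous quadratic bound $\norm{(c\circ\A)''(x)[d,d]}\leq(8\Lac/\Lsc^2)\norm{\Jc(x)\tp d}^2$, extracted from Lemma \ref{Le_A_secondorder_descrease} by a Taylor/limit argument; this is in fact the natural extension (to non-tangent $d$) of the trick the paper uses inside Lemma \ref{Le_esti_hess_A} to show $\nabla^2(c_i\circ\A)(x)d=0$ for $d\in\Tx$. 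Your route buys a more transparent quantitative statement ($\nabla^2 h(x)$ equals a PSD tangent part plus the penalty part up to a correction of size $8\Lac\Lf/\Lsc^3$ relative to $\norm{\Jc(x)\tp d}^2$, covered by the $64\Lf(\Ma+1)\Lac/\Lsc^3$ term in $\beta_x$) and avoids the block decomposition entirely; the paper's route avoids introducing $(c\circ\A)''$ and reuses its Lemma \ref{Le_Hessian_tangent} machinery. Your reverse direction coincides in substance with the paper's: restriction to $\Tx$, $\Ja(y)\tp d=d$, and the vanishing of the middle term (which, as you note, you could also get from your own quadratic bound with $\Jc(y)\tp d=0$, avoiding the curve argument). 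One cosmetic remark: like the paper's own proof, your forward direction uses $\beta\geq\beta_x$ even though the theorem statement attaches that hypothesis only to the converse, so you are consistent with what is actually proved.
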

		
		Based on Theorem \ref{The_secondorder_equivalence}, the following corollary illustrates that \ref{Prob_Ori} and \ref{Prob_Pen} have the same strict saddle points locally. The proof for Corollary \ref{Coro_saddle_equivalence} directly follows from Theorem \ref{The_firstorder_equivalence} and Theorem \ref{The_secondorder_equivalence}, hence we omit it for simplicity. 
		\begin{coro}
			\label{Coro_saddle_equivalence}
			Suppose Assumption \ref{Assumption_4} holds. Any strict saddle point of \ref{Prob_Ori} is a strict saddle point of \ref{Prob_Pen}. 
			On the other hand, for any given $x \in \M$, if $\beta \geq \beta_{x}$ and $y\in\Omega_x$ is a strict saddle point of \ref{Prob_Pen}, then $y$ is a strict saddle point of \ref{Prob_Ori}. 
		\end{coro}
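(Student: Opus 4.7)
The plan is to derive Corollary \ref{Coro_saddle_equivalence} as a direct logical consequence of the characterization ``strict saddle = first-order stationary but not second-order stationary,'' combined with the equivalences already established in Proposition \ref{Prop_Firstorder_Equivalence_feasible}, Theorem \ref{The_firstorder_equivalence}, and Theorem \ref{The_secondorder_equivalence}. No new analytic identities are needed; the argument is a two-way bookkeeping exercise in each direction.

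For the forward implication, I start from a strict saddle point $x$ of \ref{Prob_Ori}, which by Definition \ref{Defin_Saddles} is a first-order stationary point of \ref{Prob_Ori} with $\lambda_{\min}(\hess f(x)) < 0$. Proposition \ref{Prop_Firstorder_Equivalence_feasible} immediately transfers the first-order stationarity, showing $x$ is a first-order stationary point of \ref{Prob_Pen}. To promote the negative-curvature condition from \ref{Prob_Ori} to \ref{Prob_Pen}, I invoke the contrapositive of the second (converse) assertion of Theorem \ref{The_secondorder_equivalence}, using the base point $x$ itself, so that $x\in\M$ and trivially $x \in \Omega_x$. Since $\lambda_{\min}(\hess f(x)) < 0$ means $x$ is not a second-order stationary point of \ref{Prob_Ori}, the contrapositive forces $x$ to fail to be a second-order stationary point of \ref{Prob_Pen}, i.e. $\lambda_{\min}(\nabla^2 h(x)) < 0$. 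Combining the two facts yields that $x$ is a strict saddle of \ref{Prob_Pen}.

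For the reverse implication, I start from $x \in \M$, $\beta \geq \beta_x$, and $y \in \Omega_x$ a strict saddle of \ref{Prob_Pen}. By Theorem \ref{The_firstorder_equivalence}, first-order stationarity of $h$ on $\Omega_x$ implies first-order stationarity of $f$ subject to $c(\cdot)=0$; in particular $y \in \M$. Next, I argue by contradiction for the second-order part: if $y$ were a second-order stationary point of \ref{Prob_Ori}, the forward (unconditional) direction of Theorem \ref{The_secondorder_equivalence} would upgrade this to a second-order stationary point of \ref{Prob_Pen}, contradicting $\lambda_{\min}(\nabla^2 h(y)) < 0$. Therefore $\lambda_{\min}(\hess f(y))<0$, and $y$ is a strict saddle of \ref{Prob_Ori}.

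There is no real obstacle here. The only care required is the hypothesis-checking inside each direction: verifying that $x \in \Omega_x$ when applying the converse of Theorem \ref{The_secondorder_equivalence} at the point $x$ itself in the forward direction (this is trivial as $\Omega_x$ is a neighbourhood of $x$), and ensuring that the $\beta \geq \beta_x$ hypothesis is available whenever the converse direction of either theorem is invoked. Since both Theorem \ref{The_firstorder_equivalence} and the converse part of Theorem \ref{The_secondorder_equivalence} already carry this hypothesis, no additional assumption on $\beta$ is needed beyond what is stated in Corollary \ref{Coro_saddle_equivalence} itself.
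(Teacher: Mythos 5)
Your proposal matches the paper's intent exactly: the authors omit the proof precisely because it is the bookkeeping exercise you describe, combining the first-order equivalences with Theorem \ref{The_secondorder_equivalence} under the characterization of a strict saddle as a first-order stationary point with a negative Hessian eigenvalue. Your reverse direction is correct as written.

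There is one wrinkle in your forward direction. The first sentence of Corollary \ref{Coro_saddle_equivalence} is stated \emph{without} the hypothesis $\beta \geq \beta_x$, mirroring the unconditional forward sentence of Theorem \ref{The_secondorder_equivalence}. But you establish $\lambda_{\min}(\nabla^2 h(x)) < 0$ by taking the contrapositive of the \emph{converse} part of Theorem \ref{The_secondorder_equivalence}, which carries the hypothesis $\beta \geq \beta_x$; your closing remark that no assumption on $\beta$ is needed ``beyond what is stated in the corollary'' therefore does not cover the forward sentence, which states none. The dependence on $\beta$ is avoidable: since $x$ is a first-order stationary point of \ref{Prob_Ori}, $\grad f(x) = 0$, and Lemmas \ref{Le_Ja_identical} and \ref{Le_Hessian_tangent} give $d\tp \nabla^2 h(x)\, d = d\tp \hess f(x)\, d$ for every $d \in \Tx$, whence
\begin{equation*}
\lambda_{\min}(\nabla^2 h(x)) \;\leq\; \min_{d \in \Tx,\, \norm{d}=1} d\tp \nabla^2 h(x)\, d \;=\; \lambda_{\min}(\hess f(x)) \;<\; 0
\end{equation*}
with no condition on $\beta$. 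Substituting this one line for your appeal to the converse of Theorem \ref{The_secondorder_equivalence} makes the forward direction match the unconditional statement.
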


	\subsubsection{Stationarity at infeasible points}
	\label{Subsubsection_Estimation_stationarity}
	From Lemma \ref{Le_A_secondorder_descrease},
	we know that the constraint dissolving operator $\A$ can quadratically reduce
	the feasibility violation of any infeasible point $x \in \bar{\Omega}$. Together with  Lemma \ref{Le_dist_Ainfty}, 
	$\A^{\infty}(x)$ is feasible and it is in a neighborhood of $x$.
	The relationships  between $x$ and $\A^{\infty}(x)$ in terms of the function values and derivatives are of great importance in characterizing the properties of 
	\ref{Prob_Pen} at those infeasible points.

	The detailed proofs of Proposition \ref{Prop_postprocessing} -- Proposition \ref{Prop_esti_lambda_min} are presented in Appendix \ref{Section_proofs_322}.
	\begin{prop}	
		\label{Prop_postprocessing}
		For any given $x \in \M$, suppose $\beta \geq \beta_{x}$, then the following inequalities hold for any $\y \in \BOmegax{x}$
		\begin{align}
			&h(\A(\y)) - h(\y) \leq - \frac{\beta}{8}\norm{c(\y)}^2,\\
			&h(\A^{\infty}(\y)) - h(\y)\leq  - \frac{\beta}{4}\norm{c(\y)}^2.
		\end{align}
	\end{prop}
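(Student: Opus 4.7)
The plan is to decompose each difference into an ``$f$-change'' and a ``penalty change,'' and then exploit the near-idempotency of $\A$ on $\M$ to show the $f$-change is $O(\norm{c(y)}^2)$ with a small absolute constant, while the penalty change is dominated by a large negative term $-\frac{\beta}{2}\norm{c(y)}^2$ once we know that $\norm{c(\A(y))}^2$ is $O(\norm{c(y)}^4)$. For the first inequality, I write
\begin{equation*}
h(\A(y)) - h(y) = \bigl(f(\A^2(y)) - f(\A(y))\bigr) + \tfrac{\beta}{2}\bigl(\norm{c(\A(y))}^2 - \norm{c(y)}^2\bigr).
\end{equation*}
Lemma \ref{Le_A_secondorder_descrease} gives $\norm{c(\A(y))} \leq \tfrac{4\Lac}{\Lsc^2}\norm{c(y)}^2$, and the design of $\varepsilon_x$ (in particular $\varepsilon_x \leq \Lsc^2/(8\Lac \Mc)$) together with $\norm{c(y)} \leq \Mc \norm{y - x}$ is exactly what forces $\tfrac{4\Lac}{\Lsc^2}\norm{c(y)} \leq \tfrac{1}{2}$, so that $\norm{c(\A(y))}^2 \leq \tfrac{1}{4}\norm{c(y)}^2$ and the penalty contribution is at most $-\tfrac{3\beta}{8}\norm{c(y)}^2$. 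For the $f$-change, I observe that $\A(y) \in \Omegax{x}$ (a short check using the definition of $\BOmegax{x}$), apply Lemma \ref{Le_Ax_c} at the base point $\A(y)$, and combine it with Lemma \ref{Le_A_secondorder_descrease} to get
\begin{equation*}
\norm{\A^2(y) - \A(y)} \leq \tfrac{2(\Ma+1)}{\Lsc}\norm{c(\A(y))} \leq \tfrac{8\Lac(\Ma+1)}{\Lsc^3}\norm{c(y)}^2,
\end{equation*}
whence the mean-value bound yields $|f(\A^2(y)) - f(\A(y))| \leq \tfrac{8\Lf\Lac(\Ma+1)}{\Lsc^3}\norm{c(y)}^2$. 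The component $\beta_x \geq \tfrac{64\Lf(\Ma+1)\Lac}{\Lsc^3}$ is chosen precisely so that this $f$-term is absorbed into $\tfrac{\beta}{8}\norm{c(y)}^2$, and combining with the penalty estimate produces the claim.

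For the second inequality, the key simplification is that $\A^\infty(y) \in \M$ gives both $c(\A^\infty(y)) = 0$ and $\A(\A^\infty(y)) = \A^\infty(y)$, which collapses the identity to
\begin{equation*}
h(\A^\infty(y)) - h(y) = \bigl(f(\A^\infty(y)) - f(\A(y))\bigr) - \tfrac{\beta}{2}\norm{c(y)}^2,
\end{equation*}
so it suffices to show that the $f$-change is at most $\tfrac{\beta}{4}\norm{c(y)}^2$ in absolute value. I would bound $\norm{\A^\infty(y) - \A(y)}$ by telescoping $\A^\infty(y) - \A(y) = \sum_{k \geq 1}(\A^{k+1}(y) - \A^k(y))$, controlling each summand by Lemma \ref{Le_Ax_c} in terms of $a_k := \norm{c(\A^k(y))}$, and iterating the superlinear contraction $a_{k+1} \leq \tfrac{4\Lac}{\Lsc^2}a_k^2$ from Lemma \ref{Le_A_secondorder_descrease}. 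Because $\tfrac{4\Lac}{\Lsc^2}\norm{c(y)} \leq \tfrac{1}{2}$, the resulting doubly-exponentially-decreasing tail is summable, giving $\sum_{k \geq 1} a_k \leq \tfrac{8\Lac}{\Lsc^2}\norm{c(y)}^2$ and therefore $\norm{\A^\infty(y) - \A(y)} \leq \tfrac{16\Lac(\Ma+1)}{\Lsc^3}\norm{c(y)}^2$. Hence $|f(\A^\infty(y)) - f(\A(y))| \leq \tfrac{16\Lf\Lac(\Ma+1)}{\Lsc^3}\norm{c(y)}^2$, and the same $\beta \geq \beta_x$ bound absorbs this into $\tfrac{\beta}{4}\norm{c(y)}^2$, closing the argument.

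The main technical obstacle is uniform control of the orbit $\{\A^k(y)\}_{k \geq 0}$: I need every iterate to sit inside $\Omegax{x}$ so that the constants $\Lsc, \Mc, \Ma, \Lac$ keep governing the estimates, and I need the recursion $a_{k+1} \leq \tfrac{4\Lac}{\Lsc^2} a_k^2$ to be a strict quadratic contraction rather than merely a pointwise inequality. Both requirements hinge on the geometric smallness encoded in $\BOmegax{x}$ and $\varepsilon_x$, and both are essentially the content of Lemma \ref{Le_dist_Ainfty}, which I would invoke at the points where orbit-wide boundedness or summability of the displacements is needed. Once that is in place, the rest of the proof is a careful bookkeeping of constants against the definition of $\beta_x$.
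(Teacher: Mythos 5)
Your proof of the first inequality is correct and is essentially the paper's own argument: the same decomposition into an $f$-change plus a penalty change, the same use of Lemmas \ref{Le_Ax_c} and \ref{Le_A_secondorder_descrease} to get $|f(\A^2(\y))-f(\A(\y))| \leq \tfrac{8\Lf(\Ma+1)\Lac}{\Lsc^3}\norm{c(\y)}^2$, and the same absorption of this term via the component $\tfrac{64\Lf(\Ma+1)\Lac}{\Lsc^3}$ of $\beta_x$. For the second inequality, however, you take a genuinely different route. The paper telescopes the one-step decrease along the orbit, writing $h(\A^{\infty}(\y))-h(\y)=\sum_{i\geq 0}\bigl(h(\A^{i+1}(\y))-h(\A^{i}(\y))\bigr)$ and bounding each summand by the first inequality applied at $\A^i(\y)$; you instead use the collapse $h(\A^{\infty}(\y))=f(\A^{\infty}(\y))$ (since $c(\A^{\infty}(\y))=0$ and $\A$ fixes $\M$), which exposes the full $-\tfrac{\beta}{2}\norm{c(\y)}^2$ at once and reduces the problem to bounding the single remainder $|f(\A^{\infty}(\y))-f(\A(\y))|$ via the summable, quadratically contracting displacements $\norm{\A^{k+1}(\y)-\A^k(\y)}$. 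Your route has a concrete advantage in constant bookkeeping: the paper's telescoping, as literally written with the one-step constant $-\tfrac{\beta}{8}$, only yields $-\tfrac{\beta}{8}\norm{c(\y)}^2$ (since $\sum_i\norm{c(\A^i(\y))}^2$ is merely bounded below by its first term), so reaching $-\tfrac{\beta}{4}$ requires silently using the sharper one-step decrease $-\tfrac{\beta}{4}\norm{c(\y)}^2$ available from the $-\tfrac{3\beta}{8}+\tfrac{\beta}{8}$ computation; your direct identity sidesteps this entirely. The price you pay is the same as the paper's implicit one: you must know that every iterate $\A^k(\y)$ stays in $\Omegax{x}$ so the local constants apply, and you correctly identify Lemma \ref{Le_dist_Ainfty} as supplying exactly that orbit control. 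Both arguments are sound; yours is slightly more self-contained on the second claim.
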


	When we invoke a specific unconstrained optimization algorithm to solve \ref{Prob_Pen}, we usually terminate the algorithm once the stopping criterion reaches a certain tolerance, meanwhile the feasibility violation at the returned solution $x$ may not be sufficiently small. To pursue a
	solution with high feasibility accuracy,
	certain post-processing step should be imposed. As
	we have mentioned,  $\A^{\infty}(x)$ is feasible if $x$ is
	sufficiently close to $\M$. Thus we can recursively compute
	$x \leftarrow \A(x)$ until the desired accuracy on feasibility is satisfied.

	\begin{prop}
		
		\label{Prop_upperbound_projection_gradient}
		For any given $x \in \M$, suppose $\beta \geq \beta_{x}$, then it holds that 
		\begin{equation}
			\norm{\nabla h(\y)} \geq \frac{1}{2}\norm{\grad f(\A^{\infty}(\y))}, \qquad \text{for all } y \in \BOmegax{x}.
		\end{equation}
		
	\end{prop}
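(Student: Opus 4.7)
The plan is to exploit the inner-product structure via Cauchy--Schwarz:
\[
\norm{\nabla h(y)} \cdot \norm{\grad f(\A^{\infty}(y))} \;\geq\; \inner{\nabla h(y),\, \grad f(\A^{\infty}(y))},
\]
so it suffices to bound the right-hand side below by $\tfrac{1}{2}\norm{\grad f(\A^{\infty}(y))}^2$. Set $\bar y := \A^{\infty}(y)$; by Lemma \ref{Le_dist_Ainfty}, $\bar y \in \M \cap \Omegax{x}$ and $\norm{y-\bar y} \leq \frac{4(\Ma+1)}{\Lsc}\norm{c(y)}$. Using Proposition \ref{Prop_gradient_h}, I would split $\nabla h(y) = \nabla g(y) + \beta \Jc(y)c(y)$ with $g(x):=f(\A(x))$ and break the inner product into the two corresponding pieces.

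I would first evaluate both pieces at the reference point $\bar y$. Since $\grad f(\bar y) \in \mathcal{T}_{\bar y}$, Lemma \ref{Le_Ja_identical} gives $\Ja(\bar y)\tp \grad f(\bar y) = \grad f(\bar y)$; combined with $\Jc(\bar y)\tp \grad f(\bar y) = 0$ (tangent is orthogonal to normal) and the decomposition $\nabla f(\bar y) = \grad f(\bar y) + \Jc(\bar y)\lambda(\bar y)$, this yields $\inner{\nabla g(\bar y), \grad f(\bar y)} = \norm{\grad f(\bar y)}^2$ and $\inner{\Jc(\bar y)c(y), \grad f(\bar y)} = 0$. At the actual point $y$, the perturbations $\nabla g(y)-\nabla g(\bar y)$ and $(\Jc(y)-\Jc(\bar y))\tp \grad f(\bar y)$ are controlled by the local Lipschitz constants $\Lg$ and $\Lc$ on $\Theta_x$, since $y,\bar y \in \Theta_x$. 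Substituting the estimate on $\norm{y-\bar y}$ from Lemma \ref{Le_dist_Ainfty} and dividing by $\norm{\grad f(\bar y)}$ (the case $\grad f(\bar y)=0$ being trivial), I expect to arrive at
\[
\norm{\nabla h(y)} \;\geq\; \norm{\grad f(\bar y)} \;-\; \frac{4(\Ma+1)\Lg}{\Lsc}\norm{c(y)} \;-\; \frac{4(\Ma+1)\beta\Lc}{\Lsc}\norm{c(y)}^2.
\]

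To close the argument I would combine this with the complementary lower bound $\norm{\nabla h(y)} \geq \frac{\beta\Lsc}{8(\Ma+1)}\norm{c(y)}$ from Theorem \ref{The_firstorder_equivalence} (valid on $\Omegax{x} \supset \BOmegax{x}$) via a suitable convex combination. The linear-in-$\norm{c(y)}$ error term is absorbed by the entry $\beta \geq \beta_{x} \geq \frac{128\Lg(\Ma+1)^2}{\Lsc^2}$ in the definition of $\beta_x$, while the quadratic-in-$\norm{c(y)}$ error is absorbed by the smallness of $\norm{c(y)}$ on $\BOmegax{x}$. The main obstacle is exactly this last constant-chasing step: the quadratic term carries its own factor of $\beta$, so it cannot be swallowed by the same mechanism as the linear one, and instead forces the use of the tighter radius $\frac{\Lsc\varepsilon_x}{4\Mc(\Ma+1)+\Lsc}$ defining $\BOmegax{x}$ (rather than the larger $\varepsilon_x$ of $\Omegax{x}$), which supplies precisely the slack needed to obtain the explicit constant $1/2$ rather than some unspecified constant less than $1$.
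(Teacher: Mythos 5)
Your proposal is correct and follows essentially the same route as the paper: it arrives at the identical intermediate bound $\norm{\nabla h(\y)} \geq \norm{\grad f(\A^{\infty}(\y))} - \frac{4\Lg(\Ma+1)}{\Lsc}\norm{c(\y)} - \frac{4\beta\Lc(\Ma+1)}{\Lsc}\norm{c(\y)}^2$ and closes it with the same convex combination against Theorem \ref{The_firstorder_equivalence}, absorbing the linear term via $\beta \geq \beta_x$ and the quadratic term via the radius of $\BOmegax{x}$ and the bound $\varepsilon_x \leq \frac{\Lsc}{32\Lc(\Ma+1)}$. The only cosmetic difference is that you test $\nabla h(\y)$ against the single tangent direction $\grad f(\A^{\infty}(\y))$ via Cauchy--Schwarz, whereas the paper projects onto the full tangent space at $\A^{\infty}(\y)$ with an orthonormal basis $U$; the two yield the same estimates.
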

	Proposition \ref{Prop_upperbound_projection_gradient}
	shows that $\norm{\grad f(A^{\infty}(y))}$  can
	be controlled by $\norm{\nabla h(y)}$. Next, we study the
	relationship between the Riemannian Hessian of $f$ at $A^{\infty}(y)$
	and the Hessian of $h$ at $y$. 
	
	\begin{prop}
		\label{Prop_sepctral_interlace}
		Suppose Assumption \ref{Assumption_4} holds, then the following inequalities hold for any $x \in \M$
		\begin{align}
			& \lambda_{\min}(\hess f(x)) \geq \lambda_{\min}(\nabla^2 h(x)) - \La \norm{\grad f(x)},  \\
			& \lambda_{\max}(\hess f(x)) \leq \lambda_{\max}(\nabla^2 h(x)) + \La \norm{\grad f(x)}.
		\end{align}
		
	\end{prop}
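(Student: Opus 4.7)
The plan is to reduce the eigenvalue comparison to a pointwise comparison of $d\tp \nabla^2 h(x) d$ against $d\tp \hess f(x) d$ for unit vectors $d \in \Tx$, and then invoke the Courant--Fischer variational characterization of extremal eigenvalues. The idempotent structure of $\Ja(x)$ on $\Tx$ from Lemma \ref{Le_Ja_identical}, together with the feasibility $c(x) = 0$ at $x \in \M$, provides the main algebraic inputs, while the quadratic feasibility reduction in Lemma \ref{Le_A_secondorder_descrease} supplies the crucial identity linking $\DJa$ with the manifold second fundamental form.

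First I would specialize Proposition \ref{Prop_gradient_h} at $x \in \M$. Since $c(x) = 0$ and $\A(x) = x$, the Hessian collapses to $\nabla^2 h(x) = \Ja(x)\nabla^2 f(x)\Ja(x)\tp + \DJa(x)[\nabla f(x)] + \beta \Jc(x)\Jc(x)\tp$. For any unit $d \in \Tx$, Lemma \ref{Le_Ja_identical} gives $\Ja(x)\tp d = d$, and the definition of the tangent space gives $\Jc(x)\tp d = 0$; the penalty block therefore disappears and I obtain $d\tp \nabla^2 h(x) d = d\tp \nabla^2 f(x) d + d\tp \DJa(x)[\nabla f(x)] d$. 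Substituting the decomposition $\nabla f(x) = \grad f(x) + \Jc(x) \lambda(x)$ from Proposition \ref{Prop_FOSP_Rie} reduces everything to controlling $\sum_i \lambda_i(x)\, d\tp \DJa(x)[\nabla c_i(x)] d$.

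The hard part will be establishing the key identity $d\tp \DJa(x)[\nabla c_i(x)] d = -\, d\tp \nabla^2 c_i(x) d$ for $d \in \Tx$ and $x \in \M$, because the relation $\Ja(y)\Jc(y) = 0$ only holds on $\M$ and so cannot be differentiated in $\bb{R}^n$ directly. To bypass this, I would exploit Lemma \ref{Le_A_secondorder_descrease}: for a unit tangent direction $d \in \Tx$ and small $t$, the second-order Taylor expansion of $c$ gives $\norm{c(x+td)} = \ca{O}(t^2)$, whence $\norm{c(\A(x+td))} = \ca{O}(t^4)$. Taylor expanding the scalar map $t \mapsto c_i(\A(x+td))$ up to second order, and using the composition formula $\nabla^2(c_i\circ\A)(x) = \Ja(x)\nabla^2 c_i(x)\Ja(x)\tp + \DJa(x)[\nabla c_i(x)]$ together with $\Ja(x)\tp d = d$, forces the quadratic coefficient to vanish, which is precisely the required identity.

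Once this identity is in hand, the combination $d\tp \nabla^2 f(x) d - \sum_i \lambda_i(x)\, d\tp \nabla^2 c_i(x) d$ equals $d\tp \hess f(x) d$ by Proposition \ref{Prop_FOSP_Rie}, so the computation collapses to $d\tp \nabla^2 h(x) d = d\tp \hess f(x) d + d\tp \DJa(x)[\grad f(x)] d$ for every unit $d \in \Tx$. The Lipschitz continuity of $\Ja$ yields $\norm{\DJa(x)[\grad f(x)]} \leq \La \norm{\grad f(x)}$, so the cross-term is bounded in absolute value by $\La \norm{\grad f(x)}$. Finally, using $\lambda_{\max}(\nabla^2 h(x)) \geq \sup_{d \in \Tx,\, \norm{d}=1} d\tp \nabla^2 h(x) d$ and $\lambda_{\min}(\nabla^2 h(x)) \leq \inf_{d \in \Tx,\, \norm{d}=1} d\tp \nabla^2 h(x) d$, combined with the variational formulas for $\lambda_{\max}(\hess f(x))$ and $\lambda_{\min}(\hess f(x))$ as a self-adjoint operator on $\Tx$, delivers both stated inequalities in a single step.
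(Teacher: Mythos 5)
Your proposal is correct and follows essentially the same route as the paper: the paper's proof reduces to the identity $d\tp \nabla^2 h(x) d = d\tp \hess f(x) d + d\tp \left(\DJa(x)[\grad f(x)]\right) d$ on unit tangent vectors (via Lemma \ref{Le_Hessian_tangent} and Lemma \ref{Le_esti_hess_A}) and then applies the same variational bounds with $\norm{\DJa(x)[\grad f(x)]} \leq \La\norm{\grad f(x)}$. The only difference is cosmetic: you inline the proof of Lemma \ref{Le_esti_hess_A}, deriving the quadratic-form identity $d\tp \nabla^2 (c_i\circ\A)(x) d = 0$ from the $\ca{O}(t^4)$ decay of $\norm{c(\A(x+td))}$, whereas the paper establishes the slightly stronger vector identity $\nabla^2 (c_i\circ\A)(x) d = 0$ via the Lipschitz constant $\Lac$; either version suffices here.
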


	\begin{prop}
		\label{Prop_esti_lambda_min}
		Suppose Assumption \ref{Assumption_4} holds. For any given $x \in \M$ and $\beta \geq \beta_{x}$, the following inequality holds for any $y \in \BOmegax{x}$,
		\begin{equation}
			\begin{aligned}
				\lambda_{\min}(\hess f(\A^{\infty}(y))) \geq{}& \lambda_{\min}( \nabla^2 h(y)) - \norm{\nabla^2 g(\A^{\infty}(y)) - \nabla^2 g(y)} \\
				&- \left(\frac{5}{2}\La + \frac{96 \Lc\Mc(\Ma + 1)^2}{ \Lsc^2} \right) \norm{\nabla h(y)}.
			\end{aligned}
		\end{equation} 
	\end{prop}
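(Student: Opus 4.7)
The plan is to use $\A^{\infty}(y)$ as a feasible anchor, transfer all spectral information there via the manifold interlacing inequality of Proposition \ref{Prop_sepctral_interlace}, and then close the gap between the Hessian of $h$ at $y$ and at $\bar{y} := \A^{\infty}(y)$ through an operator-norm perturbation bound. Since Lemma \ref{Le_dist_Ainfty} guarantees $\bar y \in \M \cap \Omegax{x}$, all earlier estimates apply at $\bar y$.

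First I apply Proposition \ref{Prop_sepctral_interlace} at $\bar y$ to get
$$
\lambda_{\min}(\hess f(\bar y)) \geq \lambda_{\min}(\nabla^2 h(\bar y)) - \La \norm{\grad f(\bar y)},
$$
and then Proposition \ref{Prop_upperbound_projection_gradient} to replace $\norm{\grad f(\bar y)}$ by $2\norm{\nabla h(y)}$, producing a $-2\La\norm{\nabla h(y)}$ contribution. Next, Weyl's inequality yields
$$
\lambda_{\min}(\nabla^2 h(\bar y)) \geq \lambda_{\min}(\nabla^2 h(y)) - \norm{\nabla^2 h(\bar y) - \nabla^2 h(y)}.
$$
Using the Hessian formula of Proposition \ref{Prop_gradient_h} together with $c(\bar y) = 0$, I decompose
$$
\nabla^2 h(\bar y) - \nabla^2 h(y) = \bigl[\nabla^2 g(\bar y) - \nabla^2 g(y)\bigr] + \beta\bigl[\Jc(\bar y)\Jc(\bar y)\tp - \Jc(y)\Jc(y)\tp\bigr] - \beta\,\DJc(y)[c(y)].
$$
The first bracket survives in the statement, so no further estimation is needed there. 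The middle bracket is handled by the telescoping identity $\Jc(\bar y)\Jc(\bar y)\tp - \Jc(y)\Jc(y)\tp = (\Jc(\bar y)-\Jc(y))\Jc(\bar y)\tp + \Jc(y)(\Jc(\bar y)-\Jc(y))\tp$ together with the uniform bound $\Mc$ and the Lipschitz constant $\Lc$, giving a bound of $2\Mc\Lc\norm{\bar y - y}$. The last term is bounded by $\Lc\norm{c(y)}$ using the Lipschitz continuity of $\Jc$.

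Finally I translate the feasibility violation into gradient magnitude: Lemma \ref{Le_dist_Ainfty} gives $\norm{\bar y - y} \leq \tfrac{4(\Ma+1)}{\Lsc}\norm{c(y)}$, and Theorem \ref{The_firstorder_equivalence} gives $\beta\norm{c(y)} \leq \tfrac{8(\Ma+1)}{\Lsc}\norm{\nabla h(y)}$. Chaining these estimates converts the two $\beta$-dependent contributions into multiples of $\norm{\nabla h(y)}$; combining with the $2\La\norm{\nabla h(y)}$ from the interlacing step, and absorbing the lower-order $\Lc/\Lsc$ term into the $\Lc\Mc/\Lsc^2$ term via $\Lsc \leq \Mc$, delivers the announced coefficient structure. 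I expect the main obstacle to be careful bookkeeping of constants rather than any conceptual difficulty — in particular, recovering exactly $\tfrac{5}{2}\La$ in place of the naive $2\La$ will likely require a slightly sharper accounting of how $\La$ enters both the interlacing inequality and the comparison of $\nabla^2 g$ at $y$ and $\bar y$ through the formula $\nabla^2 g = \Ja\nabla^2 f \Ja\tp + \DJa[\nabla f\circ\A]$.
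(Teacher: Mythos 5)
Your argument is correct, and it takes a genuinely different route from the paper's. The paper never compares $\nabla^2 h$ at two points in full operator norm: instead it compresses everything onto the tangent space at $\bar y = \A^{\infty}(y)$ via an orthonormal basis $U_y$, uses $\lambda_{\min}(\nabla^2 h(y)) \leq \lambda_{\min}(U_y\tp \nabla^2 h(y) U_y)$, and exploits $U_y\tp \Jc(\bar y) = 0$ to make $U_y\tp\bigl(\Jc(y)\Jc(y)\tp + \DJc(y)[c(y)]\bigr)U_y$ of size $O(\Lc\Mc\norm{y-\bar y})$; it also routes the gradient bound through $\norm{\grad f(\bar y)} \leq 2\norm{\nabla h(\bar y)} \leq 2\norm{\nabla h(y)} + 2\Lg\norm{y - \bar y}$, which is exactly where the $\tfrac{5}{2}\La$ (rather than $2\La$) comes from after invoking $\beta \geq \beta_x$. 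You instead do a full-space Weyl perturbation of $\nabla^2 h$ between $y$ and $\bar y$; the crucial observation that makes this work --- and which you correctly identify --- is that the potentially huge $\beta\Jc\Jc\tp$ blocks enter only as a \emph{difference}, hence are $O(\beta\Mc\Lc\norm{\bar y - y}) = O(\beta\norm{c(y)}) = O(\norm{\nabla h(y)})$ after Lemma \ref{Le_dist_Ainfty} and Theorem \ref{The_firstorder_equivalence} cancel the $\beta$. Your route is more elementary (Weyl plus Lipschitz bounds, no subspace compression) and, applying Proposition \ref{Prop_upperbound_projection_gradient} directly at $y$ rather than at $\bar y$, it actually yields the sharper coefficients $2\La$ and $\tfrac{64+8}{1}\cdot\Lc\Mc(\Ma+1)^2/\Lsc^2$, both of which imply the stated bound a fortiori; your closing worry about ``recovering exactly $\tfrac{5}{2}\La$'' is therefore moot, since a stronger inequality suffices. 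What the paper's compression buys in exchange is that it only ever needs tangent-space-restricted quadratic forms (the same objects used in Theorem \ref{The_secondorder_equivalence}) and avoids bounding the symmetric correction $\DJc(y)[c(y)]$ in full operator norm, but for this statement both mechanisms close cleanly.
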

	
	\subsubsection{{\L}ojasiewicz exponents and local minimizers}
	\label{Subsubsection_Relationships_oters}
	The following proposition guarantees the fact that \ref{Prob_Ori} and \ref{Prob_Pen} share the same {\L}ojasiewicz exponents at any $x \in \M$. 
	\begin{prop}
		
		\label{Prop_KL}
		For any given $x \in \M$, suppose $\beta \geq \beta_{x}$ and \ref{Prob_Ori} satisfies the Riemannian {\L}ojasiewicz gradient inequality at  $x$ with Riemannian {\L}ojasiewicz exponent $\theta \in (0, \frac{1}{2}]$,  then \ref{Prob_Pen} satisfies the (Euclidean) {\L}ojasiewicz gradient inequality at $x$ with {\L}ojasiewicz exponent $\theta$.
	\end{prop}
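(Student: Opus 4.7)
The plan is to reduce the Euclidean \L{}ojasiewicz inequality for $h$ at $x$ to the Riemannian one for $f$ at $x$, using $\A^{\infty}(y)$ as a bridge from the infeasible iterate $y$ to a point on $\M$. First I would fix a sufficiently small neighborhood $\ca{V}$ of $x$ in $\bb{R}^n$, contained in $\BOmegax{x}$ and small enough that (i) $\A^{\infty}(y)$ lies in the neighborhood $\ca{U}\subset\M$ of $x$ on which the Riemannian \L{}ojasiewicz inequality holds and (ii) $\norm{\nabla h(y)}\leq 1$. The first requirement is guaranteed by Lemma \ref{Le_dist_Ainfty} combined with $\norm{\A^{\infty}(y)-x}\leq \norm{\A^{\infty}(y)-y}+\norm{y-x}\leq \frac{4(\Ma+1)}{\Lsc}\norm{c(y)}+\norm{y-x}$, and the second by continuity of $\nabla h$ together with $\nabla h(x)=0$ (Proposition \ref{Prop_Firstorder_Equivalence_feasible}).

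The key technical step is a quadratic bound
\begin{equation*}
|h(y)-h(\A^{\infty}(y))|\leq C_{1}\norm{c(y)}^{2},
\end{equation*}
obtained as follows. Since $\A^{\infty}(y)\in\M$, we have $c(\A^{\infty}(y))=0$ and $\A(\A^{\infty}(y))=\A^{\infty}(y)$, so $h(\A^{\infty}(y))=f(\A^{\infty}(y))$. Therefore
\begin{equation*}
h(y)-h(\A^{\infty}(y)) = \bigl(f(\A(y))-f(\A^{\infty}(y))\bigr)+\tfrac{\beta}{2}\norm{c(y)}^{2}.
\end{equation*}
Applying Lemma \ref{Le_dist_Ainfty} to $\A(y)$ in place of $y$, together with the quadratic constraint reduction of Lemma \ref{Le_A_secondorder_descrease}, yields $\norm{\A(y)-\A^{\infty}(y)}=\norm{\A^{\infty}(\A(y))-\A(y)}\leq \frac{4(\Ma+1)}{\Lsc}\norm{c(\A(y))}\leq C_{2}\norm{c(y)}^{2}$. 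The local Lipschitz continuity of $f$ on the bounded set $\Theta_{x}$ then gives $|f(\A(y))-f(\A^{\infty}(y))|\leq \Lf\,C_{2}\norm{c(y)}^{2}$, proving the claim. This quadratic estimate is the crucial improvement over the naive $O(\norm{c(y)})$ bound coming from the Lipschitz continuity of $h$ alone; without it the argument below would fail.

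With this estimate in hand I would finish by the triangle inequality and subadditivity of $t\mapsto t^{1-\theta}$ on $[0,\infty)$ (valid since $1-\theta\in[\tfrac12,1)$):
\begin{equation*}
|h(y)-f(x)|^{1-\theta}\leq |f(\A^{\infty}(y))-f(x)|^{1-\theta}+C_{1}^{1-\theta}\,\norm{c(y)}^{2(1-\theta)}.
\end{equation*}
The Riemannian \L{}ojasiewicz inequality at $x$ gives $|f(\A^{\infty}(y))-f(x)|^{1-\theta}\leq C^{-1}\norm{\grad f(\A^{\infty}(y))}$, which Proposition \ref{Prop_upperbound_projection_gradient} bounds by $2C^{-1}\norm{\nabla h(y)}$. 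Theorem \ref{The_firstorder_equivalence} bounds $\norm{c(y)}\leq \frac{8(\Ma+1)}{\beta\Lsc}\norm{\nabla h(y)}$, so $\norm{c(y)}^{2(1-\theta)}\leq C_{3}\norm{\nabla h(y)}^{2(1-\theta)}$. Since $2(1-\theta)\geq 1$ and $\norm{\nabla h(y)}\leq 1$ on $\ca{V}$, we have $\norm{\nabla h(y)}^{2(1-\theta)}\leq \norm{\nabla h(y)}$. Combining these estimates yields $|h(y)-f(x)|^{1-\theta}\leq C'\,\norm{\nabla h(y)}$ on $\ca{V}$, which is the desired Euclidean \L{}ojasiewicz gradient inequality for $h$ at $x$ with the same exponent $\theta$.

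The main obstacle is precisely obtaining the quadratic bound $|h(y)-h(\A^{\infty}(y))|=O(\norm{c(y)}^{2})$: a crude $O(\norm{c(y)})$ bound produces a term $\norm{\nabla h(y)}^{1-\theta}$ that dominates $\norm{\nabla h(y)}$ as $y\to x$ and breaks the inequality. The restriction $\theta\leq\tfrac12$ (equivalently $2(1-\theta)\geq 1$) enters exactly at the final step to absorb $\norm{\nabla h(y)}^{2(1-\theta)}$ into $\norm{\nabla h(y)}$.
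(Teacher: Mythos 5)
Your proposal is correct and follows essentially the same route as the paper's proof: both use $\A^{\infty}(y)$ as the bridge to $\M$, the same quadratic estimate $|h(y)-h(\A^{\infty}(y))|=\ca{O}(\norm{c(y)}^{2})$ via Lemma \ref{Le_Ax_c}, Lemma \ref{Le_A_secondorder_descrease} and Lemma \ref{Le_dist_Ainfty}, and the same combination of Proposition \ref{Prop_upperbound_projection_gradient} with Theorem \ref{The_firstorder_equivalence}. The only cosmetic difference is in absorbing the residual term with exponent $2-2\theta\geq 1$: you convert it to $\norm{\nabla h(y)}^{2-2\theta}\leq\norm{\nabla h(y)}$ on a neighborhood where $\norm{\nabla h(y)}\leq 1$, whereas the paper dominates $\norm{c(z)}^{2-2\theta}$ by the $\norm{c(z)}$ term on a neighborhood where $\norm{c(z)}^{1-2\theta}$ is suitably bounded.
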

	
	At the end of this subsection, we establish the relationship
	on the local minimizers between \ref{Prob_Ori} and \ref{Prob_Pen}.
	\begin{theo}
		
		\label{The_local_minimizer}
		For any given $x \in \M$, suppose $\beta \geq \beta_{x}$, then any local minimizer of \ref{Prob_Ori} in $\BOmegax{x}$ is a local minimizer of \ref{Prob_Pen}. Moreover, any local minimizer of \ref{Prob_Pen} in $\BOmegax{x}$ is a local minimizer of \ref{Prob_Ori}. 
	\end{theo}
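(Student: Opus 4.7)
The plan is to prove the two directions separately, leveraging the asymptotic constraint-dissolving map $\A^{\infty}$ established in Lemma \ref{Le_dist_Ainfty} together with the feasibility-reduction inequality of Proposition \ref{Prop_postprocessing}. Throughout, note that any local minimizer of \ref{Prob_Ori} lies in $\M$, and by Theorem \ref{The_firstorder_equivalence} any local minimizer of \ref{Prob_Pen} in $\BOmegax{x}$ is a first-order stationary point of \ref{Prob_Ori} and hence also lies in $\M$. Thus in both directions the candidate point $y^{*}$ is feasible, so $h(y^{*}) = f(y^{*})$.

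For the direction \emph{local min of \ref{Prob_Ori} $\Rightarrow$ local min of \ref{Prob_Pen}}, I would fix such a $y^{*} \in \M \cap \BOmegax{x}$ and choose a Euclidean neighborhood $\ca{V}$ of $y^{*}$ on which $f(w) \geq f(y^{*})$ for every $w \in \M \cap \ca{V}$. Using Lemma \ref{Le_bound_cx} and Lemma \ref{Le_dist_Ainfty}, I can shrink $\ca{V}$ so that $\ca{V} \subset \BOmegax{x}$ and $\A^{\infty}(z) \in \ca{V}$ for every $z \in \ca{V}$ (this works because $\norm{\A^{\infty}(z) - z} \leq \tfrac{4(\Ma+1)}{\Lsc}\norm{c(z)}$ and $c(z) \to 0$ as $z \to y^{*}$). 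For any $z$ in this shrunk neighborhood, Proposition \ref{Prop_postprocessing} yields
\begin{equation*}
h(z) \;\geq\; h(\A^{\infty}(z)) + \tfrac{\beta}{4}\norm{c(z)}^{2} \;\geq\; h(\A^{\infty}(z)) \;=\; f(\A^{\infty}(z)) \;\geq\; f(y^{*}) \;=\; h(y^{*}),
\end{equation*}
where the penultimate inequality uses that $\A^{\infty}(z) \in \M \cap \ca{V}$.

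The reverse direction is essentially immediate: if $y^{*} \in \BOmegax{x}$ is a local minimizer of \ref{Prob_Pen}, Theorem \ref{The_firstorder_equivalence} (applied with $\beta \geq \beta_{x}$) yields $y^{*} \in \M$, so $h(y^{*}) = f(y^{*})$. Take any Euclidean neighborhood $\ca{V}$ on which $h(z) \geq h(y^{*})$; then for every $z \in \M \cap \ca{V}$ one has $c(z) = 0$, hence $f(z) = h(z) \geq h(y^{*}) = f(y^{*})$, which establishes local minimality for \ref{Prob_Ori}.

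The only real obstacle is the first direction, and within it the only subtlety is ensuring that $\A^{\infty}(z)$ stays inside the original neighborhood where $y^{*}$ dominates on $\M$. This reduces to the uniform bound $\norm{\A^{\infty}(z) - y^{*}} \leq \norm{\A^{\infty}(z) - z} + \norm{z - y^{*}}$ together with the feasibility-controlled displacement estimate from Lemma \ref{Le_dist_Ainfty}; continuity of $c$ at $y^{*}$ then lets me pick the required neighborhood. Everything else follows formally from Proposition \ref{Prop_postprocessing} and the definition of $h$ on $\M$.
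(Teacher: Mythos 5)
Your proposal is correct and follows essentially the same route as the paper: the forward direction combines Lemma \ref{Le_dist_Ainfty} with Proposition \ref{Prop_postprocessing} to get $h(z) \geq h(\A^{\infty}(z)) = f(\A^{\infty}(z)) \geq f(y^*) = h(y^*)$ on a suitably shrunk neighborhood, and the reverse direction uses Theorem \ref{The_firstorder_equivalence} to force feasibility and then the identity $h = f$ on $\M$. The only (harmless) imprecision is the phrase ``$\A^{\infty}(z) \in \ca{V}$ for every $z \in \ca{V}$'' read as a self-mapping condition; as your closing paragraph correctly indicates, what is actually needed and achievable is that $\A^{\infty}$ maps the shrunk neighborhood into the \emph{original} one where $f(\cdot) \geq f(y^*)$ holds on $\M$, which is exactly the paper's construction of $\tilde{\ca{U}}_1$.
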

	
	The detailed proofs of Proposition \ref{Prop_KL} and Proposition \ref{The_local_minimizer} are presented in Appendix \ref{Section_proofs_323}.
	
	\subsection{Constraint dissolving approach and theoretical analysis}
	
	\label{Subsection_framework}

	In this subsection, we show how to establish the convergence properties
	of the proposed constraint dissolving approaches directly from existing results.  In our proposed constraint dissolving approaches, we first construct the corresponding \ref{Prob_Pen} for \ref{Prob_Ori}, then we select a specific unconstrained optimization approach to minimize \ref{Prob_Pen}. Moreover, we perform a post-processing procedure to achieve high accuracy in feasibility, if necessary.  The details of the constraint dissolving approach are summarized in Algorithm \ref{Alg:Constraint_Dissloving}.

	\begin{algorithm}
		\caption{Constraint Dissolving Approach for \ref{Prob_Ori}.}
		\label{Alg:Constraint_Dissloving}
		\begin{algorithmic}[1]   
			\Require Input data:  manifold $\M$, objective function $f$, penalty parameter $\beta$, initial guess $x_0$, stationarity tolerance $\epsilon_s$, feasibility tolerance $\epsilon_f$, and a selected unconstrained optimization approach (UCO).
			\State Construct the constraint dissolving function \ref{Prob_Pen}.
			\State Initiated from $x_0$, invoke the UCO to solve CDF and generate a sequence $\{x_k\}$. Terminate when tolerance $\epsilon_s$ is reached and obtain $\tilde{x}$.
			\If{require post-processing}
			\While{$\norm{c(\tilde{x})} > \varepsilon_f$} 
			\State $\tilde{x} = \A(\tilde{x})$.
			\EndWhile
			\EndIf
			\State Return $\tilde{x}$.
		\end{algorithmic}  
	\end{algorithm}
	
		\begin{rmk}
			As shown in Lemma \ref{Le_A_secondorder_descrease}, for any $x \in \M$, the constraint dissolving operator $\A$ can quadratically reduce the feasibility violation of any infeasible point in $\Omegax{x}$. Therefore, given any $\varepsilon_f > 0$ in Algorithm \ref{Alg:Constraint_Dissloving}, step 4-6 in Algorithm \ref{Alg:Constraint_Dissloving} are only performed for at most $\ca{O}\left( \log(\log(\varepsilon_f^{-1})) \right)$ times. More precisely, when the UCO in Algorithm \ref{Alg:Constraint_Dissloving} yields an $\tilde{x}$ that satisfies $\norm{\nabla h(\tilde{x})} \leq \varepsilon_s$, then from Lemma \ref{Le_A_secondorder_descrease} and Theorem \ref{The_firstorder_equivalence},  step 4-6 in Algorithm \ref{Alg:Constraint_Dissloving} are only performed for at most $\ca{O}\left(   \log(\log( \varepsilon_s  \varepsilon_f^{-1} \beta^{-1})) \right)$ iterations. 
		\end{rmk}
	
	For convenience, we call the selected unconstrained optimization approach in Algorithm \ref{Alg:Constraint_Dissloving} as UCO, and let $\{x_k\}$
	be the iterates generated by
	Algorithm \ref{Alg:Constraint_Dissloving}.
	We assume that there exists a compact set $\hat{\Gamma} \subset \bb{R}^n$ such that
	$\{x_k\} \subset \bar{\Omega} \cap \hat{\Gamma}$. Hence, $\sup_{x \in \M\cap \hat{\Gamma}} \beta _x < +\infty$ due to the compactness of $\M\cap \hat{\Gamma}$. 
	We choose  sufficiently large $\beta$ such that  $\beta \geq \sup_{x \in \M\cap \hat{\Gamma}} \beta _x$. 
	Then we can adopt the following framework to establish the corresponding theoretical results.
	\begin{itemize}
		\item {\bf Global convergence:} Theorem \ref{The_firstorder_equivalence} illustrates that \ref{Prob_Ori} and \ref{Prob_Pen} have the same first-order stationary points in $\bar{\Omega} \cap \hat{\Gamma}$. Therefore, if a cluster point of  $\{x_k\}$ is a first-order stationary point of \ref{Prob_Pen}, then we can claim that it is a  first-order stationary point of \ref{Prob_Ori}. Moreover, Proposition \ref{Prop_KL} illustrates that \ref{Prob_Ori} and \ref{Prob_Pen} have the same {\L}ojasiewicz gradient exponents. Therefore, when \ref{Prob_Ori} satisfies the KL property, then the sequence convergence of $\{x_k\}$ can be guaranteed by Proposition \ref{Prop_KL} and existing results, for instance \cite{bolte2014proximal}, established for the selected unconstrained optimization approach in Algorithm \ref{Alg:Constraint_Dissloving}.
		\item {\bf Local convergence rate: } 
		Theorem  \ref{The_local_minimizer} shows that \ref{Prob_Ori} and \ref{Prob_Pen} have the same local minimizers in $\bar{\Omega} \cap \hat{\Gamma}$. Together with Proposition \ref{Prop_KL}, the local convergence rate of the sequence $\{x_k\}$ can be established from prior works \cite{ochs2014ipiano,li2015accelerated,lei2019stochastic}. 
		\item {\bf Worst case complexity: } Proposition \ref{Prop_upperbound_projection_gradient} establishes the relationship between $\norm{\nabla h(x_k)}$ and $\norm{\grad f(\A^{\infty}(x_k))}$. Consequently, if Algorithm \ref{Alg:Constraint_Dissloving} produces
		an iterate $x_k$ satisfying 
		$\norm{\nabla f(x_k)} \leq \epsilon_s$, then it holds that 
		$\norm{\grad f(\A^{\infty}(x_k))} \leq 2\epsilon_s$. As a result, the worst case complexity of Algorithm \ref{Alg:Constraint_Dissloving} can be obtained from prior works immediately  \cite{cartis2012adaptive,cartis2012complexity,cartis2018worst}. 
		\item {\bf The ability of escaping from saddle points: } 
		If a cluster point of  $\{x_k\}$ is a second-order stationary point of \ref{Prob_Pen}, then  
		Theorem \ref{The_secondorder_equivalence} guarantees that 
		this cluster point is a second-order stationary point of \ref{Prob_Ori} as well. Moreover, for any sequence $\{x_k\}$ generated by Algorithm \ref{Alg:Constraint_Dissloving}, Proposition \ref{Prop_esti_lambda_min} provides the relationship among $\hess f(\A^{\infty}(x_k))$, $\norm{\nabla h(x_k)}$ and $\lambda_{\min}(\nabla^2 h(\xk))$. Consequently,  Algorithm \ref{Alg:Constraint_Dissloving} inherits the escaping-from-saddle-point properties from its UCO, while the theoretical analysis directly follows existing results in unconstrained optimization \cite{ge2015escaping,jin2017escape,jin2018accelerated}. 
	\end{itemize}

	Clearly, the existence of a bounded set $\bar{\Omega} \cap \hat{\Gamma}$, which 
	contains all iterates 
	generated by Algorithm \ref{Alg:Constraint_Dissloving}, 
	is crucial for the establishment of the above-mentioned theoretical properties. 
	In the rest of this subsection, we provide easy-to-verify conditions for the existence of such a compact set under a mild assumption, which covers a board class of scenarios.

	\begin{assumpt}{\bf Assumption on the coercivity of $f(x)$ over $\M$}
		\label{Assumption_3}
		\begin{itemize}
			\item  The level set $ \Gamma_x^\star := \{ y \in \M: f(y) \leq f(x)  \}$ is compact for any $x \in \M$.
		\end{itemize}  
	\end{assumpt}
	
	Assumption \ref{Assumption_3} straightforwardly holds when 
	$\M$ is compact, and it is commonly assumed in the literature \cite{bai2014minimization,gao2021riemannian,son2021symplectic}.
	Those extreme situations that Assumption \ref{Assumption_3} does not hold are out of the scope of this paper.

	For any given $x \in \M$ and any constant $\zeta > 0$, we set $\Gamma_{x, \zeta}:= \{ y\in \bb{R}^n: \mathrm{dist}(y, \Gamma_x^\star) \leq \zeta + 1 \}$ and 
	$\mu_{x, \zeta} := \inf_{y \in \Gamma_{x, \zeta}\setminus \bar{\Omega}} \norm{c(y)}^2$. From the definition of $\bar{\Omega}$ and the compactness of $\Gamma_{x, \zeta}$, we can conclude that $\mu_{x, \zeta} >0$ holds for any $x \in \M$. In addition, we define $M_{\Gamma_{x, \zeta}}:= \sup_{y, z \in \Gamma_{x, \zeta}} g(y) - g(z)$, and $\Xi_{x} := \{ z \in \bar{\Omega} \cap \Gamma_{x, \zeta}: \mathrm{dist}(z, \Gamma_x^\star) \leq  1/2 \}$.  Then we  introduce the following threshold value of $\beta$. 
	\begin{defin}
		\label{Cond_beta_global}
		For any given $x \in \M$ and any $\zeta > 0$, we define 
		\begin{equation}
			\bar{\beta}_x := \max\left\{  \frac{4 M_{\Gamma_{x, \zeta}} }{\mu_{x, \zeta}}, \sup_{w \in \M \cap \Gamma_{x, \zeta}} \beta_w \right\}.
		\end{equation}
	\end{defin}

	\begin{prop}
		\label{Prop_barrier}
		Suppose Assumption \ref{Assumption_3} holds. For any given $x \in \M$ and $\zeta > 0$, let $\beta \geq \bar{\beta}_x$, then it holds that 
		\begin{equation}
			\{ y \in \bb{R}^n: h(y ) \leq h(x) \} \cap \Gamma_{x, \zeta} \subset  \Xi_{x}. 
		\end{equation}
	\end{prop}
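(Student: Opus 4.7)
The plan is to fix an arbitrary $y \in \Gamma_{x, \zeta}$ with $h(y) \leq h(x)$ and establish the two defining properties of $\Xi_x$: that $y \in \bar{\Omega}$ and that $\mathrm{dist}(y, \Gamma_x^\star) \leq 1/2$. First I would extract a feasibility estimate directly from the level-set hypothesis. Since $x \in \M$ gives $c(x) = 0$ and $\A(x) = x$, we have $h(x) = g(x)$, so the inequality $h(y) \leq h(x)$ rewrites as $\tfrac{\beta}{2}\norm{c(y)}^2 \leq g(x) - g(y)$. Because both $x \in \Gamma_x^\star \subset \Gamma_{x,\zeta}$ and $y \in \Gamma_{x,\zeta}$, the definition of $M_{\Gamma_{x,\zeta}}$ produces the key bound $\norm{c(y)}^2 \leq 2 M_{\Gamma_{x,\zeta}}/\beta$.

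Next I would use the first term in $\bar{\beta}_x$ to force $y \in \bar{\Omega}$. The assumption $\beta \geq 4 M_{\Gamma_{x,\zeta}}/\mu_{x,\zeta}$ combined with the estimate above yields $\norm{c(y)}^2 \leq \mu_{x,\zeta}/2 < \mu_{x,\zeta}$. Since $\mu_{x,\zeta}$ is by construction the infimum of $\norm{c(\cdot)}^2$ over $\Gamma_{x,\zeta}\setminus\bar{\Omega}$, this strict inequality precludes $y \in \Gamma_{x,\zeta}\setminus\bar{\Omega}$, so $y \in \bar{\Omega}\cap \Gamma_{x,\zeta}$.

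It then remains to show $\mathrm{dist}(y, \Gamma_x^\star) \leq 1/2$. I would pick a $w \in \M$ with $y \in \BOmegax{w}$ and chain Lemma \ref{Le_bound_cx} (giving $\norm{c(y)} \leq M_{w,c}\norm{y-w}$) with Lemma \ref{Le_dist_Ainfty} (giving $\norm{y - \A^\infty(y)} \leq 4(M_{w,A}+1)\norm{c(y)}/\sigma_{w,c}$) against the explicit radius of $\BOmegax{w}$; the algebra telescopes to the clean bound $\norm{y - \A^\infty(y)} \leq \varepsilon_w \leq 1/2$. The second term in $\bar{\beta}_x$ guarantees $\beta \geq \beta_w$, so Proposition \ref{Prop_postprocessing} applies and gives $h(\A^\infty(y)) \leq h(y) \leq h(x) = f(x)$; since $\A^\infty(y) \in \M$, this becomes $f(\A^\infty(y)) \leq f(x)$, i.e., $\A^\infty(y) \in \Gamma_x^\star$. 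Consequently $\mathrm{dist}(y, \Gamma_x^\star) \leq \norm{y - \A^\infty(y)} \leq 1/2$, finishing the proof. I expect the main obstacle to be the bookkeeping of the constants and, in particular, verifying that the reference point $w$ lies in $\M \cap \Gamma_{x,\zeta}$ so that $\beta_w$ is genuinely controlled by the supremum in $\bar{\beta}_x$; the ``$+1$'' slack baked into the definition $\Gamma_{x,\zeta} = \{y : \mathrm{dist}(y, \Gamma_x^\star) \leq \zeta+1\}$ is tailored precisely to absorb the displacement $\norm{y-w} \leq \varepsilon_w \leq 1/2$.
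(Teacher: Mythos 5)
Your proof is correct and follows essentially the same route as the paper's: the first term of $\bar{\beta}_x$ forces $\norm{c(y)}^2 < \mu_{x,\zeta}$ and hence $y \in \bar{\Omega}$, after which Lemma \ref{Le_dist_Ainfty} and Proposition \ref{Prop_postprocessing} place $\A^{\infty}(y)$ in $\Gamma_x^\star$ within distance $1/2$ of $y$. The paper merely organizes the second half contrapositively (assuming $y \notin \Xi_x$ and deriving $h(y) > h(x)$), and it shares the same slight looseness you flagged about whether the base point $w$ with $y \in \BOmegax{w}$ actually lies in $\M \cap \Gamma_{x,\zeta}$.
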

	\begin{proof}
	For any $y \in \Gamma_{x, \zeta} \setminus \bar{\Omega}$, it holds from the definition of $M_{\Gamma_{x, \zeta}}$ and $\mu_{x, \zeta}$ that 
	\begin{equation} 
		h(y) - h(x) = g(y) + \frac{\beta}{2} \norm{c(y)}^2 - g(x) \geq -M_{\Gamma_{x, \zeta}} +  \frac{\mu_{x, \zeta} \beta }{2} > 0. 
	\end{equation}	
	Moreover, for any $y \in \left(\bar{\Omega} \cap \Gamma_{x, \zeta}\right) \setminus \Xi_x$, we show that $\A^{\infty}(y) \not \in \Gamma_x^\star$ by contradiction.
	Suppose on the contrary that 
	$\A^{\infty}(y) \in \Gamma_x^\star$. Then Lemma \ref{Le_dist_Ainfty} demonstrates that 
	$$\mathrm{dist}(y, \Gamma_x^\star) \leq \norm{\A^{\infty}(y) - y} \leq \sup_{z \in \Gamma_{x, \zeta} \cap \M} \varepsilon_z \leq \sup_{z \in \Gamma_{x, \zeta} \cap \M} \frac{\rho_z}{2} \leq \frac{1}{2}.$$ As a result, $y \in  \Xi_x$ and this contradicts the fact that $y \in \bar{\Omega} \setminus \Xi_x$. Therefore, $\A^{\infty}(y) \in \M \setminus \Gamma_x^\star$, and from Proposition \ref{Prop_postprocessing}, we obtain that 
	\begin{equation}
		h(y) \geq h(\A^{\infty}(y)) = f( \A^{\infty}(y) ) >  f(x) = h(x).
	\end{equation}
	This completes the proof. 
	\end{proof}
	
	\medskip
	The following corollary illustrates that with the help of Assumption \ref{Assumption_3}, we can actually further relax the requirement $\{x_k\} \subset \bar{\Omega} \cap \hat{\Gamma}$ to $x_0\in \bar{\Omega} \cap \hat{\Gamma}$ under mild conditions.
	\begin{coro}
		\label{Coro_barrier}
		Given any $x_0 \in \M$ and $\zeta > 0$, suppose Assumption \ref{Assumption_3} holds, $\beta \geq \bar{\beta}_{x_0}$ and Algorithm \ref{Alg:Constraint_Dissloving} generates a sequence $\{x_k\}$ that satisfies $h(x_k) \leq h(x_0)$ and $ \norm{x_{k+1} - x_k} \leq \zeta$ for any $k \geq 0$. Then it holds that $\{x_k\} \subset \bar{\Omega} \cap \Gamma_{x_0, \zeta}$. 
	\end{coro}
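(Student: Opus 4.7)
The plan is to argue by induction on $k$, strengthening the conclusion to $x_k \in \Xi_{x_0}$ for all $k \geq 0$; since $\Xi_{x_0} \subset \bar{\Omega} \cap \Gamma_{x_0, \zeta}$ by definition, this immediately yields the corollary. The proof will use nothing beyond Proposition \ref{Prop_barrier}, the triangle inequality, and the two hypotheses on the sequence.

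For the base case, since $x_0 \in \M$ we have $f(x_0) \leq f(x_0)$, so $x_0 \in \Gamma_{x_0}^\star$ and hence $\mathrm{dist}(x_0, \Gamma_{x_0}^\star) = 0 \leq 1/2$. Because $\M$ lies in the interior of $\bar{\Omega}$ (noted right after the definition of $\bar{\Omega}$), $x_0 \in \bar{\Omega}$, and trivially $x_0 \in \Gamma_{x_0, \zeta}$. Therefore $x_0 \in \Xi_{x_0}$.

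For the inductive step, I assume $x_k \in \Xi_{x_0}$ so that $\mathrm{dist}(x_k, \Gamma_{x_0}^\star) \leq 1/2$. The step-size bound $\norm{x_{k+1} - x_k} \leq \zeta$ together with the triangle inequality gives
\begin{equation*}
\mathrm{dist}(x_{k+1}, \Gamma_{x_0}^\star) \leq \norm{x_{k+1} - x_k} + \mathrm{dist}(x_k, \Gamma_{x_0}^\star) \leq \zeta + \tfrac{1}{2} \leq \zeta + 1,
\end{equation*}
so $x_{k+1} \in \Gamma_{x_0, \zeta}$. Combined with the hypothesis $h(x_{k+1}) \leq h(x_0)$ and Proposition \ref{Prop_barrier} applied at $x = x_0$ (valid since $\beta \geq \bar{\beta}_{x_0}$), I conclude $x_{k+1} \in \Xi_{x_0}$, closing the induction.

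The only subtlety is the choice of invariant. If one naively tried to maintain only the conclusion $x_k \in \bar{\Omega} \cap \Gamma_{x_0, \zeta}$ itself, then after a step of size $\zeta$ the triangle inequality would at best give $\mathrm{dist}(x_{k+1}, \Gamma_{x_0}^\star) \leq 2\zeta + 1$, which is not enough to stay in $\Gamma_{x_0, \zeta}$ and one could not reapply Proposition \ref{Prop_barrier}. Strengthening the invariant to $x_k \in \Xi_{x_0}$ exploits the extra slack of $1/2$ baked into the definition of $\Xi_{x_0}$, and Proposition \ref{Prop_barrier} is precisely the mechanism by which the sublevel-set hypothesis $h(x_{k+1}) \leq h(x_0)$ restores this tighter invariant at every step. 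So the real content is already contained in Proposition \ref{Prop_barrier}; the present corollary is the inductive propagation of its conclusion along the algorithm's trajectory.
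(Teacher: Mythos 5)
Your proof is correct and follows essentially the same route as the paper's: an induction in which Proposition \ref{Prop_barrier} converts the sublevel-set hypothesis into membership in $\Xi_{x_0}$, and the step-size bound together with the $1/2$ slack in the definition of $\Xi_{x_0}$ keeps the next iterate inside $\Gamma_{x_0,\zeta}$. The only cosmetic difference is that you carry $x_k\in\Xi_{x_0}$ as the inductive invariant while the paper states the invariant as $x_k\in\bar{\Omega}\cap\Gamma_{x_0,\zeta}$ and rederives $x_k\in\Xi_{x_0}$ at each step, which is logically the same argument.
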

	\begin{proof}
	We prove the inclusion $\{ x_k \} \subset \bar{\Omega} \cap \Gamma_{x_0, \zeta}$ by induction. Suppose $x_j \subset \bar{\Omega} \cap \Gamma_{x_0, \zeta}$ for $0\leq j \leq k$. Then Proposition \ref{Prop_barrier} and the fact that $h(x_k) \leq h(x_0)$ 
	implies that $x_k \in \Xi_{x_0}$. 
	Moreover, it follows from the definition of $\Gamma_{x_0, \zeta}$ and $\Xi_{x_0}$ that $\mathrm{dist}( \bb{R}^n \setminus \Gamma_{x_0, \zeta},  ~ \Xi_{x_0}) \geq \zeta$. 
	
	Therefore, the fact that $\norm{x_{k+1} - x_k } \leq \zeta$ implies that $x_{k+1} \in \Gamma_{x_0, \zeta}$. Furthermore, together with Proposition \ref{Prop_barrier} and the fact that $h(x_{k+1}) \leq h(x_0)$, we arrive at $x_{k+1} \in \Xi_{x_0} \subset \bar{\Omega} \cap \Gamma_{x_0, \zeta}$. Namely, the inclusion $x_j \in \bar{\Omega} \cap \Gamma_{x_0, \zeta}$ holds for $0\leq j\leq k+1$. Then by induction, we obtain that $\{ x_k \} \subset \bar{\Omega} \cap \Gamma_{x_0, \zeta}$ holds for any $k\geq 0$.
	\end{proof}

	\medskip
	\begin{rmk}
		The conditions in Corollary \ref{Coro_barrier} are not restrictive at all. By choosing any monotone algorithm or algorithm that employ nonmonotone line search techniques \cite{grippo1986nonmonotone} as UCO, it is easy to guarantee that the relationship $h(x_k) \leq h(x_0)$ holds for any $k \geq 0$. 
		Moreover, the condition that $\norm{x_{k+1} - x_k} \leq \zeta$ holds for any $k \geq 0$ is also priorly verifiable in most cases. 
		For example, we can choose the UCO in Algorithm \ref{Alg:Constraint_Dissloving} as  
		a line-search method with maximal stepsize, a trust-region method with maximal 
		radius \cite{yuan2015recent}, or a cubic regularization method with an appropriate regularization parameter \cite{nesterov2006cubic}.
		For the other situations, 
		we can prefix a large $\zeta$ as a loose upper-bound for the distance between two consecutive iterates. Additionally, when $\{x_k\}$ has sequential convergence, the restriction $\norm{x_{k+1} - x_k} \leq \zeta$ naturally holds for any sufficiently large $k$.  
	\end{rmk}

	\section{Implementation}
	\label{Section_Implementation}
	In this section, we first show that we can construct the constraint dissolving operator $\A$ directly from $c(x)$ without any prior knowledge of the geometrical properties of $\M$. In addition, we provide easy-to-compute formulations of $\A$ for several well-known Riemannian manifolds, such as the Stiefel manifold, the Grassmann manifold, the symplectic Stiefel manifold, the hyperbolic manifold, etc. 
	Moreover, we provide an illustrative example of selecting the momentum-accelerated cubic regularization method as the unconstrained optimization approach in Algorithm \ref{Alg:Constraint_Dissloving}, and establish its convergence properties directly from existing works.

	\subsection{Construction of constraint dissolving operators}

	When we have no prior knowledge on the constraints $c(x) = 0$ in \ref{Prob_Ori}, we can consider the following mapping 
	\begin{equation}
		\label{Operator_A}
		\A_{c}(x):= x -  \Jc(x) \left( \Jc(x)\tp \Jc(x) + \alpha \norm{c(x)}^2 I_p \right)^{-1}  c(x).
	\end{equation}  
	It is worth mentioning that $\Jc(x)$ may be rank-deficient for some $x \in \bb{R}^n \setminus \M$ \cite{fletcher2002nonlinear,estrin2020implementing}, resulting in the discontinuity of $\Jc(x)^{\dagger}$. To this end, we choose to add a regularization term to $\Jc(x)\tp \Jc(x)$ with a prefixed constant $ \alpha > 0$ in \eqref{Operator_A}.
	Then $\A_{c}$ is locally Lipschitz smooth in $\bb{R}^n$ and we can consider the following penalty function, 
	\begin{equation}
		\label{Penalty_Expen_fletcher}
		h_c(x) := f(\A_{c}(x)) + \frac{\beta}{2} \norm{c(x)}^2. 
	\end{equation}
	The following lemma illustrates that $\A_{c}$ satisfies Assumption \ref{Assumption_2}. 
	\begin{lem}
		Suppose $c$ is twice locally Lipschitz continuously differentiable, then the constraint dissolving operator $\A_{c}$ satisfies Assumption \ref{Assumption_2}. 
	\end{lem}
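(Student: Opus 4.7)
The plan is to verify the three bullets of Assumption \ref{Assumption_2} for $\A_c$ in turn, treating the smoothness as a preliminary that unlocks the differential calculation needed for the third bullet. Let me abbreviate $G(x) := \Jc(x)\tp \Jc(x) + \alpha \norm{c(x)}^2 I_p$ so that $\A_c(x) = x - \Jc(x) G(x)^{-1} c(x)$.

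For the first bullet, the crux is that $G(x)$ is invertible for every $x \in \bb{R}^n$. I would split into two cases: if $c(x) \neq 0$ then $\alpha \norm{c(x)}^2 I_p \succ 0$ and $\Jc(x)\tp \Jc(x) \succeq 0$, so $G(x) \succ 0$; if $c(x) = 0$ then $x \in \M$ and the LICQ part of Assumption \ref{Assumption_1} forces $\Jc(x)\tp \Jc(x) \succ 0$, so again $G(x) \succ 0$. Since $c$ is twice locally Lipschitz continuously differentiable, $\Jc$ and hence $G$ are locally Lipschitz smooth, and matrix inversion is smooth on the open cone of positive definite matrices, so $G^{-1}$ inherits local Lipschitz smoothness. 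Multiplying by the locally Lipschitz smooth factors $\Jc(x)$ and $c(x)$ yields local Lipschitz smoothness of $\A_c$.

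For the second bullet, when $x \in \M$ we have $c(x) = 0$, so the correction term vanishes and $\A_c(x) = x$ follows by direct substitution.

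The third bullet is the main technical step. I would compute the directional derivative of $\A_c$ at $x \in \M$ along an arbitrary $d \in \bb{R}^n$ via
\begin{equation*}
\tfrac{d}{dt}\Big|_{t=0} \A_c(x+td) = d - \tfrac{d}{dt}\Big|_{t=0}\bigl[\Jc(x+td)\,G(x+td)^{-1}\,c(x+td)\bigr].
\end{equation*}
Applying the product rule and using $c(x) = 0$ kills the two terms where the derivative hits $\Jc$ or $G^{-1}$, leaving only $\Jc(x) G(x)^{-1} \Jc(x)\tp d$; noting $G(x) = \Jc(x)\tp \Jc(x)$ on $\M$, this reduces to the orthogonal projector onto $\Nx$ applied to $d$. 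Consequently the standard Jacobian of $\A_c$ at $x$ equals $P_x := I - \Jc(x)(\Jc(x)\tp \Jc(x))^{-1}\Jc(x)\tp$, which is symmetric, so $\Ja_c(x) = P_x\tp = P_x$. Then $\Ja_c(x)\Jc(x) = \Jc(x) - \Jc(x)(\Jc(x)\tp \Jc(x))^{-1}\Jc(x)\tp \Jc(x) = 0$, as required.

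The only place that needs real care is the smoothness argument, specifically the invertibility of $G(x)$ at infeasible points where $\Jc(x)$ may drop rank, since this is precisely the motivation for introducing the $\alpha \norm{c(x)}^2 I_p$ term. Once that is in place, the remaining verifications are direct calculations that rely on nothing more than the vanishing of $c(x)$ on $\M$.
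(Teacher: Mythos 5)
Your proposal is correct and follows essentially the same route as the paper's proof: verify the three bullets of Assumption \ref{Assumption_2} directly, with the key step being that the Jacobian of $\A_c$ at a feasible point reduces to $I_n - \Jc(x)\Jc(x)^\dagger$, which annihilates $\Jc(x)$. You simply supply more detail than the paper does, in particular the case analysis showing $\Jc(x)\tp\Jc(x) + \alpha\norm{c(x)}^2 I_p$ is invertible everywhere and the explicit product-rule computation using $c(x)=0$; both are correct and consistent with what the paper leaves implicit.
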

	\begin{proof}
	The Lipschitz smoothness of $\A_{c}$ is guaranteed by the twice locally Lipschitz continuous differentiability of $c$. In addition, for any $x \in \M$, it follows from the equality $c(x) = 0$ that the equality $\A_{c}(x) =  x$ holds. Moreover, according to the fact that $J_{\A_{c}}(x) = I_n - \Jc(x) \Jc(x)^\dagger$ holds for any $x \in \M$, we obtain
	\begin{equation}
		J_{\A_{c}}(x) \Jc(x)  = \Jc(x) - \Jc(x) \Jc(x)^\dagger \Jc(x) = 0.
	\end{equation}
	Therefore, we can conclude that $\A_{c}$ satisfies Assumption \ref{Assumption_2}. 
	\end{proof}
	
	\medskip
	The mapping $\A_c$ in \eqref{Penalty_Expen_fletcher} only depends on $\Jc(x)$. As a result, for a wide range of Riemannian manifolds, we can develop the corresponding constraint dissolving function without any prior knowledge on the geometrical properties of $\M$.
	
	On the other hand, for several Riemannian manifolds 
	with explicit expressions, which are widely used in real life,
	we can choose specific constraint dissolving operators that are easy to calculate. We present the details in Table \ref{Table_implementation}. It can be easily verified that all the constraint dissolving operators presented in Table \ref{Table_implementation} satisfy Assumption \ref{Assumption_2}, and we omit the proofs for simplicity.
	Moreover, calculating these operators $\A$ and the corresponding $\Ja(x)$ only involve matrix-matrix multiplications. This implies that it is efficient to compute $\nabla h$ 
	once $\nabla f$ is obtained. In particular, compared with the Fletcher's penalties, \ref{Prob_Pen} avoids the needs to solve a system of linear equations in each function evaluation by appropriately selecting the the constraint dissolving operators
	for a variety of Riemannian manifolds in Table \ref{Table_implementation}.

	\begin{table}[htbp]
		\caption{Implementation of $\A$ for several Riemannian manifolds. Here ${\bf 0}_{m\times m}$ denotes the $m$-th order zero matrix, and $X^H$ denotes the conjugate transpose of a complex matrix $X$. }
		
		\label{Table_implementation}
		\footnotesize
		\centering
		\begin{tabular}{|lll|}
			\hline
			Name of the manifold & Expression of $\M$ & Possible choice of $\A$  \\ \hline
			Sphere & $\left\{ x \in \bb{R}^{n}:  x\tp x = 1 \right\}$ &  $x \mapsto 2x/(1 + \norm{x}_2^2)$ \\ \hline
			Oblique manifold & $\left\{ X \in \bb{R}^{m\times s}: \mathrm{Diag} (X \tp X) = I_s \right\}$ &  $X \mapsto 2X\left( I_s + \mathrm{Diag}(X\tp X) \right)^{-1} $ \\ \hline
			Stiefel manifold &$\left\{ X \in \bb{R}^{m\times s}: X \tp X = I_s \right\}$& $X \mapsto X\left( \frac{3}{2}I_s - \frac{1}{2} X\tp X \right)$ \cite{xiao2021solving}\\ \hline
			Complex Stiefel manifold & $\left\{ X \in \bb{C}^{m\times s}: X^H X = I_s \right\}$  & $X \mapsto X\left( \frac{3}{2}I_s - \frac{1}{2} X^H X \right)$\\ \hline
			\multirow{2}{*}{Generalized Stiefel manifold } &  $\left\{ X \in \bb{R}^{m\times s}: X \tp B X = I_s \right\}$ for some  & \multirow{2}{*}{$X \mapsto X\left( \frac{3}{2}I_s - \frac{1}{2} X\tp B X \right)$  } \\
			&  positive definite $B$ & \\ \hline
			Grassmann manifold &$\left\{ \mathrm{range}(X): X \in \bb{R}^{m\times s}, X \tp X = I_s \right\}$& $X \mapsto X\left( \frac{3}{2}I_s - \frac{1}{2} X\tp X \right)$ \cite{xiao2021solving}\\ \hline
			Complex Grassmann manifold & $\left\{ \mathrm{range}(X): X \in \bb{C}^{m\times s}, X^H X = I_s \right\}$  & $X \mapsto X\left( \frac{3}{2}I_s - \frac{1}{2} X^H X \right)$\\ \hline
			\multirow{2}{*}{Generalized Grassmann manifold } &  $\left\{ \mathrm{range}(X): X \in \bb{R}^{m\times s}, X \tp B X = I_s \right\}$   & \multirow{2}{*}{$X \mapsto X\left( \frac{3}{2}I_s - \frac{1}{2} X\tp B X \right)$  } \\
			& for some positive definite $B$ & \\ \hline
			\multirow{2}{*}{Hyperbolic manifold \cite{bai2014minimization} } &  $\left\{ X \in \bb{R}^{m\times s}: X \tp B X = I_s \right\}$ for some $B$ & \multirow{2}{*}{$X \mapsto X\left( \frac{3}{2}I_s - \frac{1}{2} X\tp B X \right)$  } \\
			& that satisfies $\lambda_{\min}(B) < 0 < \lambda_{\max}(B)$ & \\ \hline
			\multirow{2}{*}{Symplectic Stiefel manifold \cite{son2021symplectic}} &  $\left\{ X \in \bb{R}^{2m\times 2s}: X \tp Q_m X = Q_s \right\}$ & \multirow{2}{*}{$X \mapsto X \left(\frac{3}{2}I_{2s} + \frac{1}{2}Q_sX\tp Q_mX \right)$}\\  
			&$Q_m := \left[ \begin{smallmatrix}
				{\bf 0}_{m\times m} & I_m\\
				-I_m & {\bf 0}_{m\times m}
			\end{smallmatrix}\right]$  & \\ \hline
			\multirow{2}{*}{Quadratic matrix Lie groups \cite{zhang2020riemannian}} &  $\left\{ X \in \bb{R}^{m\times m}:  X\tp R_m X = R_m \right\}$ & \multirow{2}{*}{$X \mapsto X  - \frac{1}{2}X \left(  X\tp R_m\tp  X R_m - I_m \right)$}\\  
			&$R_m^2 = \nu I_m,  R_m\tp = \nu R_m$ for $\nu = \pm 1$ & \\ \hline
		\end{tabular}
		
	\end{table}

		\subsection{Choosing penalty parameter $\beta$ for \ref{Prob_Pen}}
		In this subsection, we discuss how to choose the penalty parameter for \ref{Prob_Pen}. The following propositions illustrate that for any local minimizer $x$ of \ref{Prob_Ori}, we can choose a sufficiently large penalty parameter $\beta$ to guarantee that $x$ is also a local minimizer for \ref{Prob_Pen} through a practical formula. 
		
		\begin{prop}
			\label{Prop_finite_estimate_beta}
			For any $x \in \M$, it holds that
			\begin{equation*}
				\sup_{y \in \Omegax{x}\setminus \M}\left\{  \max\left\{\frac{f(\A^2(y)) - f(\A(y))}{\norm{c(y)}^2 - \norm{c(\A(y))}^2  }, 0  \right\} \right\} \leq \frac{32\Lf (\Ma +1)\Lac }{3\Lsc ^3}.
			\end{equation*}
		\end{prop}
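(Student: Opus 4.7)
The plan is to bound the numerator $f(\A^2(y)) - f(\A(y))$ and the denominator $\norm{c(y)}^2 - \norm{c(\A(y))}^2$ independently, and then combine. The overall shape of the argument is that the numerator is $\mathcal{O}(\norm{c(y)}^2)$ while the denominator is at least a constant fraction of $\norm{c(y)}^2$, so the ratio is uniformly bounded by the stated constant. The proof of the $\max\{\cdot,0\}$ form just trivializes the case when the denominator is negative or the numerator is nonpositive, so I only need to bound the ratio when both are positive.

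For the numerator, I would view $f(\A^2(y)) - f(\A(y))$ as the displacement of $f$ between two points that are both close to $\M$ and close to each other. A Lipschitz-type estimate using the constant $\Lf$ (valid in a neighborhood that contains $\A(y)$ and $\A^2(y)$, since $y\in\Omegax{x}\subset\Theta_x$ and $\A$ keeps points close to the feasible region) gives
\begin{equation*}
f(\A^2(y)) - f(\A(y)) \;\leq\; \Lf\,\norm{\A(\A(y)) - \A(y)}.
\end{equation*}
Applying Lemma~\ref{Le_Ax_c} at the point $\A(y)$ bounds the inner norm by $\frac{2(\Ma+1)}{\Lsc}\norm{c(\A(y))}$, and then Lemma~\ref{Le_A_secondorder_descrease} bounds $\norm{c(\A(y))}$ by $\frac{4\Lac}{\Lsc^2}\norm{c(y)}^2$. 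Chaining these three estimates yields
\begin{equation*}
f(\A^2(y)) - f(\A(y)) \;\leq\; \frac{8\Lf(\Ma+1)\Lac}{\Lsc^3}\,\norm{c(y)}^2.
\end{equation*}

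For the denominator, Lemma~\ref{Le_A_secondorder_descrease} gives $\norm{c(\A(y))}^2 \leq \frac{16\Lac^2}{\Lsc^4}\norm{c(y)}^4$. Here the key is that $y\in\Omegax{x}$ already forces $\norm{c(y)}$ to be small: Lemma~\ref{Le_bound_cx} combined with the definition of $\varepsilon_x \leq \frac{\Lsc^2}{8\Lac\Mc}$ yields $\norm{c(y)} \leq \Mc\varepsilon_x \leq \frac{\Lsc^2}{8\Lac}$, and hence $\frac{16\Lac^2}{\Lsc^4}\norm{c(y)}^2 \leq \frac{1}{4}$. Therefore $\norm{c(y)}^2 - \norm{c(\A(y))}^2 \geq \tfrac{3}{4}\norm{c(y)}^2$. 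Combining this with the numerator bound cancels the $\norm{c(y)}^2$ factor and produces exactly $\tfrac{32\Lf(\Ma+1)\Lac}{3\Lsc^3}$, as desired.

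The routine parts are the two norm inequalities and the arithmetic of the constants; the only subtle step is justifying the Lipschitz bound $|f(\A^2(y))-f(\A(y))| \leq \Lf\,\norm{\A^2(y)-\A(y)}$ with the particular constant $\Lf = \sup_{y\in\Theta_x}\norm{\nabla f(\A(y))}$. To make this fully rigorous, I expect to need a short verification that both $\A(y)$ and $\A^2(y)$ lie in the image $\A(\Theta_x)$ (or that the connecting segment stays in a region where $\norm{\nabla f}$ is controlled by $\Lf$). This is the only place where the specific smallness of $\varepsilon_x$ beyond the denominator estimate gets used, and it is the main technical hurdle; the rest is straightforward bookkeeping.
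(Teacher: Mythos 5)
Your proposal is correct and follows essentially the same route as the paper's proof: bound the numerator by $\frac{8\Lf(\Ma+1)\Lac}{\Lsc^3}\norm{c(y)}^2$ via the Lipschitz estimate together with Lemmas \ref{Le_Ax_c} and \ref{Le_A_secondorder_descrease}, bound the denominator below by $\frac{3}{4}\norm{c(y)}^2$ via $\norm{c(\A(y))}\leq\frac{1}{2}\norm{c(y)}$, and take the quotient. The subtlety you flag about justifying the $\Lf$-Lipschitz bound (and, relatedly, that $\A(y)$ stays in the relevant neighborhood so the lemmas apply at $\A(y)$) is present but left implicit in the paper's own argument as well, so your treatment is, if anything, slightly more careful.
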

		\begin{proof}
		
		For any $x \in \M$ and  any $y \in \Omegax{x}$, it holds directly  from Lemma \ref{Le_A_secondorder_descrease}  that 
		\begin{equation*}
			\norm{c(\A(\y))} \leq \frac{4\Lac }{\Lsc ^2}\norm{c(\y)}^2 \leq \frac{1}{2} \norm{c(y)},
		\end{equation*}
		which leads to 
		\begin{equation}
			\label{Eq_Prop_finite_estimate_beta_0}
			\norm{c(y)}^2 - \norm{c(\A(y))}^2 \geq \frac{3}{4} \norm{c(y)}^2. 
		\end{equation}
		
		On the other hand, it follows from Lemma \ref{Le_Ax_c} and  Lemma \ref{Le_A_secondorder_descrease}  that
		\begin{equation}
			\label{Eq_Prop_finite_estimate_beta_1}
			\begin{aligned}
				&|f(\A^{2}(\y)) - f(\A(\y))| 
				\leq \Lf  \norm{\A^{2}(\y) - \A(\y)} \leq \frac{2\Lf (\Ma +1)}{\Lsc }\norm{c(\A(\y))}\\
				\leq{}&  \frac{8\Lf (\Ma +1)\Lac }{\Lsc ^3} \norm{c(\y)}^2.
			\end{aligned}
		\end{equation}
		
		Therefore, for any $y \in \Omegax{x} \setminus \M$,  from \eqref{Eq_Prop_finite_estimate_beta_0} and \eqref{Eq_Prop_finite_estimate_beta_1} we get
		\begin{equation*}
			\left|\frac{f(\A^2(y)) - f(\A(y))}{\norm{c(y)}^2 - \norm{c(\A(y))}^2  }\right| \leq \frac{\frac{8\Lf (\Ma +1)\Lac }{\Lsc ^3} \norm{c(\y)}^2}{\frac{3}{4} \norm{c(y)}^2} \leq \frac{32\Lf (\Ma +1)\Lac }{3\Lsc ^3},
		\end{equation*}
		which illustrates that $\sup\limits_{y \in \Omegax{x}\setminus \M}\left\{  \max\left\{\frac{f(\A^2(y)) - f(\A(y))}{\norm{c(y)}^2 - \norm{c(\A(y))}^2  }, 0  \right\} \right\} \leq \frac{32\Lf (\Ma +1)\Lac }{3\Lsc ^3}$ and  completes the proof. 
		\end{proof}
		
		\medskip
		\begin{prop}
			\label{Prop_esti_beta}
			For any local minimizer $x$ of \ref{Prob_Ori}, suppose the penalty parameter $\beta$ in \ref{Prob_Pen} satisfies 
			\begin{equation}
				\label{Eq_Prop_esti_beta_2}
				\beta \geq 2\cdot \sup_{y \in \Omegax{x}\setminus \M}\left\{  \max\left\{\frac{f(\A^2(y)) - f(\A(y))}{\norm{c(y)}^2 - \norm{c(\A(y))}^2  }, 0  \right\} \right\},
			\end{equation}
			then it holds that $x$ is a local minimizer for \ref{Prob_Pen}. 
		\end{prop}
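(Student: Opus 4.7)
The plan is to show that $h(y)\geq h(x)$ for every $y$ in a sufficiently small neighborhood of $x$. Since $x\in\M$, Assumption \ref{Assumption_2} gives $\A(x)=x$ and $c(x)=0$, so $h(x)=f(x)$. Because $x$ is a local minimizer of \ref{Prob_Ori}, there is a neighborhood $\ca{U}\subset\M$ of $x$ on which $f(z)\geq f(x)$ for every $z\in\ca{U}$. I will shrink the neighborhood of $x$ so that (i) $y\in\BOmegax{x}$, so Lemma \ref{Le_dist_Ainfty} guarantees $\A^{\infty}(y)\in\Omegax{x}\cap\M$ with the distance bound $\norm{\A^{\infty}(y)-y}\leq \frac{4(\Ma+1)}{\Lsc}\norm{c(y)}$; (ii) the whole orbit $\{\A^k(y)\}_{k\geq 0}$ remains in $\Omegax{x}$; and (iii) $\A^{\infty}(y)\in\ca{U}$, which holds once $\norm{c(y)}$ is small enough by the displayed distance bound.

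The central observation is that the value $h(\A^k(y))$ is monotonically nonincreasing in $k$ under the hypothesis on $\beta$. Indeed, for any $z\in\Omegax{x}$, a direct computation gives
\begin{equation*}
h(z)-h(\A(z)) \;=\; \bigl[f(\A(z))-f(\A^2(z))\bigr] \;+\; \tfrac{\beta}{2}\bigl(\norm{c(z)}^2-\norm{c(\A(z))}^2\bigr).
\end{equation*}
By Lemma \ref{Le_A_secondorder_descrease}, shrinking the neighborhood further ensures $\norm{c(\A(z))}\leq \tfrac{1}{2}\norm{c(z)}$, hence $\norm{c(z)}^2-\norm{c(\A(z))}^2\geq \tfrac{3}{4}\norm{c(z)}^2\geq 0$. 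When $z\in\M$ both sides vanish; when $z\in\Omegax{x}\setminus\M$, the hypothesis \eqref{Eq_Prop_esti_beta_2} on $\beta$ gives
\begin{equation*}
\frac{\beta}{2}\;\geq\; \max\!\left\{\frac{f(\A^2(z))-f(\A(z))}{\norm{c(z)}^2-\norm{c(\A(z))}^2},\,0\right\},
\end{equation*}
which yields $h(z)\geq h(\A(z))$ in both cases.

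Iterating this inequality along the orbit and using the continuity of $h$ together with convergence $\A^k(y)\to\A^{\infty}(y)$ guaranteed by Lemma \ref{Le_dist_Ainfty}, I obtain
\begin{equation*}
h(y)\;\geq\; h(\A(y))\;\geq\;\cdots\;\geq\; h(\A^{\infty}(y))\;=\;f(\A^{\infty}(y))\;\geq\; f(x)\;=\;h(x),
\end{equation*}
where the penultimate equality uses $\A^{\infty}(y)\in\M$ (so $c$ vanishes and $\A$ fixes it), and the last inequality uses step (iii) together with local minimality of $x$ for \ref{Prob_Ori}. This proves that $x$ is a local minimizer of \ref{Prob_Pen}.

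The main obstacle is the bookkeeping needed to ensure that the orbit $\{\A^k(y)\}_{k\geq 0}$ remains in $\Omegax{x}$ and that $\A^{\infty}(y)$ lands inside the neighborhood $\ca{U}$ furnished by local minimality. Both are controlled by the distance bound in Lemma \ref{Le_dist_Ainfty} and the quadratic contraction of $\norm{c(\cdot)}$ in Lemma \ref{Le_A_secondorder_descrease}, so the required neighborhood of $x$ can be explicitly specified in terms of $\BOmegax{x}$, the size of $\ca{U}$, and the Lipschitz-type constants; once this is in place, the telescoping argument above closes the proof.
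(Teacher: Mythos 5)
Your proposal is correct and follows essentially the same route as the paper: both derive the one-step descent $h(\A(y))\leq h(y)$ from the hypothesis on $\beta$ (using Lemma \ref{Le_A_secondorder_descrease} to control the sign of $\norm{c(y)}^2-\norm{c(\A(y))}^2$), telescope along the orbit via Lemma \ref{Le_dist_Ainfty} to get $f(\A^{\infty}(y))\leq h(y)$, and then invoke local minimality of $x$ on $\M$. The paper merely packages the final step as a contradiction with a sequence $y_i\to x$ rather than your direct neighborhood argument; the content is identical.
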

		\begin{proof} We prove this proposition by contradiction. Suppose $x$ is not a local minimizer for \ref{Prob_Pen}. Then there exists a sequence $\{y_i\} \subset \BOmegax{x}$ such that $\{y_i\} \to x$ and $h(y_i) < h(x)$ holds for any $i \geq 1$.

		From \eqref{Eq_Prop_esti_beta_2},  it is easy to verify that the following inequality holds for any $y \in \Omegax{x}$, 
		\begin{equation}
			\label{Eq_Prop_esti_beta_0}
			h(\A(y)) = f(\A^2(y)) + \frac{\beta }{2} \norm{c(\A(y))}^2 \leq f(\A(y)) + \frac{\beta}{2}\norm{c(y)}^2  =h(y). 
		\end{equation}
		Notice that Lemma \ref{Le_dist_Ainfty} implies that $\A^k(y) \in \Omegax{x}$ holds for any $y \in \BOmegax{x}$ and any $k \geq 1$. Therefore, together with \eqref{Eq_Prop_esti_beta_0}, we can conclude that for any $y \in \BOmegax{x}$, it holds that 
		\begin{equation}
			\label{Eq_Prop_esti_beta_1}
			f(\A^{\infty}(y)) = h(\A^{\infty}(y)) \leq h(\A(y)) \leq h(y). 
		\end{equation}

		From \eqref{Eq_Prop_esti_beta_1} and Lemma \ref{Le_dist_Ainfty}, it holds that $\{\A^{\infty}(y_i)\} \to x$ and  for any $i \geq 1$, $f(\A^{\infty}(y_i)) \leq h(y_i) < h(x)$, 
		which contradicts the fact that $x$ is a local minimizer of \ref{Prob_Ori}. Therefore, from the contradiction, we can conclude that $x$ is a local minimizer for \ref{Prob_Pen} and complete the proof. 
		\end{proof}

		\medskip
		\begin{rmk}
			\label{Rmk_esti_beta}
			As illustrated in Proposition \ref{Prop_finite_estimate_beta},  the right-hand-side of \eqref{Eq_Prop_esti_beta_2} is upper-bounded by a constant. Moreover, based on Proposition \ref{Prop_esti_beta}, we suggest the following procedure to choose a penalty parameter for \ref{Prob_Pen}. We first choose a reference point $\tilde{x} \in \M$ and randomly sample $N_{\beta}$ points  $\{x^{ref}_1,...,x^{ref}_{N_{\beta}}\} \in \ca{B}(\tilde{x}, \delta_{\beta})$  where $\ca{B}(z, \delta_{\beta}):= \{ x \in \bb{R}^n: \norm{x - z} \leq \delta_{\beta} \}$. Then we compute an estimated value for $\beta$ as follows:
			\begin{equation}
				\label{Eq_esti_beta}
				\beta_{esti} = 2\theta_{\beta} \cdot \max_{1\leq i\leq N_{\beta}}\left\{  \max\left\{\frac{f(\A^2(x^{ref}_i)) - f(\A(x^{ref}_i))}{\left|\norm{c(x^{ref}_i)}^2 -\norm{c(\A(x^{ref}_i))}^2 \right|  + \varepsilon_{\beta} }, 0  \right\} \right\}. 
			\end{equation} 
			Here $\delta_{\beta}>0$, $\varepsilon_{\beta} \geq  0$ and $\theta_{\beta} \geq 1$ are some prefixed hyper-parameters. 
		\end{rmk}

	\subsection{Comparison with existing penalty approaches}
	We summarize the differences between \ref{Prob_Pen} and Fletcher's penalty function \eqref{Penalty_function_Fletcher} in Table \ref{Table_compare}.  As mentioned in the introduction, Fletcher's penalty function involves $\nabla f$ in its function value.
	Therefore, the cost of computing its exact gradient is similar to
	computing the Hessian of  \ref{Prob_Pen}, meanwhile, calculating the
	Hessian of Fletcher's penalty function is usually intractable.

	\begin{table}[htbp]
		\caption{Differences between \ref{Prob_Pen} and Fletcher's penalty function.}
		\label{Table_compare}
		\centering
		\scriptsize
		\begin{tabular}{|lll|}
			\hline
			Objective function $f$ in \ref{Prob_Ori} & \ref{Prob_Pen} & Fletcher's penalty function \eqref{Penalty_function_Fletcher} \\ \hline
			Bounded below           &  Bounded below           &   Not bounded below            \\ 
			Lipschitz smooth 		&  Lipschitz smooth        &   Lipschitz continuous        \\
			Twice differentiable    &  Twice differentiable    &   Differentiable       \\
			$\nabla f$ is available &  $h$ and $\nabla h$ are achievable & Only $\phi$ is achievable\\
			$\nabla f$ and $\nabla^2 f$ are available &  $h$, $\nabla h$ and $\nabla^2 h$ are achievable & Only $\phi$ and $\nabla \phi$ are achievable\\\hline
		\end{tabular}
		
	\end{table}

	On the other hand, the constraint dissolving operator $\A$ that satisfies Assumption \ref{Assumption_2}, its transposed Jacobian $\Ja(x)$ is not necessary symmetric. As a result, from the expression of $\nabla f(x)$ presented in Proposition \ref{Prop_gradient_h}, $\nabla h(x)$ is not necessarily in $\Tx$ even when $x \in \M$. However, from the expression of Fletcher's penalty function in \eqref{Penalty_function_Fletcher}, $\nabla \phi(x) = \grad f(x) \in \Tx$ holds for any given $x \in \M$.

	To further illustrate the differences between \ref{Prob_Pen} and Fletcher's penalty function, we provide an example by considering a problem in $\bb{R}^2$ that minimizes $f(w):= \norm{w-[1, 1]\tp}^2$ over the constraint $w\tp C w = 1$ with $C:= \Diag(1,-1)$. As illustrated  in Section 4.1,  the mapping $\A: w \mapsto w - \frac{1}{2}w(w\tp C w -1)$ satisfies Assumption \ref{Assumption_2} and thus its constraint dissolving function $h(w):=f(w - \frac{1}{2}w(w\tp C w -1) ) + \frac{\beta}{2} (w\tp C w -1)^2$  shares the same first-order stationary points with itself. We plot the contours of $h(w)$, together with the contours of the corresponding Fletcher's penalty function in  Figure \ref{Fig_contour}. 
	These figures illustrate that even when $w$ is feasible, $\nabla h (w)$ is not necessarily contained in the tangent space of the feasible set. Therefore, $\nabla h(x)$ is independent of the Riemannian gradient of $f$ even when 
	$x$ is feasible, which further illustrates that minimizing \ref{Prob_Pen} can completely waive the computation of geometrical materials of the Riemannian manifold $\M$. However, the expression of the corresponding Fletcher's penalty function forces $\nabla \phi(x) = \grad f(x) \in \Tx$ for any given $x \in \M$. That is, for any $x \in \M$, computing $\nabla \phi(x)$ is equivalent to computing the Riemannian gradient of $f$ at $x$. 
	
	\begin{figure}[tbp]
		\caption{The contours of \ref{Prob_Pen} and \eqref{Penalty_function_Fletcher} with $\beta = 1$ for $ w \in  [0,2] \times [-0.5,1.5]$, where a lighter contour corresponds to a higher function value. The red lines denote the feasible set. 
			(a) The contours of $ h (w)$; (b) The contours of Fletcher's penalty function. }
		\label{Fig_contour}
		\centering
		\subfigure[]{
			\begin{minipage}[t]{0.45\linewidth}
				\centering
				\includegraphics[width=\linewidth]{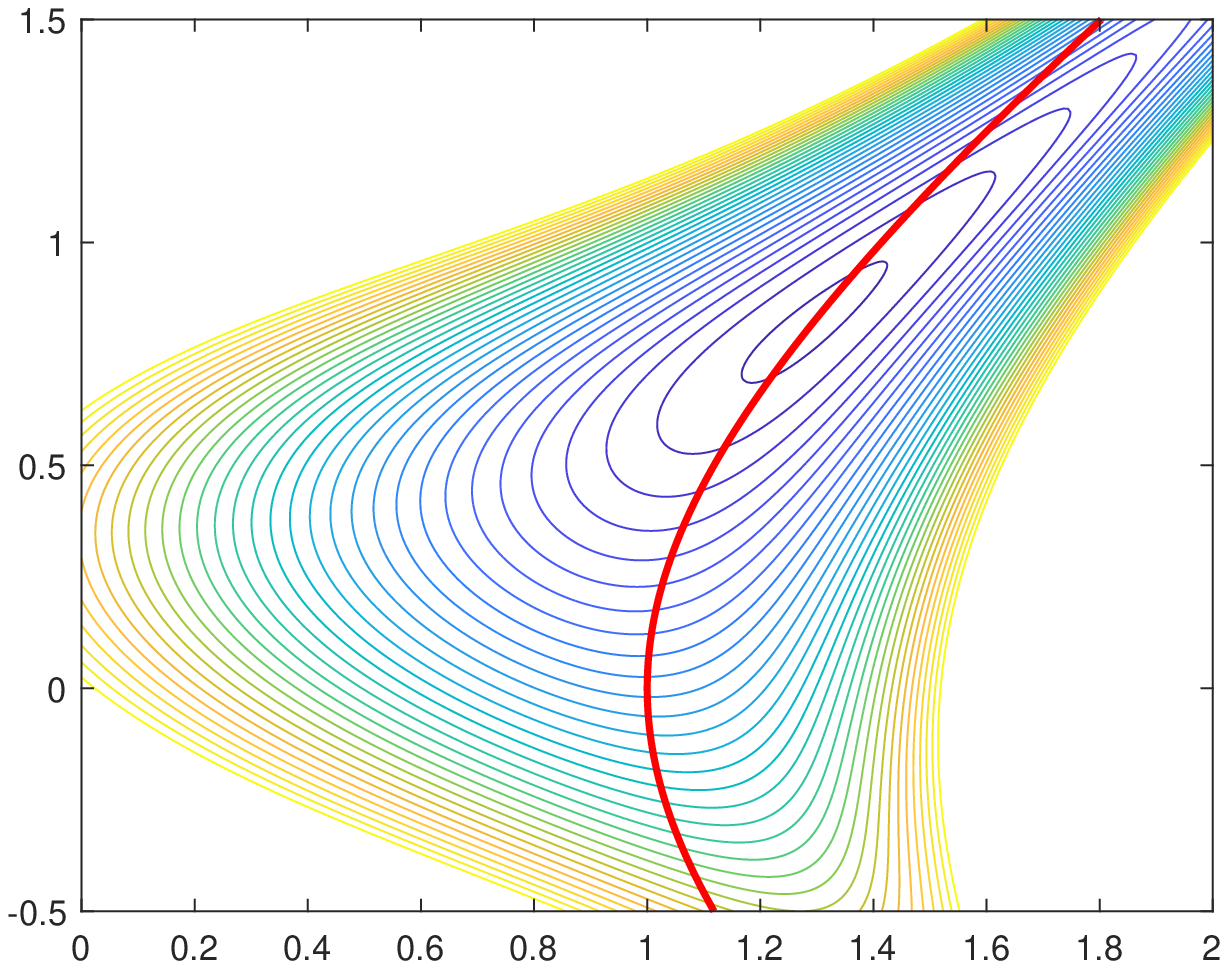}
				\label{Fig_contour_large}
			\end{minipage}%
		}%
		\subfigure[]{
			\begin{minipage}[t]{0.45\linewidth}
				\centering
				\includegraphics[width=\linewidth]{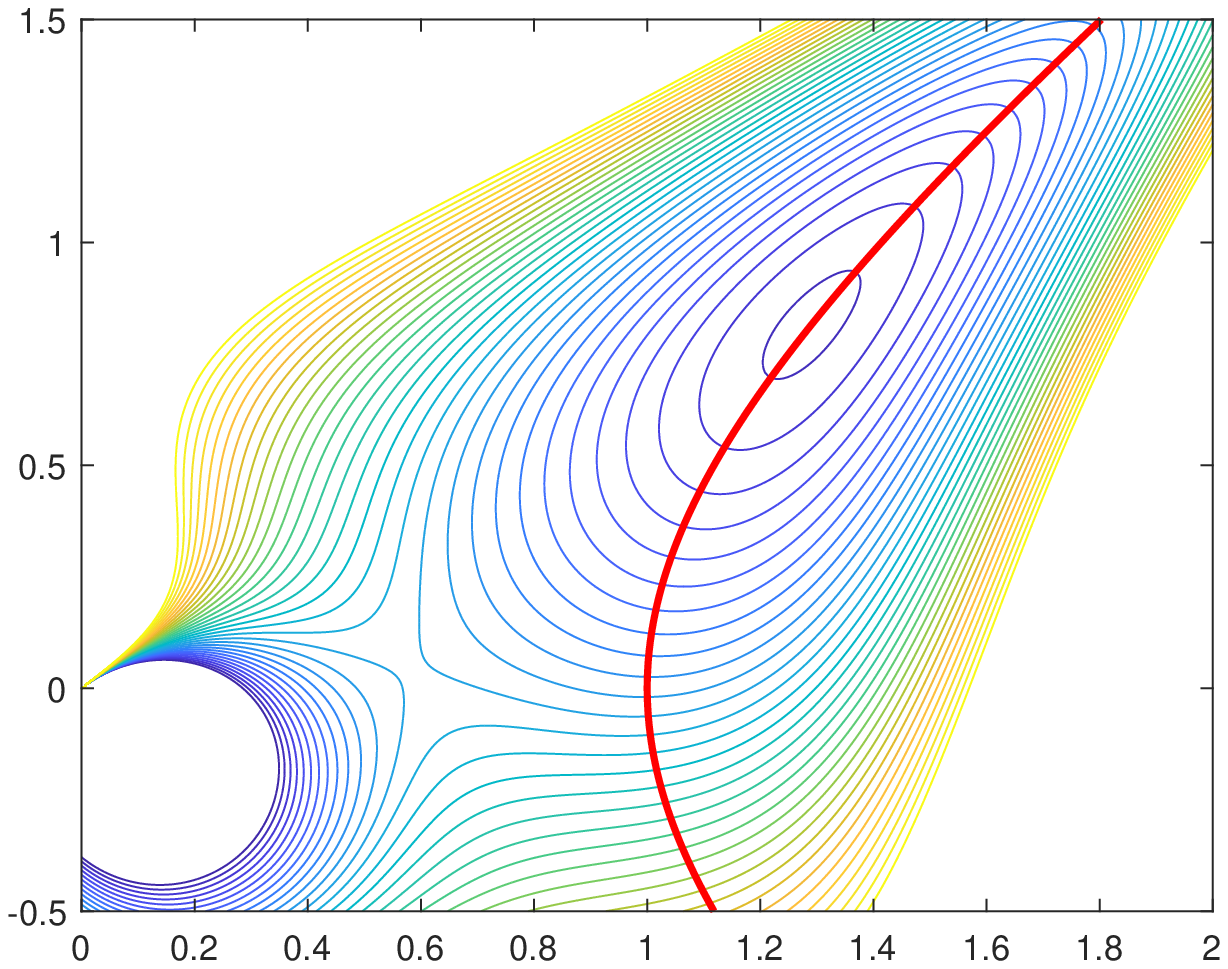}
				\label{Fig_contour_tiny}
			\end{minipage}%
		}%
		
	\end{figure}

	Furthermore, when $f$ is bounded below, it is easy to verify that \ref{Prob_Pen} is bounded below in $\bb{R}^n$. Meanwhile, Fletcher's penalty function does not have this property. As illustrated in Figure \ref{Fig_contour}, Fletcher's penalty function can be unbounded below in $\bb{R}^n$ even if the objective function is bounded below. Moreover, Fletcher's penalty function is not well defined in $\bb{R}^n$ since the Jacobian of the constraint is singular at $[0,0]\tp$.

	\subsection{Example}
	In this subsection, we present a representative example to illustrate how to apply \ref{Prob_Pen} to solve \ref{Prob_Ori} by the routine in Algorithm \ref{Alg:Constraint_Dissloving}, and that it  inherits all the convergence properties from  the selected unconstrained optimization approach. Moreover, we present several supplementary examples in Appendix \ref{Section_Supplementary_Examples}. These supplementary examples further illustrate that the proposed constraint dissolving approaches enable us to directly employ various existing efficient unconstrained solvers to \ref{Prob_Pen}.
	
	Before we start the proof, we first define several constants based on Assumption \ref{Assumption_3} for any given $x_0 \in \M$ and any $\zeta > 0$:

	\begin{itemize}
		\item $\tLsc:= \inf_{x \in \Gamma_{x_0, \zeta}\cap \bar{\Omega}} \Lsc$;
		\item $\tMa:= \sup_{x \in \Gamma_{x_0, \zeta}\cap \bar{\Omega}} \Ma$;
		\item $\tMc:= \sup_{x \in \Gamma_{x_0, \zeta}\cap \bar{\Omega}} \Mc$;
		\item $\tLc:= \sup_{x \in \Gamma_{x_0, \zeta}\cap \bar{\Omega}} \Lc$;
		\item $\tLa:= \sup_{x \in \Gamma_{x_0, \zeta}\cap \bar{\Omega}} \La$.
	\end{itemize}

	Recently, there is a growing interest in designing algorithms that can escape from saddle points in unconstrained nonconvex optimization. Among these approaches, the cubic regularization Newton’s method is a popular optimization algorithm. Recently, \cite{wang2020cubic} proposed a cubic regularization method with momentum (CRm), which achieves the best possible convergence rate to a second-order stationary point for nonconvex optimization. However, transferring CRm into its Riemannian version by the framework from \cite{Absil2009optimization} requires deep modifications to the original framework, since solving the cubic step in tangent space requires specially designed solvers, computing the momentum step involves vector transports on the Riemannian manifold, and retractions should also be introduced to enforce the feasibility of the iterates. Noting that the iterates are not updated along geodesics, and the momentum steps involve vector transports, we need great efforts in establishing the convergence properties for the Riemannian version of CRm. 
	
	Alternatively, we can directly apply CRm algorithm to solve \ref{Prob_Ori} through \ref{Prob_Pen}. The detailed algorithm is presented in Algorithm \ref{Alg:CRM}.

	\begin{algorithm}[htbp]
		\begin{algorithmic}[1]   
			\Require Input data:  functions $f$, $\rho < 1$, $\nu$.
			\State Choose initial guess $x_0 \in \M$, and $\beta \geq \bar{\beta}_{x_0}$ according to Definition \ref{Cond_beta_global}, set $y_0 = x_0$, $k:=0$.
			\While{not terminated}
			\State Compute cubic step:
			\begin{equation}
				\dk = \mathop{\arg\min}_{d \in \bb{R}^{n}}~ d\tp \nabla h(\xk) + \frac{1}{2} d\tp \nabla^2 h(\xk)d + \frac{\nu}{6} \norm{d}^3.
			\end{equation}
			\State $\ykp = \xk + \dk$, $\tau_k = \min\{ \rho, \norm{\nabla h(\ykp)}, \norm{\ykp-\xk}\}$.
			\State Compute momentum step: $	v_{k+1} = \ykp + \tau_k (\ykp - \yk)$. 
			\State Set $\xkp = \ykp$ if $h(\ykp) \leq h(v_{k+1})$; otherwise, set $\xkp = v_{k+1}$.
			\State $k = k+1$.
			\EndWhile
			\State Return $\xk$.
		\end{algorithmic}  
		\caption{Cubic regularization method with momentum for \ref{Prob_Pen}.}  
		\label{Alg:CRM}
	\end{algorithm}

	Next, we establish the convergence results of Algorithm \ref{Alg:CRM} by combining Theorem 1 in \cite{wang2020cubic}, Theorem \ref{The_firstorder_equivalence} and Theorem \ref{The_secondorder_equivalence}.
	\begin{theo}
		
		Suppose Assumption \ref{Assumption_4} and Assumption \ref{Assumption_3} hold,  $\nabla^2 g(x)$ is locally Lipschitz continuous in $\bb{R}^{n}$, Algorithm \ref{Alg:CRM} sets $\zeta > 0$, chooses its parameters as
		\begin{align*}
			&\tilde{M} :=  \sup_{y,z \in \Gamma_{x_0, \zeta}, y\neq z } \frac{\norm{\nabla^2 g(y) - \nabla^2 g(z)}}{\norm{y - z}},\quad \beta \geq \max\left\{\bar{\beta}_{x_0}, ~\frac{96\tilde{M}(\tMa +1)^2}{\tLsc^2 } \right\},\\
			&\nu =  \max\left\{ \sup_{y,z \in \Gamma_{x_0, \zeta}, y\neq z } \frac{\norm{\nabla^2 h(y) - \nabla^2 h(z)}}{\norm{y - z}},  \sup_{y \in \Gamma_{x_0, \zeta} } \frac{2\norm{\nabla^2 h(y)}}{\zeta}, \sup_{y \in \Gamma_{x_0, \zeta} }  \frac{\norm{\nabla h(y)}}{\zeta^2} \right\},
		\end{align*}
		and  produces iterates $\{\xk\}$.   Then for any $\varepsilon\in (0,1)$, there exists an constant $C$ that is dependent on $\beta$, $\Lf $, $\Lg$ and $\nu$, such that for any $K \geq \frac{C}{\varepsilon^{3/2}}$,  
		there exists an $\tilde{x} \in \{ x_i \}_{0\leq i\leq K}$ such that
		\begin{equation*}
			\norm{\grad f(\A^{\infty}(\tilde{x})) } \leq \varepsilon, \quad \text{ and } \quad 
			\lambda_{\min}(\hess f(\A^{\infty}(\tilde{x})))\geq -\sqrt{\varepsilon}.
		\end{equation*}
		
	\end{theo}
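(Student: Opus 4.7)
The plan is to combine the unconstrained convergence guarantee for CRm from Wang et al.\ 2020 on the function $h$ with the translation tools (Propositions \ref{Prop_upperbound_projection_gradient} and \ref{Prop_esti_lambda_min}) that convert stationarity of $h$ into Riemannian stationarity of $f$ at $\A^{\infty}(\cdot)$. First I would verify that Algorithm \ref{Alg:CRM} falls into the setting of Corollary \ref{Coro_barrier}: the outer update rule $x_{k+1}\in\{y_{k+1},v_{k+1}\}$ chosen to minimize $h$ between the two candidates, together with the standard descent estimate for the cubic subproblem (valid because the chosen $\nu$ dominates the Lipschitz constant of $\nabla^2 h$ on $\Gamma_{x_0,\zeta}$), gives $h(x_{k+1})\leq h(x_k)\leq h(x_0)$. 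The prescribed lower bounds on $\nu$ (the two terms involving $\sup \norm{\nabla^2 h(y)}/\zeta$ and $\sup \norm{\nabla h(y)}/\zeta^2$ over $\Gamma_{x_0,\zeta}$) are exactly what one needs, via the first-order optimality condition of the cubic subproblem, to force $\norm{d_k}\leq \zeta$, hence $\norm{x_{k+1}-x_k}\leq\zeta$ as well after the momentum combination (the momentum scaling $\tau_k\leq\rho<1$ together with $\tau_k\leq\norm{y_{k+1}-x_k}$ keeps the momentum correction within the same scale). With $\beta\geq\bar\beta_{x_0}$, Corollary \ref{Coro_barrier} then gives $\{x_k\}\subset\bar\Omega\cap\Gamma_{x_0,\zeta}$.

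Having confined the iterates to the compact set $\bar\Omega\cap\Gamma_{x_0,\zeta}$, the second Lipschitz constants of $\nabla h$ and $\nabla^2 h$ are finite there, and the choice of $\nu$ in the statement is precisely the uniform upper bound of the Hessian Lipschitz constant that CRm requires. Thus I can invoke Theorem 1 of \cite{wang2020cubic} applied to the unconstrained problem of minimizing $h$: there exists a constant $C_0$, depending only on $\nu$, $\tilde M$, $\beta$ and $h(x_0)-\inf h$, such that for every $K\geq C_0/\tilde\varepsilon^{3/2}$ some iterate $\tilde x\in\{x_i\}_{0\leq i\leq K}$ satisfies
\begin{equation*}
\norm{\nabla h(\tilde x)}\leq \tilde\varepsilon,\qquad \lambda_{\min}(\nabla^2 h(\tilde x))\geq -\sqrt{\tilde\varepsilon}.
\end{equation*}
Note that $\inf h$ is finite on the sublevel set, since $f$ is bounded below on the compact $\Gamma^\star_{x_0}$ and $h(x)\geq f(\A^{\infty}(x))$ on $\bar\Omega\cap\Gamma_{x_0,\zeta}$ by Proposition \ref{Prop_postprocessing}.

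To pass from stationarity of $h$ to Riemannian stationarity of $f$ at $\A^{\infty}(\tilde x)$, I would apply Proposition \ref{Prop_upperbound_projection_gradient}, which yields $\norm{\grad f(\A^{\infty}(\tilde x))}\leq 2\norm{\nabla h(\tilde x)}\leq 2\tilde\varepsilon$, and Proposition \ref{Prop_esti_lambda_min}, which yields
\begin{equation*}
\lambda_{\min}(\hess f(\A^{\infty}(\tilde x)))\geq \lambda_{\min}(\nabla^2 h(\tilde x)) - \omega_1 \norm{\nabla^2 g(\A^{\infty}(\tilde x))-\nabla^2 g(\tilde x)} - \omega_2 \norm{\nabla h(\tilde x)},
\end{equation*}
where $\omega_1=1$ and $\omega_2$ collects the constant $\tfrac{5}{2}\tLa+96\tLc\tMc(\tMa+1)^2/\tLsc^2$ (all finite by the definitions preceding the theorem). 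Lemma \ref{Le_dist_Ainfty} combined with the Lipschitz continuity of $\nabla^2 g$ on $\Gamma_{x_0,\zeta}$ (with constant $\tilde M$ by hypothesis) and Lemma \ref{Le_Ax_c} bounds $\norm{\nabla^2 g(\A^{\infty}(\tilde x))-\nabla^2 g(\tilde x)}$ by a constant multiple of $\norm{c(\tilde x)}$, which is in turn controlled by $\norm{\nabla h(\tilde x)}/\beta$ via Theorem \ref{The_firstorder_equivalence}. So the whole right-hand side is at least $-\sqrt{\tilde\varepsilon}-C_1\tilde\varepsilon$ for a constant $C_1$ depending only on the parameters in the theorem's statement.

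The final step is to choose $\tilde\varepsilon=\kappa\varepsilon$ with $\kappa$ small enough that $2\tilde\varepsilon\leq\varepsilon$ and $\sqrt{\tilde\varepsilon}+C_1\tilde\varepsilon\leq\sqrt\varepsilon$ (using $\varepsilon\in(0,1)$), and then set $C:=C_0/\kappa^{3/2}$, so that $K\geq C/\varepsilon^{3/2}$ implies $K\geq C_0/\tilde\varepsilon^{3/2}$. The main obstacle in the argument is the bounded-iterates verification in the first paragraph: one must carefully verify that the two bounds imposed on $\nu$ in the statement really do enforce both $h(x_{k+1})\leq h(x_k)$ and $\norm{x_{k+1}-x_k}\leq\zeta$ through every iteration, because only then does Corollary \ref{Coro_barrier} keep us inside the set on which all our local Lipschitz constants $\tilde M_{\A},\tilde L_c,\tilde L_{\A},\tilde\sigma_c$ are valid uniform bounds. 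Every other step reduces to plugging the sup-constants $\tLsc,\tMa,\tMc,\tLc,\tLa$ into the earlier propositions.
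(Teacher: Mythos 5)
Your proposal follows essentially the same route as the paper's proof: establish $h(x_{k+1})\leq h(x_k)$ and $\norm{x_{k+1}-x_k}\leq\zeta$ from Step 7 and the choice of $\nu$, invoke Corollary \ref{Coro_barrier} to confine the iterates to $\bar{\Omega}\cap\Gamma_{x_0,\zeta}$, apply Theorem 1 of \cite{wang2020cubic} to $h$ at a rescaled tolerance, and translate back via Proposition \ref{Prop_upperbound_projection_gradient}, Proposition \ref{Prop_esti_lambda_min}, Theorem \ref{The_firstorder_equivalence} and Lemma \ref{Le_dist_Ainfty}. The only cosmetic difference is that you carry a generic rescaling $\tilde\varepsilon=\kappa\varepsilon$ absorbed into $C$, whereas the paper hardwires the specific tolerances $\varepsilon/(2+8\tLa+288\tLc\tMc(\tMa+1)^2/\tLsc^2)$ and $\sqrt{\varepsilon}/3$; the arguments are equivalent.
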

	\begin{proof}
	We first conclude from the definition for $\nu$ that $\norm{x_{k+1} - x_k} \leq \zeta$ holds for any $k \geq 0$. 
	Due to step 7 in Algorithm \ref{Alg:CRM}, the sequence $\{\xk\}$ generated by Algorithm \ref{Alg:CRM} satisfies $h(x_{j+1}) \leq h(x_j), j = 0,1,...$, which leads to the fact that $\{\xk\} \subset \Gamma_{x_0, \zeta}$. Then  Corollary \ref{Coro_barrier} ensures that the sequence is restricted in $\bar{\Omega} \cap \Gamma_{x_0, \zeta}$, which implies the validity of Assumption 1 in \cite{wang2020cubic}. Together with Theorem 1 in \cite{wang2020cubic}, we can conclude that for any $\varepsilon\in (0,1)$ and any $K \geq \frac{C}{\varepsilon^{3/2}}$, there exists an $\tilde{x} \in \{ x_i \}_{0\leq i\leq K}$ satisfying
	\begin{equation*}
		\norm{ \nabla h(\tilde{x}) } \leq    \frac{\varepsilon}{2+  8\tLa + \frac{288 \tLc\tMc(\tMa + 1)^2}{ \tLsc^2}}, \quad \text{and} \quad \lambda_{\min}( \nabla^2 h(\tilde{x}) ) \geq -\frac{\sqrt{\varepsilon}}{3}.
	\end{equation*}
	
	By Proposition \ref{Prop_upperbound_projection_gradient},  the relationship $ \norm{\grad f(\A^{\infty}(\tilde{x})) } \leq 2\norm{\nabla h(\tilde{x})}  \leq \varepsilon$ holds.
	In addition, it follows from Theorem \ref{The_firstorder_equivalence} and Lemma \ref{Le_dist_Ainfty} that
	\begin{equation*}
		\norm{\A^{\infty}(\tilde{x}) - \tilde{x}} \leq \frac{4(\tMa+1)}{\tLsc}\norm{c(\tilde{x})} \leq \frac{32(\tMa+1)^2}{\beta \tLsc^2} \norm{\nabla h(\tilde{x})} \leq  \frac{1}{3\tilde{M}}\varepsilon. 
	\end{equation*}
	
	Finally, we recall Proposition \ref{Prop_esti_lambda_min} and arrive at
	\begin{equation*}
		\begin{aligned}
			&\lambda_{\min}(\hess f(\A^{\infty}(\tilde{x})))  \\
			\geq{}& \lambda_{\min}( \nabla^2 h(\tilde{x})) - \norm{\nabla^2 g(\A^{\infty}(\tilde{x})) - \nabla^2 g(\tilde{x})}- \left(\frac{5}{2}\tLa + \frac{96 \tLc\tMc(\tMa + 1)^2}{ \tLsc^2} \right) \norm{\nabla h(\tilde{x})}\\
			\geq{}& \lambda_{\min}( \nabla^2 h(\tilde{x}))  - \tilde{M}\norm{\A^{\infty}(\tilde{x}) - \tilde{x}} - \left(\frac{5}{2}\tLa + \frac{96 \tLc\tMc(\tMa + 1)^2}{ \tLsc^2} \right) \norm{\nabla h(\tilde{x})}\\
			\geq{}& -\frac{\sqrt{\varepsilon}}{3} - \frac{\varepsilon}{3}- \frac{\varepsilon}{3} = -\sqrt{\varepsilon}, 
		\end{aligned}
	\end{equation*}
	which completes the proof.
	\end{proof}

	\section{Conclusion}
	Riemannian optimization has close connections with unconstrained optimization. To extend existing unconstrained optimization approaches to solve Riemannian optimization problems and establish the corresponding theoretical properties, most existing approaches are developed based on the frameworks summarized in \cite{Absil2009optimization} by utilizing various geometrical materials from differential geometry. However, determining and preparing the geometrical materials are challenging for various Riemannian manifolds. In addition,
	incorporating
	these geometrical materials requires significant modifications to the original unconstrained optimization approaches. Last but not least, the approximation errors introduced by retractions and vector transports generally will lead to difficulties in establishing the theoretical 
	convergence properties. Therefore,
	it is challenging to apply advanced unconstrained optimization approaches to solve Riemannian optimization problems. 
	
	The main contribution of this paper is to propose a class of constraint dissolving approaches, based on the so-called constraint dissolving functions \ref{Prob_Pen}.
	We prove that under mild assumptions, \ref{Prob_Ori} and \ref{Prob_Pen} have the same first-order stationary points, second-order stationary points, local minimizers, and {\L}ojasiewicz exponents in a neighborhood of the feasible region. In addition, the exact gradient and Hessian of \ref{Prob_Pen} can directly be calculated based on the same order of differentials of $f$.
	We summarize our proposed constraint dissolving approaches in Algorithm \ref{Alg:Constraint_Dissloving}.
	We provide a framework to establish the global convergence, worst-case complexity, and
	escaping-from-saddle-point properties of Algorithm \ref{Alg:Constraint_Dissloving} under mild assumptions
	directly based on existing results for the selected unconstrained optimization approach.
	
	Moreover, we discuss how to choose the constraint dissolving operator $\A$ for \ref{Prob_Pen} and present an easy-to-compute form of $\A$ for several well-known manifolds, including the generalized Stiefel manifold, the symplectic Stiefel manifold, and the hyperbolic manifold. The construction of \ref{Prob_Pen} is independent of the geometrical properties of $\M$, which avoids the difficulties in analyzing the geometrical properties of the underlying manifold and hence it enables us to design constraint dissolving approaches for a number of well-known Riemannian manifolds. Finally, we use the cubic regularization method with momentum as an example to illustrate how to directly apply unconstrained optimization approaches to \ref{Prob_Ori} and inherit existing theoretical results.

	\appendix

		\section{Proofs for Main Results}
		\label{Section_proofs}

		\subsection{Proofs for Section \ref{Subsection_Theo_properties_A}}
		\label{Section_proofs_311}
		\paragraph{Proof for Lemma \ref{Le_dist_Ainfty}}
		\begin{proof}
		We first show that the inclusion $\A^j(\y)\in\Omegax{x}$ holds
		for any $j\in\mathbb{N}\cup \{0\}$ by mathematical induction. 
		It is clear that this statement holds at $j=0$. Suppose that
		the statement holds for any  $0\leq j \leq K-1$ with certain
		$K\in\mathbb{N}$. Then  for any $1\leq k\leq K-1$, Lemma \ref{Le_bound_cx}, Lemma \ref{Le_A_secondorder_descrease} and the definition of $\varepsilon_x$ imply that
		\begin{equation}
			\label{Eq_Le_dist_Ainfty_0}
			\norm{c(\A^k(\y))} \leq \frac{4\Lac }{\Lsc ^2} \norm{c(\A^{k-1}(\y))}^2 \leq \frac{1}{2} \norm{c(\A^{k-1}(\y))}.
		\end{equation}
		By simple calculations, we obtain
		\begin{equation}\label{add:1}
			\sum_{i=0}^{K} \norm{c(\A^{i}(\y))} < 2\norm{c(\y)}.
		\end{equation}
		On the other hand, it follows from Lemma \ref{Le_Ax_c} that
		\begin{equation*}
			\norm{\A^{k}(\y) - \A^{k-1}(\y)} \leq \frac{2(\Ma +1)}{\Lsc } \norm{c(\A^{k-1}(\y))},
		\end{equation*}
		which implies that
		\begin{equation}
			\label{Eq_Le_dist_Ainfty_1}
			\begin{aligned}
				&\sum_{i=1}^K \norm{\A^{i}(\y) - \A^{i-1}(\y)} \leq \sum_{i=1}^{K}\frac{2(\Ma +1)}{\Lsc } \norm{c(\A^{i-1}(\y))} \\
				\leq{}& \frac{4(\Ma +1)}{\Lsc } \norm{c(\y)} \leq \frac{4\Mc(\Ma +1)}{\Lsc } \norm{y-x}. 
			\end{aligned}
		\end{equation}
		Together with the fact that $\y \in \BOmegax{x}$, it holds that 
		\begin{equation*}
			\begin{aligned}
				&\norm{\A^K(y) - x} \leq \norm{y-x} + \sum_{i=1}^K \norm{\A^{i}(\y) - \A^{i-1}(\y)} \\
				\leq{}& \frac{4\Mc(\Ma +1) + \Lsc}{\Lsc } \norm{y-x} \leq \varepsilon_x. 
			\end{aligned}
		\end{equation*}
		Namely, the inclusion $\A^j(\y)\in\Omegax{x}$ holds at $j=K$. By mathematical induction, we can conclude that this statement holds for any $j=\mathbb{N}\cup \{0\}$.
		
		Now, it is easy to extend inequalities \eqref{add:1} and 
		\eqref{Eq_Le_dist_Ainfty_1} to the following infinite case, leading to
		\begin{equation*}
			\sum_{i=0}^{+\infty} \norm{c(\A^{i}(\y))} \leq \sum_{i=0}^{+\infty} \frac{1}{2^i} \norm{c(\y)} = 2\norm{c(\y)},
		\end{equation*}
		and
		\begin{equation}
			\sum_{i=1}^{+\infty} \norm{\A^{i}(\y) - \A^{i-1}(\y)} \leq \frac{4(\Ma +1)}{\Lsc } \norm{c(\y)}.  
		\end{equation}
		Then by the dominated convergence theorem, we have that  the sequence $\{ \A^k(x) \}$ is convergent, i.e. $\A^{\infty}(\y)$ exists.  Moreover,
		\begin{equation}
			\norm{\A^{\infty}(\y) - \y} \leq \sum_{k=1}^{\infty} \norm{ \A^{k}(\y) - \A^{k-1}(\y) } \leq  \frac{4(\Ma +1)}{\Lsc } \norm{c(\y)}.
		\end{equation}
		
		Finally, we show that $\A^{\infty}(y) \in \M$. From \eqref{Eq_Le_dist_Ainfty_0}, $\norm{c(\A^k(\y))}  \leq \frac{1}{2} \norm{c(\A^{k-1}(\y))}$ holds for any $k\geq 1$. Then 
		\begin{equation*}
			\norm{c(\A^{\infty}(y))} = \lim\limits_{k \to +\infty} \norm{c(\A^k(\y))} = 0.
		\end{equation*}
		This implies that $\A^{\infty}(y) \in \M$ and the proof is completed.
		\end{proof}

		\subsection{Proofs for Section \ref{Subsection_Theo_properties_CDF}}
		\subsubsection{Proofs for Section \ref{Subsubsection_Relationship_stationary}}
		\label{Section_proofs_321}
		\paragraph{Proof for Proposition \ref{Prop_Firstorder_Equivalence_feasible}}
		\begin{proof}
		For any first-order stationary point, $x \in \M$ of \ref{Prob_Ori}, it follows from the equality
		\eqref{add:2} that
		\begin{equation*}
			\nabla h(x) = \Ja(x) \nabla f(x) + \beta \Jc(x) c(x) = \Ja(x)\Jc(x) \lambda(x) = 0,
		\end{equation*}
		which implies that $x$ is a first-order stationary point of \ref{Prob_Pen}. 
		
		On the other hand, for any $x \in\M$ that is a first-order stationary point of \ref{Prob_Pen}, it holds that
		\begin{equation*}
			\Ja(x) \nabla f(x) = 0,
		\end{equation*}
		which results in the inclusion $\nabla f(x) \in \Nx =  \mathrm{range}(\Jc(x))$ from Lemma \ref{Le_Ja_nullspace}. By Definition \ref{Defin_FOSP}, we conclude that $x$ is a first-order stationary point of \ref{Prob_Ori}. 
		\end{proof}

		\paragraph{Proof for Theorem \ref{The_firstorder_equivalence}}
		\begin{proof}
		For any $\y \in \Omegax{x}$, it follows from the definition of $\Ma$ that
		\begin{equation*}
			\norm{(\Ja(\y) - I_n)\nabla h(\y)} \leq (\Ma + 1)\norm{\nabla h(\y)}. 
		\end{equation*}
		Moreover, we can obtain
		\begin{equation*}
			\begin{aligned}
				&\norm{(\Ja(\y) - I_n)  \nabla g(\y)} =  \norm{(\Ja(\y) - I_n) \Ja(\y)\nabla f(\A(\y)) }
				\leq{} \norm{\Ja(\y)^2 - \Ja(\y)} \norm{\nabla f(\A(\y))} \\
				\leq{}& \La (2\Ma  + 1) \mathrm{dist}(y, \M) \norm{\nabla f(\A(\y))} 
				\leq{} \frac{2\Lf \La (2\Ma  + 1)}{\Lsc } \norm{c(x)}.
			\end{aligned}
		\end{equation*}
		Here, the second inequality results from the Lipschitz continuity of $\Ja(\y)$ and  Lemma \ref{Le_Ja_idempotent}. The last inequality is implied by Lemma \ref{Le_bound_cx}. On the other hand, it holds that 
		\begin{equation*}
			\begin{aligned}
				&\norm{(\Ja(\y) - I_n)\Jc(\y) c(\y)} \geq \norm{\Jc(\y)c(\y)} - \norm{\Ja(\y)\Jc(\y)c(\y)} \\
				\geq{}& \frac{\Lsc }{2}\norm{c(\y)} - \norm{\Ja(\y)\Jc(\y)} \norm{c(\y)} \geq \frac{\Lsc }{2}\norm{c(\y)} - \Lac \mathrm{dist}(y, \M) \norm{c(\y)} \\
				\geq{}& \frac{\Lsc }{2}\norm{c(\y)} - \Lac \varepsilon_x \norm{c(\y)} \geq  \frac{\Lsc }{4}\norm{c(\y)}.
			\end{aligned}
		\end{equation*}
		
		Combining the above two inequalities, we have 
		\begin{equation}\label{add:3}
			\begin{aligned}
				&\norm{\nabla h(\y)} \geq \frac{1}{\Ma + 1}\norm{( \Ja(\y) - I_n )\nabla h(\y)} \\
				\geq{}& \frac{1}{\Ma + 1}  \Big(\beta \norm{(\Ja(\y) - I_n)\Jc(\y) c(\y)}  -  \norm{(\Ja(\y) - I_n) \nabla g(\y)}\Big) \\
				\geq{}& \frac{1}{\Ma + 1}\left( \frac{\beta \Lsc }{4} - \frac{2\Lf \La (2\Ma  + 1)}{\Lsc } \right)\norm{c(\y)}
				\geq{} \frac{\beta \Lsc}{8(\Ma + 1)} \norm{c(\y)}
			\end{aligned}
		\end{equation}
		holds for any $\y \in \BOmegax{x}$.
		Here, the last inequality results from Definition \ref{Cond_beta}. 
		
		Finally, for any first-order stationary point $x^*$ of \ref{Prob_Pen} that satisfies $x^*\in  \Omegax{x}$, it follows from the inequality \eqref{add:3} that
		the feasibility $\norm{c(x^*)} = 0$ is implied
		by the stationarity $\nabla h(x^*) = 0$. Together with Proposition  \ref{Prop_Firstorder_Equivalence_feasible}, we conclude that $x^*$ is a first-order stationary point of \ref{Prob_Ori}.
		\end{proof}

		\medskip

		In the rest of this part, we aim to prove Theorem \ref{The_secondorder_equivalence}. We start with two auxiliary lemmas. 
		\begin{lem}
			\label{Le_esti_hess_A}
			Suppose Assumption \ref{Assumption_4} holds. For any given $x \in \M$, the following equation holds for any $d \in \Tx$,
			\begin{equation*}
				\left(\DJa(x)[\grad f(x)]\right) d = \left(\DJa(x)[\nabla f(x)]\right) d+ \sum_{i=1}^p \lambda_i(x)  \Ja(x)  \nabla^2 c_i(x) \Ja(x)\tp d.  
			\end{equation*}
			Moreover, if $x$ is a first-order stationary point of \ref{Prob_Ori}, then for any $d \in \Tx$, it holds that
			\begin{equation*}
				\left(\DJa(x)[\nabla f(x)]\right) d = - \sum_{i=1}^p \lambda_i(x) \Ja(x) \nabla^2 c_i(x) \Ja(x)\tp d.
			\end{equation*}
		\end{lem}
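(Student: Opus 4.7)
The plan is to derive both identities from a single key relation that pins down how $\DJa$ interacts with the normal directions $\nabla c_i(x)$ when evaluated on a tangent vector. Concretely, the target relation is
\begin{equation*}
(\DJa(x)[\nabla c_i(x)])\, d = -\,\Ja(x)\nabla^2 c_i(x)\, \Ja(x)\tp d \quad \text{for every } d \in \Tx \text{ and } i \in \{1,\dots, p\}.
\end{equation*}
Once this is in hand, Part~1 follows by linearity of $\DJa(x)[\cdot]$ applied to the decomposition $\grad f(x) = \nabla f(x) - \sum_{i=1}^p \lambda_i(x)\nabla c_i(x)$ supplied by Proposition~\ref{Prop_FOSP_Rie}, and Part~2 follows from Part~1 by observing that at a first-order stationary point one has $\grad f(x) = 0$, so $\DJa(x)[\grad f(x)] = 0$ and the identity in Part~1 collapses to exactly the claim in Part~2.

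To establish the key relation I would exploit Assumption~\ref{Assumption_2}, which gives $\Ja(y)\Jc(y) = 0$ for every $y \in \M$. Reading off the $i$-th column, the smooth map $F(y) := \Ja(y)\nabla c_i(y)$ vanishes identically on $\M$. Since LICQ (Assumption~\ref{Assumption_1}) makes $\M$ a smooth embedded submanifold whose tangent space at $x$ is exactly $\Tx$, any $d \in \Tx$ is realized as $\gamma'(0)$ for some smooth curve $\gamma: (-\varepsilon,\varepsilon) \to \M$ with $\gamma(0) = x$. Differentiating $F(\gamma(t)) \equiv 0$ at $t = 0$ and using the product/chain rule then yields
\begin{equation*}
(\DJa(x)[d])\nabla c_i(x) + \Ja(x)\nabla^2 c_i(x)\, d = 0.
\end{equation*}
Under Assumption~\ref{Assumption_4}, the symmetry of the mixed second partials of $\A$ (Clairaut's theorem) identifies $(\DJa(x)[d])\nabla c_i(x)$ with $(\DJa(x)[\nabla c_i(x)])d$. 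Finally, rewriting $d = \Ja(x)\tp d$ in the residual term via Lemma~\ref{Le_Ja_identical} delivers the key relation.

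The main obstacle I anticipate is purely notational: the symbol $\DJa(x)[\cdot]$ is introduced as a directional derivative of $\Ja$, but the lemma treats it as a linear operator on the vector inside the brackets. The two readings correspond to different slices of the same third-order tensor $\nabla^2 \A(x)$, and under Assumption~\ref{Assumption_4} Clairaut's theorem glues them together so that all the contractions produce the same vector. Beyond that bookkeeping, the remaining steps are just linearity of $\DJa(x)[\cdot]$ and substitution of $\grad f(x) = 0$ at a first-order stationary point, so the argument should go through cleanly.
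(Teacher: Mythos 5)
Your proof is correct, and its overall architecture coincides with the paper's: both reduce the lemma to the single key identity $\left(\DJa(x)[\nabla c_i(x)]\right)d = -\Ja(x)\nabla^2 c_i(x)\Ja(x)\tp d$ for $d\in\Tx$, then conclude by linearity of $v\mapsto \DJa(x)[v]$ applied to $\grad f(x) = \nabla f(x) - \sum_i\lambda_i(x)\nabla c_i(x)$ and, for the second claim, by $\grad f(x)=0$ at a stationary point. Where you differ is in how the key identity is obtained. You differentiate the identity $\Ja(y)\nabla c_i(y)\equiv 0$ (Assumption \ref{Assumption_2}) along a smooth curve in $\M$ with velocity $d$, which is clean and purely geometric but leans on the submanifold structure of $\M$ (existence of such curves via LICQ and the implicit function theorem). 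The paper instead stays in the ambient space: it shows quantitatively that $\norm{\Ja(y)\Jc(\A(y))} \leq \Lac\,\mathrm{dist}(y,\M) = \ca{O}(t^2)$ along the straight ray $y = x+td$ for $d\in\Tx$ (using Lemma \ref{Le_bound_cx} and a Taylor expansion of $c$), deduces $\nabla^2(c_i\circ\A)(x)d=0$, and then reads off the key identity from the chain-rule formula $\nabla^2(c_i\circ\A)(x) = \Ja(x)\nabla^2 c_i(\A(x))\Ja(x)\tp + \DJa(x)[\nabla c_i(\A(x))]$; this reuses the Lipschitz machinery already set up and produces the factor $\Ja(x)\tp$ automatically, whereas you insert it afterwards via Lemma \ref{Le_Ja_identical} -- both are legitimate. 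One small imprecision: the identification $\left(\DJa(x)[d]\right)\nabla c_i(x) = \left(\DJa(x)[\nabla c_i(x)]\right)d$ is not really Clairaut's theorem; it is the tautological reindexing of the contraction $\sum_{k,l}\partial_m\partial_l\A_k(x)\,d_l\,(\nabla c_i(x))_k$, i.e.\ the passage between reading $\DJa(x)[\cdot]$ as a directional derivative of $\Ja$ and reading it as the contraction $\sum_k \nabla^2\A_k(x)\,v_k$ that the paper uses in Proposition \ref{Prop_gradient_h}. You correctly flag this as notational bookkeeping, and it does not affect the validity of the argument.
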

		\begin{proof}
		Firstly, for any given $x\in\M$, it follows from Assumption \ref{Assumption_2} that the equality $\Ja(x)\Jc(\A(x)) = \Ja(x)\Jc(x) = 0$ holds. Then for any $d \in \Tx$,  we denote
		$\hat{x} \in \mathrm{proj}(x+d, \M)$ and obtain
		\begin{equation*}
			\begin{aligned}
				&\norm{\Ja(x+d) \Jc(\A(x+d)) }=  \norm{\Ja(x+d) \Jc(\A(x+d)) - \Ja(\hat{x})\Jc(\A(\hat{x})) } \\
				\leq {} & \Lac \norm{x+d-\hat{x}} =  \Lac \mathrm{dist}(x+d, \M)
				\overset{(i)}{\leq} \frac{2\Lac}{\Lsc} \norm{c(x+d)} 
				\overset{(ii)}{\leq}{} \frac{\Lac \Lc}{\Lsc} \norm{d}^2.
			\end{aligned}
		\end{equation*}
		Here, the inequality $(i)$ directly follows from Lemma 3.1. Meanwhile, combining the second-order Taylor expansion of  $c(x + d)$ at $x$ with the facts
		that $x\in\M$ and $d \in \Tx$, we can obtain the inequality $(ii)$.
		
		Therefore, it follows from the fact $\nabla (c\circ\A) (x)=\Ja(x)\Jc(\A(x))$ that 
		\begin{equation*}
			\lim\limits_{t \to 0}~ \frac{ \Ja(x+ td) \nabla c_i(\A(x+td)) - \Ja(x) \nabla c_i(\A(x)) }{t} = \lim\limits_{t \to 0}  ~\frac{ \Ja(x+ td) \nabla c_i(\A(x+td))  }{t}= 0
		\end{equation*}
		holds for any $i=1,...,p$ and $d \in \Tx$, which
		further implies $\nabla^2 (c_i\circ\A)(x) d = 0$. 
		Here $\nabla^2 (c_i\circ\A) (x)$ denotes the Hessian of $c_i(\A(x))$ with respect to $x$. 
		
		On the other hand, we notice that
		\begin{equation*}
			\nabla^2 (c_i\circ\A)(x) = \Ja(x) \nabla^2 c_i(\A(x)) \Ja(x)\tp + \DJa(x)[\nabla c_i(\A(x))].
		\end{equation*} 
		Combining Definition \ref{Defin_FOSP} with the equality \eqref{add:4} and $\A(x) = x$, we have
		\begin{equation*}
			\begin{aligned}
				&0 = \sum_{i=1}^p \lambda_i(x) \nabla^2 (c_i\circ\A)(x) d \\
				={}& \left(\DJa(x)[\sum_{i=1}^p \lambda_i(x) \nabla c_i(x)]\right) d + \sum_{i=1}^p \lambda_i(x)  \Ja(x)  \nabla^2 c_i(x)\Ja(x)\tp d\\
				={}& \left(\DJa(x)[\nabla f(x) - \grad f(x)]\right) d + \sum_{i=1}^p \lambda_i(x)  \Ja(x)  \nabla^2 c_i(x) \Ja(x)\tp d.\\
			\end{aligned}
		\end{equation*}
		From here, the first required result follows readily. The second required equality holds because $\grad f(x) = 0$ when $x$ is a stationary point of \ref{Prob_Ori}. 
		\end{proof}

		\medskip
		\begin{lem}
			\label{Le_Hessian_tangent}
			Suppose Assumption \ref{Assumption_4} holds, then for any $x \in \M$ and  $d \in \Tx$, it holds that
			\begin{equation*}
				\nabla^2 h(x)d - \Ja(x) \left( \nabla^2 f(x) - \sum_{i=1}^p \lambda_i(x) \nabla^2 c_i(x) \right) \Ja(x)\tp d =   \left(\DJa(x)[\grad f(x)]\right) d.
			\end{equation*}
			Moreover, if $x$ is a first-order stationary point of \ref{Prob_Ori}, then for any  $d \in \Tx$, it holds that
			\begin{equation*}
				\nabla^2 h(x)d = \Ja(x) \left(\nabla^2 f(x) - \sum_{i=1}^p \lambda_i(x) \nabla^2 c_i(x) \right)\Ja(x)\tp  d.
			\end{equation*}
		\end{lem}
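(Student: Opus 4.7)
The plan is to derive the identity by directly expanding $\nabla^2 h(x)$ at a feasible point using the explicit Hessian formulas from Proposition \ref{Prop_gradient_h}, then invoking Lemma \ref{Le_esti_hess_A} to rewrite the second-order derivative $\DJa(x)[\nabla f(x)]$ in terms of $\DJa(x)[\grad f(x)]$ and the curvature terms $\sum_i \lambda_i(x)\nabla^2 c_i(x)$.

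First, I would specialize Proposition \ref{Prop_gradient_h} to a feasible point $x \in \M$. Since $c(x) = 0$ and $\A(x) = x$ by Assumption \ref{Assumption_2}, the Hessian formula collapses to
\begin{equation*}
\nabla^2 h(x) = \Ja(x)\nabla^2 f(x)\Ja(x)\tp + \DJa(x)[\nabla f(x)] + \beta\,\Jc(x)\Jc(x)\tp,
\end{equation*}
because the term $\DJc(x)[c(x)]$ vanishes. Next, applying this to an arbitrary $d \in \Tx$ and using the characterization $\Tx = \mathrm{Null}(\Jc(x)\tp)$, the penalty term $\beta\Jc(x)\Jc(x)\tp d$ drops out, leaving
\begin{equation*}
\nabla^2 h(x) d = \Ja(x)\nabla^2 f(x)\Ja(x)\tp d + \DJa(x)[\nabla f(x)] d.
\end{equation*}

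Then I would subtract $\Ja(x)\bigl(\nabla^2 f(x) - \sum_{i=1}^p \lambda_i(x)\nabla^2 c_i(x)\bigr)\Ja(x)\tp d$ from both sides. The $\nabla^2 f(x)$ terms cancel, leaving
\begin{equation*}
\DJa(x)[\nabla f(x)] d + \sum_{i=1}^p \lambda_i(x)\Ja(x)\nabla^2 c_i(x)\Ja(x)\tp d,
\end{equation*}
which is exactly the right-hand side produced by the first identity of Lemma \ref{Le_esti_hess_A} applied at the tangent vector $d$. This yields $\DJa(x)[\grad f(x)]d$, establishing the first claim.

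For the second claim, if $x$ is a first-order stationary point of \ref{Prob_Ori}, then $\grad f(x) = \nabla f(x) - \Jc(x)\lambda(x) = 0$ by Proposition \ref{Prop_FOSP_Rie}, so the right-hand side $\DJa(x)[\grad f(x)]d$ vanishes, yielding the desired simplified expression. There is no real obstacle here: the statement is essentially bookkeeping that chains Proposition \ref{Prop_gradient_h} with Lemma \ref{Le_esti_hess_A}. The only subtle point worth flagging is that the identity in Lemma \ref{Le_esti_hess_A} is only guaranteed to hold on tangent directions, which is why the hypothesis $d \in \Tx$ is essential for both conclusions.
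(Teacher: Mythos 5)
Your proposal is correct and follows essentially the same route as the paper's proof: expand $\nabla^2 h(x)$ at a feasible point via Proposition \ref{Prop_gradient_h}, kill the $\beta\Jc(x)\Jc(x)\tp d$ and $\DJc(x)[c(x)]$ terms using $d \in \Tx$ and $c(x)=0$, and then apply the first identity of Lemma \ref{Le_esti_hess_A} to trade $\DJa(x)[\nabla f(x)]d$ for $\DJa(x)[\grad f(x)]d$ minus the constraint-curvature correction. The second claim follows in both treatments from $\grad f(x)=0$ at a stationary point together with linearity of $\DJa(x)[\cdot]$ in its argument.
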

		\begin{proof}
		Firstly, it follows from Proposition \ref{Prop_gradient_h} that $$\nabla^2 h(x) = \Ja(x) \nabla^2 f(\A(x)) \Ja(x)\tp + \DJa(x)[\nabla f(x)] + \beta \Jc(x) \Jc(x)\tp.$$ Together with Lemma \ref{Le_esti_hess_A} and the fact that $\Jc(x)\tp d = 0$, we conclude that 
		\begin{equation*}
			\begin{aligned}
				\nabla^2 h(x)d ={}&  \Ja(x) \nabla^2 f(x) \Ja(x)\tp d + \DJa(x)[\nabla f(x)] d \\
				={}&  \Ja(x) \left( \nabla^2 f(x) - \sum_{i=1}^p \lambda_i(x) \nabla^2 c_i(x) \right) \Ja(x)\tp d + \left(\DJa(x)[\grad f(x)]\right) d,
			\end{aligned}
		\end{equation*}
		and complete the proof. 
		\end{proof}

		Now, we are ready to prove the main theorem.
		\paragraph{Proof for Theorem \ref{The_secondorder_equivalence}}
		\begin{proof}
		
		Firstly, if $x$ is a second-order stationary point of \ref{Prob_Pen}, then Theorem \ref{The_firstorder_equivalence} implies that $x \in \M$. Therefore, we conclude that $\grad f(x) = 0$ and $\lambda_{\min}(\nabla^2 h(x)) \geq 0$. It follows from Lemma \ref{Le_Ja_identical} that  $\Ja(x) \tp d_1 = d_1$ holds for any $d_1 \in \Tx$. Then together with Lemma \ref{Le_Hessian_tangent}, we obtain that
		\begin{equation*}
			d_1\tp \nabla^2 h(x) d_1  = d_1\tp \left( \nabla^2 f(x) - \sum_{i=1}^p \lambda_i(x) \nabla^2 c_i(x) \right) d_1.
		\end{equation*}
		Together with  Lemma \ref{Le_Hessian_tangent} and Proposition \ref{Prop_FOSP_Rie}, we arrive at
		\begin{equation*}
			\lambda_{\min}(\hess f(x)) \geq \min_{d \in \Tx, \norm{d} = 1} ~ d\tp \nabla^2 h(x) d  \geq 0,
		\end{equation*}
		which implies that $x$ is a second-order stationary point of \ref{Prob_Ori}.

		On the other hand, suppose $x$ is a second-order stationary point of \ref{Prob_Ori}, which implies that $\grad f(x) = 0$ and $\hess f(x) \succeq 0$. As a result, we have $\ca{H}(x) \succeq 0$, where $\ca{H}(x)$ is defined in \eqref{Eq_defin_HX}. Then it follows from Lemma \ref{Le_Hessian_tangent} that for any $d_1 \in \Tx$, we have
		\begin{equation*}
			\begin{aligned}
				&d_1\tp \nabla^2 h(x) d_1 = d_1\tp \Ja(x) \left( \nabla^2 f(x) - \sum_{i=1}^p \lambda_i(x) \nabla^2 c_i(x) \right)\Ja(x)\tp d_1\\
				={}& d_1\tp \left( \nabla^2 f(x) - \sum_{i=1}^p \lambda_i(x) \nabla^2 c_i(x) \right) d_1
			\end{aligned}
		\end{equation*}
		Moreover, by \eqref{Eq_Prop_gradient_h_0} in Proposition \ref{Prop_gradient_h}, Lemma \ref{Le_Ja_identical} and Lemma \ref{Le_esti_hess_A}, we have that for $d_1 \in \Tx$, 
		\begin{equation}
			\begin{aligned}
				&d_1\tp \nabla^2 g(x) d_1 = d_1\tp \Ja(x) \left( \nabla^2 f(x) - \sum_{i=1}^p \lambda_i(x) \nabla^2 c_i(x) \right) \Ja(x)\tp d_1\\
				={}& d_1\tp  \left( \nabla^2 f(x) - \sum_{i=1}^p \lambda_i(x) \nabla^2 c_i(x) \right)  d_1.
			\end{aligned}
		\end{equation}
		Thus, 
		\begin{equation}
			\begin{aligned}
				&\lambda_{\max}( \ca{H}(x) ) = \sup_{d_1 \in \Tx, \norm{d_1} = 1}~ d_1\tp  \left( \nabla^2 f(x) - \sum_{i=1}^p \lambda_i(x) \nabla^2 c_i(x) \right)  d_1\\
				={}& \sup_{d_1 \in \Tx, \norm{d_1} = 1}~ d_1\tp \nabla^2 g(x) d_1\leq \Lg. 
			\end{aligned}
		\end{equation}
		As a result, for any  $\beta \geq \beta_{x}$, we obtain that for any $d_1 \in \Nx$, 
		\begin{equation*}
			\begin{aligned}
				&d_2\tp \nabla^2 h(x) d_2 = \beta d_2\tp \Jc(x) \Jc(x)\tp d_2 + d_2\tp \nabla^2 g(x) d_2\\
				\geq{}& \beta \Lsc^2 \norm{d_2}^2 - \Lg \norm{d_2}^2 
				\geq \Lg \Ma^2 \norm{d_2}^2 \\
				\geq{}& \lambda_{\max}(\ca{H}(x)) \norm{\Ja(x)}^2 \norm{d_2}^2 \\
				\geq{}& d_2\tp \Ja(x) \left( \nabla^2 f(x) - \sum_{i=1}^p \lambda_i(x) \nabla^2 c_i(x) \right) \Ja(x)\tp d_2.
			\end{aligned} 
		\end{equation*}
		Note that the last inequality holds because $\Ja(x)\tp d_2 \in \Tx$. 
		
		Moreover, Lemma \ref{Le_Ja_identical} implies that $\Ja(x)\tp(d_1+d_2) \in \Tx$ holds for any $d_1 \in \Tx$ and $d_2 \in \Nx$. Together with Definition \ref{Defin_SOSP} and the fact that $\hess f(x) \succeq 0$, we arrive at
		\begin{equation*}
			(d_1+d_2)\tp \Ja(x) \left( \nabla^2 f(x) - \sum_{i=1}^p \lambda_i(x) \nabla^2 c_i(x) \right) \Ja(x)\tp(d_1+d_2) \geq 0.
		\end{equation*}
		Additionally, it follows from Lemma \ref{Le_Hessian_tangent} that 
		\begin{align*}
			d_2\tp \nabla^2 h(x) d_1 =& d_2\tp \Ja(x) \left( \nabla^2 f(x) - \sum_{i=1}^p \lambda_i(x) \nabla^2 c_i(x) \right) \Ja(x)\tp d_1.
		\end{align*}
		Finally, we can conclude that
		\begin{equation*}
			\begin{aligned}
				0 \leq{}&(d_1+d_2)\tp \Ja(x) \left( \nabla^2 f(x) - \sum_{i=1}^p \lambda_i(x) \nabla^2 c_i(x) \right) \Ja(x)\tp(d_1+d_2) \\
				={}& d_1\tp \nabla^2 h(x) d_1 + 2 d_1\tp \nabla^2 h(x) d_2 +  d_2\tp \Ja(x) \left( \nabla^2 f(x) - \sum_{i=1}^p \lambda_i(x) \nabla^2 c_i(x) \right) \Ja(x)\tp d_2\\
				\leq{}& d_1\tp \nabla^2 h(x) d_1 + 2 d_1\tp \nabla^2 h(x) d_2 + d_2\tp \nabla^2 h(x) d_2\\
				={}& (d_1 + d_2)\tp \nabla^2 h(x) (d_1 + d_2).
			\end{aligned}
		\end{equation*}
		From the arbitrariness of $d_1 \in \Tx$ and $d_2 \in \Nx$, we get $\nabla^2 h(x) \succeq 0$. This complete the proof. 
		\end{proof}

		\subsubsection{Proofs for Section \ref{Subsubsection_Estimation_stationarity}}
		\label{Section_proofs_322}
		\paragraph{Proof for Proposition \ref{Prop_postprocessing}}
		\begin{proof}
		
		It follows from Lemma \ref{Le_Ax_c} and  Lemma \ref{Le_A_secondorder_descrease}  that
		\begin{equation*}
			\begin{aligned}
				&|f(\A^{2}(\y)) - f(\A(\y))| 
				\leq \Lf  \norm{\A^{2}(\y) - \A(\y)} \leq \frac{2\Lf (\Ma +1)}{\Lsc }\norm{c(\A(\y))}\\
				\leq{}&  \frac{8\Lf (\Ma +1)\Lac }{\Lsc ^3} \norm{c(\y)}^2.
			\end{aligned}
		\end{equation*}
		By Lemma \ref{Le_A_secondorder_descrease}, we know that the inequality $\norm{c(\A(y))} \leq \frac{1}{2} \norm{c(y)}$ holds for any $y \in \Omegax{x}$. As a result, 
		\begin{equation}
			\label{Eq_Prop_postprocessing_1}
			\begin{aligned}
				&h(\A(\y)) - h(\y) \leq \left|f(\A^{2}(\y)) - f(\A(\y)) \right| + \frac{\beta}{2} \left( \norm{c(\A(\y))}^2 - \norm{c(\y)}^2 \right)\\
				\leq{}& - \left(\frac{\beta}{4} -\frac{8\Lf (\Ma +1)\Lac }{\Lsc ^3}  \right)\norm{c(\y)}^2 \leq -\frac{\beta}{8} \norm{c(\y)}^2.
			\end{aligned}
		\end{equation}
		Here the last inequality uses the fact that $\beta \geq \beta_{x}$. 
		
		Together with Lemma \ref{Le_dist_Ainfty}, \eqref{Eq_Prop_postprocessing_1} further implies that 
		\begin{equation*}
			h(\A^{\infty}(\y)) - h(\y) = \sum_{i=0}^{+\infty} h(\A^{i+1}(\y)) - h(\A^{i}(\y))\leq -\frac{\beta}{8} \sum_{i=0}^{+\infty} \norm{c(\A^i(\y))}^2
			\leq -\frac{\beta}{4} \norm{c(\y)}^2,
		\end{equation*}
		and we complete the proof. 
		\end{proof}

		\paragraph{Proof for Proposition \ref{Prop_upperbound_projection_gradient}}
		\begin{proof}
		Firstly, it follows from Lemma \ref{Le_dist_Ainfty} that $\A^{\infty}(\y)$ exists and $\A^{\infty}(\y) \in \Omegax{x} \cap \M$. 
		Let $U$ be a matrix whose columns form an orthonormal basis of $\ca{T}_{\A^{\infty}(\y)}$, from the definition of $\ca{T}_{\A^{\infty}(\y)} = \mathrm{Null}( \Jc(\A^{\infty}(\y))\tp )$, it holds that $U\tp \Jc(\A^{\infty}(\y)) = 0$. Then we can conclude that 
		\begin{equation}
			\label{Eq_Prop_upperbound_projection_gradient_1}
			\begin{aligned}
				&\norm{U\tp \Jc(\y)c(\y)} \leq \norm{U\tp \Jc(\y)} \norm{c(\y)} \leq \norm{U\tp  \Big(\Jc(\y) -  \Jc(\A^{\infty}(\y)) \Big)   } \norm{c(\y)} \\
				\leq{}& \norm{\Jc(\y) - \Jc(\A^{\infty}(\y))}\norm{c(\y)} \leq \frac{4\Lc(\Ma + 1)}{\Lsc} \norm{c(\y)}^2. 
			\end{aligned}
		\end{equation}
		Here the last inequality results from Lemma \ref{Le_dist_Ainfty}. 
		
		Because $U$ be a matrix whose columns form an orthonormal basis of the tangent space at $\A^{\infty}(\y)$, Lemma \ref{Le_Ja_identical} and Proposition \ref{Prop_FOSP_Rie} imply that $U U\tp \nabla g(\A^{\infty}(\y)) = \grad f(\A^{\infty}(y))$.
		Therefore, we can obtain
		\begin{equation*}
			\begin{aligned}
				&\norm{\nabla h(\y)} \geq \norm{U\tp \nabla h(\y)} = \norm{U\tp \left(\nabla g(\y) + \beta \Jc(\y) c(\y) \right)} \\
				\geq{}& \norm{U\tp \nabla g(\y)} - \beta \norm{U\tp \Jc(\y)c(\y)} \\
				\geq{}& \norm{U\tp \nabla g(\A^{\infty}(\y))}  - \Lg \norm{\A^{\infty}(\y) - \y} - \beta \norm{U\tp \Jc(\y)c(\y)} \\
				\overset{(i)}{\geq} {}& \norm{\grad f(\A^{\infty}(\y))} - \frac{4\Lg(\Ma + 1)}{\Lsc} \norm{c(\y)} - \frac{4\beta \Lc(\Ma + 1)}{\Lsc}\norm{c(\y)}^2.\\
			\end{aligned}
		\end{equation*}
		Here the inequality $(i)$ results from \eqref{Eq_Prop_upperbound_projection_gradient_1} and Lemma \ref{Le_dist_Ainfty}. 
		As a result, we further have
		\begin{equation*}
			\begin{aligned}
				&\norm{\nabla h(\y)} \geq \frac{1}{2}  \norm{\nabla h(\y)} + \frac{1}{2}  \norm{\nabla h(\y)}\\
				\overset{(ii)}{\geq}{}& \frac{1}{2}  \norm{\nabla h(\y)} + \frac{\beta \Lsc}{16(\Ma +1)} \norm{c(\y)}\\
				\geq & \frac{1}{2}\norm{\grad f(\A^{\infty}(\y))} 
				+ \frac{\beta \Lsc}{16(\Ma +1)} \norm{c(\y)}- \frac{2\Lg(\Ma + 1)}{\Lsc} \norm{c(\y)} \\&- \frac{2\beta\Lc (\Ma + 1)}{\Lsc}\norm{c(\y)}^2\\
				\overset{(iii)}{\geq}{}{}& \frac{1}{2}\norm{\grad f(\A^{\infty}(\y))} + \frac{\beta \Lsc}{16(\Ma +1)} \norm{c(\y)}- \frac{2\Lg(\Ma + 1)}{\Lsc} \norm{c(\y)} \\
				&- \frac{2\beta\Lc  (\Ma + 1)}{\Lsc} \cdot \frac{ \Mc \Lsc \varepsilon_x}{4\Mc(\Ma +1) + \Lsc} \norm{c(\y)}\\
				\geq{}& \frac{1}{2} \norm{\grad f(\A^{\infty}(\y))}.
			\end{aligned}
		\end{equation*}
		Here the inequality $(ii)$ is implied by Theorem \ref{The_firstorder_equivalence}, and the inequality $(iii)$ follows from the definition of $\BOmegax{x}$ and Lemma \ref{Le_bound_cx}. 
		\end{proof}

		\paragraph{Proof for Proposition \ref{Prop_sepctral_interlace}} 
		\begin{proof}
		It follows from Lemma \ref{Le_Ja_identical} that $\Ja(x) \tp d = d$ holds for any $d \in \Tx$, which implies the equality
		\begin{equation*}
			d\tp \nabla^2 h(x) d = d\tp \nabla^2 g(x) d = d\tp \left( \nabla^2 f(x) - \sum_{i=1}^p \lambda_i(x) \nabla^2 c_i(x) \right) d + d\tp \left(\DJa(x)[\grad f(x)]\right) d.
		\end{equation*}
		Together with Proposition \ref{Prop_FOSP_Rie}, we obtain the follow inequalities
		\begin{align*}
			\lambda_{\min}(\hess f(x)) \geq \min_{d \in \Tx, \norm{d} = 1} ~ d\tp \nabla^2 h(x) d - \La \norm{\grad f(x)} \geq \lambda_{\min}(\nabla^2 g(x)) - \La \norm{\grad f(x)}, \\
			\lambda_{\max}(\hess f(x)) \leq \max_{d \in \Tx, \norm{d} = 1} ~ d\tp \nabla^2 h(x) d + \La \norm{\grad f(x)} \leq \lambda_{\max}(\nabla^2 g(x)) + \La \norm{\grad f(x)},
		\end{align*} 
		which complete the proof. 
		\end{proof}

		\paragraph{Proof for Proposition \ref{Prop_esti_lambda_min}}
		\begin{proof}
		Firstly, 
		it follows from  Lemma \ref{Le_dist_Ainfty} and Theorem \ref{The_firstorder_equivalence} that
		\begin{equation}
			\label{Eq_Prop_esti_lambda_min_01}
			\norm{y - \A^{\infty}(y)} \leq \frac{4 (\Ma + 1)}{\Lsc} \norm{c(y)} \leq \frac{32 (\Ma + 1)^2}{\beta \Lsc^2} \norm{\nabla h(y)}.
		\end{equation}
		
		Let $U_y$ be a matrix whose columns form an orthonormal basis of $\ca{T}_{\A^{\infty}(y)}$. Lemma \ref{Le_Hessian_tangent} implies that
		\begin{equation*}
			\begin{aligned}
				& \lambda_{\min}(\hess f(\A^{\infty}(y))) \geq \lambda_{\min}( U_y\tp \nabla^2 g(\A^{\infty}(y)) U_y )  - \La \norm{\grad f(\A^{\infty}(y))}\\
				\geq{}&  \lambda_{\min}( U_y\tp \nabla^2 g(\A^{\infty}(y)) U_y )  - 2\La \norm{\nabla h(\A^{\infty}(y))}\\
				\geq{}& \lambda_{\min}( U_y\tp \nabla^2 g(\A^{\infty}(y)) U_y )  - 2\La \norm{\nabla h(y)} - 2\La\Lg \norm{y - \A^{\infty}(y)}\\
				\geq{}& \lambda_{\min}( U_y\tp \nabla^2 g(\A^{\infty}(y)) U_y ) - 2\La\left( 1 + \frac{32\Lg (\Ma + 1)^2}{\beta \Lsc^2}\right)\norm{\nabla h(y)}\\
				\geq{}& \lambda_{\min}( U_y\tp \nabla^2 g(\A^{\infty}(y)) U_y )  - \frac{5}{2} \La \norm{\nabla h(y)}.
			\end{aligned}
		\end{equation*}
		Here the last inequality follows from the fact that $\beta \geq \beta_x$. 
		Then, we can obtain
		\begin{equation*}
			\begin{aligned}
				& \lambda_{\min}( U_y\tp \nabla^2 g(\A^{\infty}(y)) U_y ) \geq \lambda_{\min}( U_y\tp \nabla^2 g(y) U_y ) - \norm{U_y\tp \left(\nabla^2 g(\A^{\infty}(y)) - \nabla^2 g(y)\right) U_y}\\
				\geq{}& \lambda_{\min}( U_y\tp \nabla^2 g(y) U_y ) - \norm{\nabla^2 g(\A^{\infty}(y)) - \nabla^2 g(y)}.
			\end{aligned}
		\end{equation*}
		Together with the equality that $U_y\tp \Jc(\A^{\infty}(y)) = 0$ and \eqref{Eq_Prop_esti_lambda_min_01}, we arrive at
		\begin{equation*}
			\begin{aligned}
				&\norm{U_y\tp \left( \Jc(y)\Jc(y)\tp + \DJc(y)[c(y)] \right) U_y} \leq 3 \Lc\Mc \norm{y - \A^{\infty}(y)} \\
				\leq{}&  \frac{96 \Lc\Mc(\Ma + 1)^2}{\beta \Lsc^2} \norm{\nabla h(y)},
			\end{aligned}
		\end{equation*}
		which further implies the inequality
		\begin{equation*}
			\begin{aligned}
				&\lambda_{\min}( \nabla^2 h(y)) \leq \lambda_{\min}( U_y\tp \nabla^2 h(y) U_y ) \\
				\leq{}& \lambda_{\min}( U_y\tp \nabla^2 g(y) U_y ) + \frac{96 \Lc\Mc(\Ma + 1)^2}{ \Lsc^2} \norm{\nabla h(y)}.
			\end{aligned}
		\end{equation*}
		Finally, we conclude that
		\begin{equation*}
			\begin{aligned}
				& \lambda_{\min}(\hess f(\A^{\infty}(y))) \geq \lambda_{\min}( U_y\tp \nabla^2 g(\A^{\infty}(y)) U_y )  - \La \norm{\grad f(\A^{\infty}(y))} \\
				\geq{}& \lambda_{\min}( U_y\tp \nabla^2 g(y) U_y ) - \norm{\nabla^2 g(\A^{\infty}(y)) - \nabla^2 g(y)} - \La \norm{\grad f(\A^{\infty}(y))}\\
				\geq{}& \lambda_{\min}( \nabla^2 h(y)) - \norm{\nabla^2 g(\A^{\infty}(y)) - \nabla^2 g(y)} - \left(\frac{5}{2}\La + \frac{96 \Lc\Mc(\Ma + 1)^2}{ \Lsc^2} \right) \norm{\nabla h(y)},
			\end{aligned}
		\end{equation*}
		and complete the proof. 
		\end{proof}

		\subsubsection{Proofs for Section \ref{Subsubsection_Relationships_oters}}
		\label{Section_proofs_323}
		\paragraph{Proof for Proposition \ref{Prop_KL}}
		\begin{proof}
		Since $f(x)$ satisfies the Riemannian {\L}ojasiewicz gradient inequality at  $x \in \M$ with exponent $\theta$, 
		there exists a neighborhood $\ca{U} \subset \BOmegax{x}$ and a constant $C > 0$ such that for any $y \in  \M \cap \ca{U}$, $\A^{\infty}(y) \in \ca{U}$ and $\norm{\grad f(y)} \geq C \left| f(y) - f(x)  \right|^{1-\theta}$. 
		
		For any $z \in \ca{U} \subset \tilde{\Omega}_{r}$, we denote $w:=\A^{\infty}(z) \in \M \cap \ca{U}$, 
		it then follows from Theorem \ref{The_firstorder_equivalence} and Proposition \ref{Prop_upperbound_projection_gradient} that
		\begin{equation}
			\label{Eq_Prop_KL_1}
			\begin{aligned}
				\norm{\nabla h(z)} \geq{}& \frac{1}{2} \norm{\grad f(w)} + \frac{\beta \Lsc}{16(\Ma  + 1)}  \norm{c(z)} \\
				\geq{}& \frac{C}{2}\left| f(w) - f(x)  \right|^{1-\theta} + \frac{\beta \Lsc}{16(\Ma  + 1)} \norm{c(z)},
			\end{aligned}
		\end{equation}
		which further implies
		\begin{equation*}
			\begin{aligned}
				&\left| h(z) - h(w) \right|^{1-\theta} = \left|f(\A(z)) - f(\A^{\infty}(z)) + \frac{\beta}{2} \norm{c(z)}^2 \right|^{1-\theta}\\
				\leq{}& \left| f(\A(z)) - f(\A^{\infty}(z)) \right|^{1-\theta} + \beta^{1-\theta} \norm{c(z)}^{2-2\theta}\\
				\overset{(i)}{\leq}{}& \left( \Lf \norm{\A(z) - \A^{\infty}(z)} \right)^{1-\theta} + \beta^{1-\theta} \norm{c(z)}^{2-2\theta}\\
				\overset{(ii)}{\leq}{}& \left(\left(\frac{16\Lf (\Ma +1)\Lac }{\Lsc ^3} \right)^{1-\theta} + \beta^{1-\theta}\right)  \norm{c(z)}^{2-2\theta}
				\leq{} \frac{5\beta^{1-\theta}}{4}\norm{c(z)}^{2-2\theta}.
			\end{aligned}
		\end{equation*}
		Here, the inequality $(i)$ results from the Lipschitz continuity of $f$. Meanwhile, the inequaltiy $(ii)$ is concluded from Lemma \ref{Le_A_secondorder_descrease} and Lemma \ref{Le_dist_Ainfty}. 
		As a result, it follows from the monotonicity of $t^{1-\theta}$ that
		\begin{equation}
			\label{Eq_Prop_KL_2}
			\left| h(z) - h(x) \right|^{1-\theta} \leq \left| h(w) - h(x)  \right|^{1-\theta} + \left| h(z) - h(w)  \right|^{1-\theta} \leq  \left| f(w) - f(x)  \right|^{1-\theta}+ \frac{5\beta^{1-\theta}}{4}\norm{c(z)}^{2-2\theta}.
		\end{equation}
		Substituting \eqref{Eq_Prop_KL_2} into \eqref{Eq_Prop_KL_1}, we obtain
		\begin{equation*}
			\begin{aligned}
				&\norm{\nabla h(z)} \geq \frac{C}{2}\left| f(w) - f(x)  \right|^{1-\theta} + \frac{\beta \Lsc}{16(\Ma  + 1)}\norm{c(z)} \\
				\geq{}& \frac{C}{2}\left| h(z) - h(x) \right|^{1-\theta}  - \frac{5C \beta^{1-\theta}}{8}\norm{c(z)}^{2-2\theta} + \frac{\beta \Lsc}{16(\Ma  + 1)} \norm{c(z)}.
			\end{aligned}
		\end{equation*}
		Denote $\HOmegax{x} := \left\{ \tilde{x}\in \BOmegax{x}: \norm{c(\tilde{x})}^{1-2\theta} \leq \left(\frac{ \beta^\theta \Lsc }{10(\Ma + 1) C }\right) \right\}$, then $\HOmegax{x}$ is a neighborhood of $x$. Resulting from the fact that $\theta \in (0, \frac{1}{2}]$, the following inequality holds for any $z \in  \HOmegax{x} $,
		\begin{equation*}
			\frac{\beta \Lsc}{16(\Ma  + 1)} \norm{c(z)} \geq \frac{5C \beta^{1-\theta}}{8}\norm{c(z)}^{2-2\theta},
		\end{equation*}
		which further implies the inequality
		\begin{equation*}
			\norm{\nabla h(z)} \geq\frac{C}{2} \left| h(z) - h(x) \right|^{1-\theta}.
		\end{equation*}
		Therefore, we complete the proof.
		\end{proof}

		\paragraph{Proof for Theorem \ref{The_local_minimizer}}
		\begin{proof}
		Suppose $\tilde{x}$ is a local minimizer of \ref{Prob_Ori}, then there exists a neighborhood $\ca{U}_1$ of $\tilde{x}$ such that $f(y) \geq f(\tilde{x})$ holds for any $y \in \M \cap \ca{U}_1$. We denote $\tilde{\ca{U}}_1:= \left\{ x \in \BOmegax{\tilde{x}}: \A^{\infty}(x) \in \ca{U}_1 \right\}$ 
		which is a neighborhood of $\tilde{x}$ from
		Lemma \ref{Le_dist_Ainfty}. Then 
		it follows from Proposition \ref{Prop_postprocessing} that the inequality
		\begin{equation*}
			h(y) \geq h(\A^{\infty}(y)) + \frac{\beta}{16}\norm{c(y)}^2 \geq h(\tilde{x}),
		\end{equation*}
		holds for any $y \in \BOmegax{\tilde{x}} \cap \tilde{\ca{U}}_1$, which further implies
		that $\tilde{x}$ is a local minimizer of \ref{Prob_Pen}. 
		
		On the other hand, suppose there exists $x \in \M$ such that $\tilde{x} \in \BOmegax{x} $ is a local minimizer of \ref{Prob_Pen}, i.e. there exists a neighborhood $\ca{U}_2 \subset \BOmegax{x}$ of $\tilde{x}$ such that $h(y) \geq h(\tilde{x})$ holds for any $y \in \ca{U}_2$. Firstly, we have $\tilde{x} \in \M$ from Theorem \ref{The_firstorder_equivalence}. Recalling the fact that $h(y) = f(y)$ holds for any $y \in \M$, we immediately arrive at $f(y) \geq f(\tilde{x}), ~ \forall y \in \ca{U}_2 \cap \M$.  
		Namely, $\tilde{x}$ is a local minimizer of \ref{Prob_Ori}. 
		\end{proof}

		\section{Supplementary Examples}
		\label{Section_Supplementary_Examples}
		Developing Riemannian solvers by the frameworks introduced in \cite{Absil2009optimization} requires several basic geometrical materials of the manifold. 
		Currently, the generalized Stiefel manifold, the hyperbolic manifold, and the symplectic Stiefel manifold are not supported by the existing Riemannian optimization packages programmed in Python \cite{townsend2016pymanopt,meghwanshi2018mctorch,geoopt2020kochurov}. On the other hand, even when the geometrical materials of the above-mentioned manifolds are available, it is still unclear whether
		various existing efficient unconstrained solvers can be easily implemented and added into the
		existing Python-based Riemannian optimization packages. 
		
		In this section, we present several supplementary examples to  
		illustrate that \ref{Prob_Pen} can be directly embedded in various existing unconstrained solvers to solve \ref{Prob_Ori}. All the numerical experiments in this section are run in serial on a platform with Intel(R) Xeon(R) Gold 6242R CPU @ 3.10GHz under Ubuntu 20.04 running Python 3.7.0 and Numpy 1.20.0 \cite{numpy2020array}.

		We choose the solvers from SciPy package \cite{scipy2020SciPyNMeth}, which provides various highly efficient solvers for unconstrained optimization. The detailed descriptions of the selected solvers are presented in Table \ref{Table_solvers}. We select the symplectic manifold as an example since there is no existing symplectic manifold solver
		available in any Python package. However, we can easily solve the problem by the SciPy package with our \ref{Prob_Pen} approach.

		\begin{table}[htbp]
			\centering
			\scriptsize
			\caption{Detailed descriptions for selected solvers from the SciPy package.}
			\label{Table_solvers}
			\begin{tabular}{|lp{0.6\textwidth}p{0.18\textwidth}|}
				\hline
				Name & Descriptions & Riemannian version in Python? \\ \hline
				CG          &  The nonlinear conjugate gradient method by Polak and Ribiere, which is a variant of the Fletcher-Reeves method  \cite{nocedal2006numerical}. This solver is written in Python.     &   Y    \\ 
				BFGS        & The quasi-Newton method proposed by  Broyden, Fletcher, Goldfarb, and Shanno (BFGS) \cite{nocedal2006numerical} and programmed in Python. & N\\
				L-BFGS-B    & The limit-memory BFGS method \cite{byrd1995limited}. SciPy package provides a python wrapper for the original FORTRAN solver developed by \cite{zhu1997algorithm,morales2011remark}. &  N \\
				TNC			& The truncated Newton method  \cite{nocedal2006numerical}. SciPy package provides a python wrapper for its C implementation by \cite{nash1984newton}. & N\\
				Newton-CG   & The Newton-CG method \cite{nocedal2006numerical} that uses conjugate gradient method method to the compute the search direction. This solver is programmed in Python. & N \\
				Trust-krylov & The Newton GLTR trust-region method \cite{gould1999solving}. The trust-region subproblems are solved by trilib \cite{lenders2018trlib}, which is a C programmed Krylov solver. & N\\
				Trust-ncg & The Newton conjugate gradient trust-region method \cite{nocedal2006numerical}, programmed in Python. & Y \\
				Trust-exact & The trust-region method for unconstrained minimization, where the trust-region subproblems are solved exactly by factorizing the Hessian matrix \cite{conn2000trust}. Therefore, it requires the explicit expression of the Hessian matrix of the objective function. & N\\  
				\hline
			\end{tabular}
			
		\end{table}

		\subsection{Preliminary numerical experiments on unconstrained optimization approaches}
		\label{Section_Numerical_symp}

		\begin{table}
			\centering
			\tiny
			\caption{Numerical results of the  nearest  symplectic matrix problem \eqref{Example_NSM} with fixed $2s = 10$.  }
			\label{Table_nearest_symplectic_matrix_fix_m}
			\begin{tabular}{c|c|cccccc}
				\hline
				Test instance             & Solver & Fval & Iter & Obj\_eval& Grad & Feas & CPU time (s) \\ \hline
				\multirow{8}{*}{$2m = 100$} 
				& CG            & 1.59e+00 & 45 & 76 & 5.77e-06 & 1.19e-06 & 0.03 \\
				& BFGS          & 1.59e+00 & 71 & 90 & 6.24e-06 & 9.47e-07 & 1.55 \\
				& L-BFGS-B      & 1.59e+00 & 49 & 56 & 3.81e-06 & 6.68e-07 & 0.03 \\
				& TNC           & 1.59e+00 & 41 & 489 & 5.38e-06 & 5.34e-07 & 0.13 \\
				& Newton-CG     & 1.59e+00 & 16 & 24 & 1.10e-06 & 1.93e-07 & 0.09 \\
				& Trust-krylov  & 1.59e+00 & 15 & 16 & 7.57e-06 & 1.32e-06 & 0.12 \\
				& Trust-ncg     & 1.59e+00 & 19 & 20 & 5.50e-07 & 9.40e-08 & 0.12 \\
				& Trust-exact   & 1.59e+00 & 37 & 38 & 6.45e-07 & 2.05e-07 & 29.10 \\ \hline
				\multirow{8}{*}{$2m = 500$} 
				& CG            & 1.23e+00 & 35 & 64 & 9.41e-06 & 1.92e-06 & 0.03 \\
				& BFGS          & 1.23e+00 & 51 & 75 & 9.21e-06 & 1.73e-06 & 49.18 \\
				& L-BFGS-B      & 1.23e+00 & 39 & 45 & 6.48e-06 & 1.38e-06 & 0.04 \\
				& TNC           & 1.23e+00 & 45 & 620 & 7.36e-07 & 8.79e-08 & 0.29 \\
				& Newton-CG     & 1.23e+00 & 15 & 19 & 9.18e-07 & 1.63e-07 & 0.09 \\
				& Trust-krylov  & 1.23e+00 & 13 & 14 & 9.68e-06 & 1.71e-06 & 0.13 \\
				& Trust-ncg     & 1.23e+00 & 14 & 15 & 2.67e-07 & 4.58e-08 & 0.10 \\
				& Trust-exact   & 1.23e+00 & 33 & 34 & 9.28e-07 & 2.52e-07 & 482.73 \\ \hline
				\multirow{8}{*}{$2m = 1000$} 
				& CG            & 1.27e+00 & 27 & 52 & 9.08e-06 & 3.32e-06 & 0.05 \\
				& BFGS          & 1.27e+00 & 52 & 72 & 9.27e-06 & 2.33e-06 & 258.38 \\
				& L-BFGS-B      & 1.27e+00 & 35 & 40 & 9.80e-06 & 3.63e-06 & 0.06 \\
				& TNC           & 1.27e+00 & 35 & 507 & 6.12e-07 & 6.49e-08 & 0.37 \\
				& Newton-CG     & 1.27e+00 & 14 & 18 & 1.77e-06 & 3.11e-07 & 0.09 \\
				& Trust-krylov  & 1.27e+00 & 12 & 13 & 5.45e-06 & 9.57e-07 & 0.14 \\
				& Trust-ncg     & 1.27e+00 & 14 & 15 & 2.27e-07 & 3.93e-08 & 0.11 \\
				& Trust-exact   & - & - & - & - & - & $> 1200$\\ \hline
				\multirow{8}{*}{$2m = 2000$} 
				& CG            & 1.18e+00 & 33 & 61 & 4.51e-06 & 6.13e-07 & 0.12 \\
				& BFGS          & - & - & - & - & - & $> 1200$\\
				& L-BFGS-B      & 1.18e+00 & 39 & 47 & 2.28e-06 & 3.27e-07 & 0.17 \\
				& TNC           & 1.18e+00 & 33 & 553 & 3.70e-07 & 4.54e-08 & 0.72 \\
				& Newton-CG     & 1.18e+00 & 14 & 17 & 2.40e-07 & 4.31e-08 & 0.15 \\
				& Trust-krylov  & 1.18e+00 & 12 & 13 & 1.42e-06 & 2.64e-07 & 0.20 \\
				& Trust-ncg     & 1.18e+00 & 18 & 19 & 3.50e-08 & 6.05e-09 & 0.26 \\
				& Trust-exact   & - & - & - & - & - & $> 1200$\\ \hline
				\multirow{8}{*}{$2m = 10000$}
				& CG            & 1.10e+00 & 30 & 60 & 8.85e-06 & 1.55e-06 & 0.42 \\
				& BFGS          & - & - & - & - & - & $> 1200$\\
				& L-BFGS-B 		& 1.10e+00 & 33 & 38 & 5.52e-06 & 1.00e-06 & 0.66 \\
				& TNC           & 1.10e+00 & 20 & 156 & 2.84e-07 & 3.33e-08 & 1.38 \\
				& Newton-CG     & 1.10e+00 & 13 & 16 & 5.83e-06 & 6.78e-07 & 0.31 \\
				& Trust-krylov  & 1.10e+00 & 11 & 12 & 9.26e-06 & 1.85e-06 & 0.47 \\
				& Trust-ncg     & 1.10e+00 & 16 & 17 & 1.62e-07 & 2.79e-08 & 0.56 \\
				& Trust-exact   & - & - & - & - & - & $> 1200$\\ \hline
			\end{tabular}

		\end{table}
		
		\begin{table}
			\centering
			\tiny
			\caption{Numerical results of the  nearest  symplectic matrix problem \eqref{Example_NSM} with fixed $2m = 1000$.  }
			\label{Table_nearest_symplectic_matrix_fix_s}
			\begin{tabular}{c|c|cccccc}
				\hline
				Test instance             & Solver & Fval & Iter & Obj\_eval& Grad & Feas & CPU time (s) \\ \hline
				\multirow{8}{*}{$2s = 2$} 
				& CG            & 2.15e-01 & 16 & 33 & 6.32e-06 & 1.65e-06 & 0.02 \\
				& BFGS          & 2.15e-01 & 13 & 16 & 8.20e-06 & 8.28e-07 & 1.18 \\
				& L-BFGS-B      & 2.15e-01 & 14 & 16 & 1.34e-06 & 2.09e-07 & 0.02 \\
				& TNC           & 2.15e-01 & 35 & 333 & 2.90e-07 & 5.52e-08 & 0.12 \\
				& Newton-CG     & 2.15e-01 & 10 & 13 & 5.92e-07 & 1.49e-07 & 0.04 \\
				& Trust-krylov  & 2.15e-01 & 10 & 11 & 2.25e-06 & 5.67e-07 & 0.05 \\
				& Trust-ncg     & 2.15e-01 & 14 & 15 & 5.56e-07 & 1.40e-07 & 0.06 \\
				& Trust-exact   & 2.15e-01 & 27 & 28 & 4.42e-06 & 1.11e-06 & 124.31 \\ \hline
				\multirow{8}{*}{$2s = 10$} 
				& CG            & 1.17e+00 & 33 & 59 & 2.71e-06 & 3.13e-07 & 0.04 \\
				& BFGS          & 1.17e+00 & 48 & 72 & 9.34e-06 & 9.89e-07 & 234.10 \\
				& L-BFGS-B      & 1.17e+00 & 43 & 49 & 1.64e-06 & 2.27e-07 & 0.13 \\
				& TNC           & 1.17e+00 & 32 & 538 & 6.86e-07 & 8.59e-08 & 0.39 \\
				& Newton-CG     & 1.17e+00 & 15 & 18 & 1.50e-06 & 2.56e-07 & 0.08 \\
				& Trust-krylov  & 1.17e+00 & 12 & 13 & 3.03e-06 & 5.38e-07 & 0.14 \\
				& Trust-ncg     & 1.17e+00 & 14 & 15 & 2.85e-07 & 4.87e-08 & 0.11 \\
				& Trust-exact   & - & - & - & - & - & $> 1200$\\ \hline
				\multirow{8}{*}{$2s = 50$} 
				& CG            & 7.19e+00 & 47 & 80 & 1.34e-06 & 1.51e-07 & 0.41 \\
				& BFGS          & - & - & - & - & - & $>1200$ \\
				& L-BFGS-B      & 7.19e+00 & 53 & 57 & 6.83e-07 & 1.02e-07 & 0.62 \\
				& TNC           & 7.19e+00 & 41 & 748 & 2.64e-07 & 2.93e-08 & 4.41 \\
				& Newton-CG     & 7.19e+00 & 17 & 20 & 2.05e-07 & 3.59e-08 & 0.63 \\
				& Trust-krylov  & 7.19e+00 & 15 & 16 & 4.11e-06 & 7.06e-07 & 0.93 \\
				& Trust-ncg     & 7.19e+00 & 17 & 18 & 1.91e-07 & 3.26e-08 & 0.86 \\
				& Trust-exact   & - & - & - & - & - & $>1200$ \\\hline
				\multirow{8}{*}{$2s = 100$} 
				& CG            & 1.65e+01 & 58 & 100 & 6.64e-07 & 1.51e-07 & 0.84 \\
				& BFGS          & - & - & - & - & - & $>1200$ \\
				& L-BFGS-B      & 1.65e+01 & 67 & 73 & 2.39e-07 & 3.81e-08 & 1.21 \\
				& TNC           & 1.65e+01 & 43 & 710 & 4.09e-07 & 4.27e-08 & 8.10 \\
				& Newton-CG     & 1.65e+01 & 18 & 21 & 4.71e-06 & 8.52e-07 & 1.06 \\
				& Trust-krylov  & 1.65e+01 & 19 & 20 & 2.54e-06 & 4.33e-07 & 1.85 \\
				& Trust-ncg     & 1.65e+01 & 23 & 24 & 8.83e-08 & 1.53e-08 & 1.99 \\
				& Trust-exact   & - & - & - & - & - & $>1200$ \\ \hline
				\multirow{8}{*}{$2s = 500$} 
				& CG            & 1.21e+02 & 135 & 224 & 2.65e-07 & 5.28e-08 & 10.38 \\
				& BFGS          & - & - & - & - & - & $>1200$ \\
				& L-BFGS-B      & 1.21e+02 & 133 & 200 & 3.52e-07 & 5.78e-08 & 21.90 \\
				& TNC           & 1.21e+02 & 67 & 977 & 5.81e-07 & 9.86e-08 & 64.28 \\
				& Newton-CG     & 1.21e+02 & 31 & 36 & 1.26e-06 & 2.20e-07 & 12.96 \\
				& Trust-krylov  & 1.21e+02 & 28 & 29 & 4.40e-06 & 7.55e-07 & 21.64 \\
				& Trust-ncg     & 1.21e+02 & 47 & 37 & 2.09e-07 & 3.50e-08 & 54.90 \\
				& Trust-exact   & - & - & - & - & - & $>1200$ \\ \hline
			\end{tabular}
			
		\end{table}

			In this subsection, we test the performance of various unconstrained optimization approaches on solving \ref{Prob_Ori} through \ref{Prob_Pen}. 
			
			We first consider the following optimization problem over the symplectic Stiefel manifold (i.e., $\M = \left\{ X \in \bb{R}^{2m\times 2s}: X \tp Q_m X = Q_s \right\}$ with $n = 4ms$, as described  in Table \ref{Table_implementation}),
			\begin{equation}
				\label{Example_NSM}
				\begin{aligned}
					\min_{x \in \M} \quad \frac{1}{2} \norm{x - \tilde{w}}^2.
				\end{aligned}
			\end{equation}
			Problem \eqref{Example_NSM} is usually referred as the  nearest  symplectic matrix problem \cite{gao2021riemannian,wu2010critical}, which aims to calculate the nearest symplectic matrix on the symplectic Stiefel manifold  to a target point $\tilde{w} \in \bb{R}^n$ with respect to the $\ell_2$-norm. In our numerical examples, we follow the settings in \cite{gao2021riemannian} to randomly generate $\tilde{w}$ in $\bb{R}^n$ and scale it by $\tilde{w} = \tilde{w} / \mathrm{norm}(\tilde{w})$.  We set the penalty parameter $\beta = 2$ in \ref{Prob_Pen}, and initiate all the selected solvers in Table \ref{Table_solvers} at the the same initial point, which is randomly generated over the symplectic Stiefel manifold. 
			Moreover, we adopt the auto-differentiation packages to automatically generate the gradient and Hessian from the expression of $h(x)$.  Specifically, we generate the gradient $\nabla h(x)$ and explicit expression of $\nabla^2 h(x)$ by the autograd package \cite{maclaurin2015autograd}. Furthermore, for the solvers ``Trust-krylov'' and ``Trust-ncg'', the Hessian-vector product for $\nabla^2 h(x)$ is automatically generated by the JAX package \cite{jax2018github} from the expression of $\nabla h(x)$. We terminate the solvers when $\norm{\nabla h(\xk)} \leq 10^{-5}$, or the number of iterations exceeds $10000$, while keeping all the other parameters as the default values.

			Additionally, we also consider the following optimization problem over the generalized Stiefel manifold, (i.e., $\M = \left\{ X \in \bb{R}^{m\times s}: X \tp B X = I_s \right\}$ with $B \in \bb{R}^{m\times m}$ as a prefixed positive-definite symmetric matrix and $n = ms$),
			\begin{equation}
				\label{Example_GenEig}
				\min_{X \in \bb{R}^{m\times s}} \quad f(X) = -\frac{1}{2} \tr\left( X\tp AX \right) \qquad \text{s.t.} ~  X\tp BX = I_s. 
			\end{equation}
			Problem \eqref{Example_GenEig} is usually referred as the  generalized eigenvalue problem, which aims to compute the largest $s$ eigenvalues and their corresponding eigenvectors for the pair $(A, B)$. In our numerical examples, we follows the settings in \cite{scott1981solving} to randomly generate the sparse matrix $A$ and $B$ by the $\mathrm{scipy.sparse.random}$ function from SciPy, where the density parameter  of the generated matrix is fixed as $0.01$. Then we scale $A$ and $B$ by $A = A/ \norm{A}$ and $B = B/\norm{B}$.  Moreover, we set $B = 1.1 *  I_m + B$ to ensure that $B \succ 0$. 
			
			Similar to the settings for the nearest symplectic matrix problem \eqref{Example_NSM}, we choose the constraint dissolving mapping $\A$ as suggested in Table \ref{Table_implementation}, set the penalty parameter $\beta = 2$ in \ref{Prob_Pen}, and initiate all the selected solvers in Table \ref{Table_solvers} at the the same initial point, which is randomly generated over the generalized Stiefel manifold. We terminate the solvers once $\norm{\nabla h(\xk)} \leq 10^{-5}$, or the number of iterations exceeds $10000$, or the running time exceeds $1200$ seconds, while keeping all the other parameters as the default values.

			Table \ref{Table_nearest_symplectic_matrix_fix_m}-\ref{Table_generalized_eigenvalue_fix_s} illustrate the performance of all the solvers from Table \ref{Table_solvers} in solving problem \eqref{Example_NSM} and \eqref{Example_GenEig}, 
			under different combinations of problem parameters $m$ and $s$. The terms ``Fval'', ``Iter'', ``Obj\_eval''
			``Grad", ``Feas", and ``CPU time" 
			stand for the function value,  the number of iterations, the number of function value evaluations,  $\norm{\nabla h(x^*)}$, $\norm{c(x^*)}$, and the wall-clock running time, respectively. Here $x^*$ is the final solution returned by these solvers. We can learn from these tables
			that all the solvers can directly minimize \ref{Prob_Pen} and yield solutions with similar function values and high accuracy in feasibility. 
			This shows that solving \ref{Prob_Ori} via our \ref{Prob_Pen} formulation is not sensitive to the choice of the unconstrained optimization solver. Of course, the running times for various solvers may differ. But for our example, CG, L-BFGS-B, Newton-CG, Trust-krylov, Trust-ncg, and to a lesser extent TNC, can all solve the \ref{Prob_Pen} problem highly efficiently.

		\begin{table}
			\centering
			\tiny
			\caption{Numerical results of the generalized eigenvalue problem \eqref{Example_GenEig} with fixed $s = 20$.  }
			\label{Table_generalized_eigenvalue_fix_m}
			\begin{tabular}{c|c|cccccc}
				\hline
				Test instance             & Solver & Fval & Iter & Obj\_eval& Grad & Feas & CPU time (s) \\ \hline
				\multirow{8}{*}{$m = 100$} 
				& CG            & -7.11e+00 & 375 & 567 & 9.17e-06 & 1.37e-07 & 0.15 \\
				& BFGS          & -7.11e+00 & 250 & 272 & 8.02e-06 & 1.37e-07 & 30.23 \\
				& L-BFGS-B      & -7.11e+00 & 264 & 274 & 1.16e-06 & 8.69e-08 & 0.11 \\
				& TNC           & -7.11e+00 & 523 & 1119 & 9.89e-06 & 3.43e-07 & 0.68 \\
				& Newton-CG     & -7.11e+00 & 50 & 65 & 1.08e-06 & 5.68e-09 & 0.16 \\
				& Trust-krylov  & -7.11e+00 & 44 & 45 & 1.57e-06 & 3.41e-07 & 0.39 \\
				& Trust-ncg     & -7.11e+00 & 59 & 60 & 2.03e-06 & 2.57e-07 & 0.11 \\
				& Trust-exact   & -7.80e+00 & 66 & 68 & 1.34e-07 & 8.50e-08 & 130.74 \\ \hline
				\multirow{8}{*}{$m = 500$} 
				& CG            & -6.32e+00 & 621 & 1115 & 1.74e-06 & 2.18e-07 & 0.81 \\
				& BFGS   & - & - & - & - & - & $>1200$ \\
				& L-BFGS-B      & -6.32e+00 & 696 & 725 & 1.84e-06 & 2.61e-07 & 1.02 \\
				& TNC           & -6.32e+00 & 73 & 880 & 8.21e-06 & 1.15e-06 & 0.81 \\
				& Newton-CG     & -6.32e+00 & 107 & 170 & 3.32e-06 & 5.82e-07 & 1.96 \\
				& Trust-krylov  & -6.32e+00 & 94 & 95 & 3.27e-08 & 6.46e-09 & 1.70 \\
				& Trust-ncg     & -6.32e+00 & 128 & 129 & 1.45e-06 & 1.70e-07 & 0.91 \\
				& Trust-exact   & - & - & - & - & - & $>1200$ \\ \hline
				\multirow{8}{*}{$m = 1000$} 
				& CG            & -4.63e+00 & 361 & 661 & 1.62e-06 & 1.63e-07 & 1.85 \\
				& BFGS   & - & - & - & - & - & $>1200$ \\
				& L-BFGS-B      & -4.63e+00 & 387 & 413 & 2.66e-06 & 1.27e-07 & 2.58 \\
				& TNC           & -4.63e+00 & 64 & 767 & 8.05e-06 & 1.36e-06 & 1.83 \\
				& Newton-CG     & -4.63e+00 & 85 & 117 & 7.56e-06 & 1.11e-06 & 4.36 \\
				& Trust-krylov  & -4.63e+00 & 68 & 69 & 7.26e-07 & 1.57e-07 & 3.30 \\
				& Trust-ncg     & -4.63e+00 & 93 & 94 & 1.95e-06 & 3.74e-07 & 2.11 \\
				& Trust-exact   & - & - & - & - & - & $>1200$ \\ \hline
				\multirow{8}{*}{$m = 5000$} 
				& CG            & -2.08e+00 & 1549 & 2910 & 6.79e-06 & 4.86e-07 & 158.54 \\
				& BFGS   & - & - & - & - & - & $>1200$ \\
				& L-BFGS-B      & -2.08e+00 & 1787 & 1906 & 6.56e-06 & 6.99e-07 & 121.96 \\
				& TNC           & -2.08e+00 & 249 & 3269 & 7.43e-06 & 1.35e-06 & 184.79 \\
				& Newton-CG     & -2.08e+00 & 226 & 373 & 3.72e-05 & 5.89e-07 & 500.96 \\
				& Trust-krylov  & -2.08e+00 & 212 & 213 & 7.49e-07 & 1.68e-07 & 208.13 \\
				& Trust-ncg     & -2.08e+00 & 282 & 283 & 9.09e-06 & 1.26e-06 & 140.87 \\
				& Trust-exact   & - & - & - & - & - & $>1200$ \\ \hline
			\end{tabular}

		\end{table}

		\begin{table}
			\centering
			\tiny
			\caption{Numerical results of the generalized eigenvalue problem \eqref{Example_GenEig} with fixed $m = 1000$.  }
			\label{Table_generalized_eigenvalue_fix_s}
			\begin{tabular}{c|c|cccccc}
				\hline
				Test instance             & Solver & Fval & Iter & Obj\_eval& Grad & Feas & CPU time (s) \\ \hline
				\multirow{8}{*}{$s = 10$} 
				& CG            & -1.67e+00 & 1307 & 2342 & 2.38e-06 & 1.34e-07 & 12.34 \\
				& BFGS   & - & - & - & - & - & $>1200$ \\
				& L-BFGS-B      & -1.67e+00 & 935 & 989 & 2.92e-06 & 2.30e-07 & 6.11 \\
				& TNC           & -1.67e+00 & 1337 & 9024 & 7.78e-06 & 1.11e-06 & 30.86 \\
				& Newton-CG     & -1.67e+00 & 198 & 319 & 8.55e-06 & 3.04e-07 & 23.80 \\
				& Trust-krylov  & -1.67e+00 & 141 & 142 & 7.82e-07 & 1.71e-07 & 8.91 \\
				& Trust-ncg     & -1.67e+00 & 194 & 195 & 7.71e-06 & 9.66e-07 & 11.01 \\
				& Trust-exact   & - & - & - & - & - & $>1200$ \\ \hline
				\multirow{8}{*}{$s = 20$} 
				& CG            & -3.29e+00 & 709 & 1278 & 3.17e-06 & 2.77e-07 & 10.81 \\
				& BFGS   & - & - & - & - & - & $>1200$ \\
				& L-BFGS-B      & -3.29e+00 & 671 & 715 & 3.80e-06 & 2.21e-07 & 8.09 \\
				& TNC           & -3.29e+00 & 100 & 1241 & 8.25e-06 & 1.50e-06 & 7.94 \\
				& Newton-CG     & -3.29e+00 & 106 & 155 & 3.68e-08 & 4.29e-09 & 16.24 \\
				& Trust-krylov  & -3.29e+00 & 105 & 106 & 8.19e-08 & 1.75e-08 & 15.32 \\
				& Trust-ncg     & -3.29e+00 & 141 & 142 & 7.72e-06 & 1.36e-06 & 9.43 \\
				& Trust-exact   & - & - & - & - & - & $>1200$ \\ \hline
				\multirow{8}{*}{$s = 50$} 
				& CG            & -7.88e+00 & 888 & 1600 & 3.61e-06 & 2.50e-07 & 39.64 \\
				& BFGS   & - & - & - & - & - & $>1200$ \\
				& L-BFGS-B      & -7.88e+00 & 1034 & 1088 & 4.58e-06 & 5.04e-07 & 34.57 \\
				& TNC           & -7.88e+00 & 140 & 1995 & 8.85e-06 & 1.59e-06 & 54.80 \\
				& Newton-CG     & -7.88e+00 & 194 & 298 & 3.50e-05 & 1.19e-06 & 183.26 \\
				& Trust-krylov  & -7.88e+00 & 138 & 139 & 4.22e-06 & 9.28e-07 & 91.45 \\
				& Trust-ncg     & -7.88e+00 & 177 & 178 & 1.66e-06 & 1.41e-07 & 53.52 \\
				& Trust-exact   & - & - & - & - & - & $>1200$ \\ \hline
				\multirow{8}{*}{$m = 100$} 
				& CG            & -1.49e+01 & 702 & 1259 & 5.60e-06 & 5.32e-07 & 65.21 \\
				& BFGS   & - & - & - & - & - & $>1200$ \\
				& L-BFGS-B      & -1.49e+01 & 730 & 771 & 8.74e-06 & 6.26e-07 & 52.89 \\
				& TNC           & -1.49e+01 & 106 & 1430 & 5.50e-06 & 1.01e-06 & 83.95 \\
				& Newton-CG     & -1.49e+01 & 120 & 186 & 1.35e-05 & 1.71e-06 & 136.75 \\
				& Trust-krylov  & -1.49e+01 & 116 & 117 & 6.22e-06 & 1.38e-06 & 131.52 \\
				& Trust-ncg     & -1.49e+01 & 143 & 144 & 1.19e-07 & 2.52e-08 & 91.91 \\
				& Trust-exact   & - & - & - & - & - & $>1200$ \\ \hline
			\end{tabular}

		\end{table}

		Finally, we remark that while fixing the penalty parameter $\beta=2$
		in the above numerical experiments is sufficient for 
		the corresponding CDF to be an exact penalty function, for other application examples, we may need to 
		dynamically increase the penalty parameter in order to 
		make the corresponding CDF $h(\cdot)$ an exact penalty function, or choose the penalty parameter as suggested in Remark \ref{Rmk_esti_beta}. We leave the strategy to adjust $\beta$ 
		for future investigation.

		\subsection{Comparison with Riemannian optimization approaches}
			
			In this section, we test the numerical performance of our proposed constraint dissolving approaches and compare them with the state-of-the-art Riemannian optimization solvers from the PyManopt package  (version 2.0.0) \cite{townsend2016pymanopt}, which is the python version of the well-recognized optimization package Manopt \cite{boumal2014manopt}. 
			Our test example is the problem of finding the nearest low-rank correlation matrix (NCM) to a given matrix $G \in \bb{R}^{m\times m}$, which can be reformulated as the following optimization problem over the oblique manifold,
			\begin{equation}
				\label{Example_NCM}
				\begin{aligned}
					\min_{X \in \bb{R}^{m\times s}} \quad & f(X) = \frac{1}{2} \norm{ H\circ(XX\tp - G)}_F^2\\
					\text{s.t.} \quad & \mathrm{Diag}(XX\tp) = I_m.
				\end{aligned}
			\end{equation}
			Here $H \in \bb{R}^{m\times m}$ is a  weight matrix with nonnegative entries. For all the numerical experiments in this subsection, we generate the matrix $\hat{G}$ from the gene expression data provided in \cite{li2010inexact}. Then the matrix $G$ is generated from perturbing $\hat{G}$  by $G = (1-\theta)\hat{G} + \theta E$, where $\theta \geq 0$ is a prefixed parameter, and $E \in \bb{R}^{m\times m}$ is a randomly generated matrix with all of its diagonal entries equal to $1$. The weight matrix $H$ in \eqref{Example_NCM} is chosen as a symmetric matrix whose entries are uniformly distributed in $[0, 1]$.  
			
			Based on the numerical experiments in Section \ref{Section_Numerical_symp}, we choose the L-BFGS-B, CG and Trust-ncg solvers from the SciPy package. Moreover, we choose the Riemannian conjugate gradient method (RCG) \cite{boumal2015low}  and Riemannian trust-region method (RTR) \cite{absil2007trust,Absil2009optimization} from the PyManopt package. 
			We stop all the compared solvers once the norm of its Riemannian gradient is smaller than $10^{-5}$, or the maximum number of iterations exceeds $10000$. All the other parameters are fixed as their default values. All the solvers start from the same initial point, which is randomly generated on the oblique manifold in each test instance. Moreover, in all the test instances, the gradients and Hessians of the objective function $f$ are automatically computed through the automatic differentiation algorithm from PyTorch.
			Furthermore, in our proposed constraint dissolving approaches, we choose the constraint dissolving mapping $\A$ as suggested in Table \ref{Table_implementation}, while the penalty parameter $\beta$ is chosen as suggested in Remark \ref{Rmk_esti_beta}, where we fix $N_{\beta} = 20$, $\theta_{\beta} = 2.5$, $\delta_{\beta} = 1$, $\varepsilon_{\beta} = 10^{-10}$ and choose $\tilde{x}$ as the initial point of the algorithm in each test instance.
			
			Table \ref{Table_NCM} exhibits the numerical results for solving the nearest correlation matrix problem by  our proposed constraint dissolving approaches and the Riemannian optimization solvers from PyManopt.  From Table \ref{Table_NCM}, we can conclude that the CG solver from the SciPy package achieves comparable performance with the RCG solver provided by the PyManopt package. Moreover, when the column size $s$ of our test problems is relatively large, the Trust-ncg solver shows superior performance over the RTR solver. Furthermore, benefited from the highly efficient L-BFGS-B solver that is programmed in FORTRAN and wrapped by the SciPy package, our proposed constraint dissolving approach gains significant advantages against the compared Riemannian solvers in almost all the test instances. Therefore, we can conclude that our proposed constraint dissolving approaches  can achieve comparable efficiency as the state-of-the-art Riemannian optimization solvers. More importantly, solving \ref{Prob_Ori} through our proposed constraint dissolving approaches can benefit from the advanced features of existing unconstrained optimization solvers (e.g., the wrapper for FORTRAN/C solvers), and achieve higher efficiency than existing Riemannian optimization solvers. 
			
		\begin{table}[htbp]
			\centering
			\tiny
			\caption{Numerical results of the NCM problem \eqref{Example_NCM} on Arabidopsis and Leukemia dataset. }
			\label{Table_NCM}
			\begin{tabular}{l|l|ccccc|ccccc}
				\hline
				\multicolumn{2}{c|}{\multirow{2}{*}{}} & \multicolumn{5}{c|}{Arabidopsis}          & \multicolumn{5}{c}{Leukemia}          \\
				\multicolumn{2}{c|}{}                  & Fval & Iter & Grad & Feas & CPU time (s) & Fval & Iter & Grad & Feas & CPU time (s) \\ \hline
				\multirow{5}{*}{$s = 5$}   
				& CG & 7.67e+03  & 259   & 7.31e-06     & 5.29e-15     & 1.72 & 1.86e+04  & 274   & 3.83e-05     & 6.50e-15     & 2.96 \\
				& L-BFGS-B & 7.67e+03  & 322   & 8.23e-06     & 5.16e-15     & 1.48 & 1.86e+04  & 157   & 7.57e-06     & 6.06e-15     & 1.24 \\
				& Trust-ncg & 7.67e+03  & 57  & 6.27e-06     & 4.85e-15     & 3.51 & 1.86e+04  & 71  & 3.93e-05     & 6.22e-15     & 20.96 \\ \cline{2-12}
				& RCG & 7.67e+03  & 389    & 6.70e-06     & 5.43e-15     & 1.74 & 1.86e+04  & 261    & 9.05e-06     & 6.55e-15     & 2.08\\ 
				& RTR & 7.67e+03  & 55    & 3.90e-06     & 5.10e-15     & 3.29 & 1.86e+04  & 75    & 9.30e-06     & 6.38e-15     & 21.21 \\ \hline
				\multirow{5}{*}{$s = 10$}   
				& CG & 2.17e+03  & 216   & 9.97e-06     & 1.63e-14     & 1.35 & 6.38e+03  & 306   & 4.93e-06     & 6.27e-15     & 44.36 \\
				& L-BFGS-B & 2.17e+03  & 239   & 9.51e-06     & 5.02e-15     & 1.09 & 6.38e+03  & 216   & 9.94e-06     & 5.62e-15     & 22.12 \\
				& Trust-ncg & 2.16e+03  & 65  & 2.56e-07     & 5.12e-15     & 5.52 & 6.38e+03  & 86  & 1.41e-07     & 6.07e-15     & 68.99 \\\cline{2-12}
				& RCG & 2.16e+03  & 272    & 8.98e-06     & 5.82e-15     & 1.17 & 6.38e+03  & 382    & 9.87e-06     & 7.02e-15     & 38.99\\
				& RTR & 2.16e+03  & 71    & 9.01e-06     & 5.73e-15     & 8.60 & 6.38e+03  & 58    & 9.23e-06     & 6.78e-15     & 92.62 \\ \hline
				\multirow{5}{*}{$s = 15$}   
				& CG & 1.11e+03  & 375   & 4.33e-06     & 4.76e-15     & 53.37  & 3.44e+03  & 302   & 9.09e-06     & 5.67e-15     & 44.60 \\
				& L-BFGS-B & 1.11e+03  & 245   & 8.70e-06     & 4.49e-15     & 25.12 & 3.44e+03  & 234   & 9.29e-06     & 6.15e-15     & 24.56 \\
				& Trust-ncg & 1.11e+03  & 65  & 3.19e-06     & 4.43e-15     & 69.94  & 3.44e+03  & 80  & 1.14e-06     & 5.78e-15     & 66.31 \\\cline{2-12}
				& RCG & 1.11e+03  & 647    & 7.71e-06     & 6.30e-15     & 63.58  & 3.44e+03  & 442    & 8.65e-06     & 7.17e-15     & 45.27\\
				& RTR & 1.11e+03  & 52    & 9.23e-07     & 6.03e-15     & 99.15 & 3.44e+03  & 55    & 9.34e-07     & 7.34e-15     & 120.35 \\ \hline
				\multirow{5}{*}{$s = 20$}   
				& CG & 7.26e+02  & 498   & 9.75e-06     & 4.02e-15     & 69.60  & 2.23e+03  & 515   & 5.72e-06     & 1.15e-14     & 65.19 \\
				& L-BFGS-B & 7.26e+02  & 437   & 7.80e-06     & 5.52e-15     & 44.96 & 2.23e+03  & 262   & 9.21e-06     & 6.60e-15     & 27.55 \\
				& Trust-ncg & 7.26e+02  & 70  & 4.94e-06     & 4.46e-15     & 56.95  & 2.23e+03  & 90  & 1.43e-07     & 6.53e-15     & 97.37 \\\cline{2-12}
				& RCG & 7.26e+02  & 854    & 5.67e-06     & 6.06e-15     & 84.50  & 2.23e+03  & 663    & 8.26e-06     & 7.14e-15     & 68.19\\
				& RTR & 7.26e+02  & 98    & 6.88e-06     & 6.10e-15     & 103.80 & 2.23e+03  & 47    & 9.71e-06     & 7.61e-15     & 131.94 \\ \hline
				\multirow{5}{*}{$s = 25$}   
				& CG & 5.44e+02  & 620   & 9.40e-06     & 5.57e-15     & 88.79  & 1.61e+03  & 431   & 9.30e-06     & 1.01e-13     & 61.72 \\
				& L-BFGS-B & 5.44e+02  & 380   & 9.51e-06     & 5.47e-15     & 38.84 & 1.61e+03  & 240   & 9.03e-06     & 4.73e-15     & 25.75 \\
				& Trust-ncg & 5.44e+02  & 79  & 9.13e-07     & 4.08e-15     & 100.85  & 1.61e+03  & 67  & 8.96e-06     & 5.63e-15     & 67.89 \\\cline{2-12}
				& RCG & 5.44e+02  & 1307    & 9.90e-06     & 6.02e-15     & 129.46  & 1.61e+03  & 560    & 9.53e-06     & 7.34e-15     & 57.77\\
				& RTR & 5.44e+02  & 57    & 4.31e-07     & 5.99e-15     & 176.39 & 1.61e+03  & 47    & 4.29e-06     & 7.25e-15     & 151.39 \\ \hline
			\end{tabular}
			
		\end{table}

		\bibliographystyle{plain}
		\bibliography{ref}
	
\end{document}